\documentclass[11pt,reqno]{amsart}

\usepackage{xcolor}
\usepackage[utf8]{inputenc}
\usepackage[T1]{fontenc}
\usepackage[english]{babel}

\usepackage{microtype}
\usepackage{amsmath,amssymb,amsthm}
\usepackage{amsthm, thmtools, thm-restate}
\usepackage{mathtools} 
\usepackage{bm}        
\usepackage{enumitem}

\usepackage{graphicx}  
\usepackage{caption}
\usepackage{subcaption}
\usepackage{tikz}      
\usepackage{mathrsfs}  
\usepackage{booktabs}  
\usepackage{geometry}
\usepackage{cancel}

\theoremstyle{plain}
\newtheorem{theorem}{Theorem}[section]      
\newtheorem{lemma}[theorem]{Lemma}          
\newtheorem{proposition}[theorem]{Proposition}
\newtheorem{corollary}[theorem]{Corollary}

\newtheorem*{theorem*}{Theorem}             

\theoremstyle{definition}
\newtheorem{definition}[theorem]{Definition}
\newtheorem{example}[theorem]{Example}

\newtheorem{remark}[theorem]{Remark}
\usepackage[hypertexnames=false]{hyperref} 


\numberwithin{equation}{section}

\newcommand{\R}{\mathbb{R}}
\newcommand{\N}{\mathbb{N}}

\usepackage{comment}

\title[Minkowski mean curvature equation: The Neumann Problem]{Positive Solutions for an Indefinite-Weight Minkowski Mean Curvature Neumann Problem: Multiplicity and Asymptotic Behaviour}

\author[R. Ziegele]{Ricardo Ziegele}
\address{Ricardo Ziegele \newline \indent 
Dipartimento di Matematica
\newline\indent
Universit\`a di Torino
\newline\indent
via Carlo Alberto 10, 10123 Torino, Italy}
\email{ricardoalfonso.ziegelealiaga@unito.it}

\begin{document}
\keywords{Mean curvature operator in Minkowski space, Neumann boundary condition, nonsmooth critical point theory, mountain-pass theorem, limit profile}
\subjclass[2020]{35J20, 49J52, 35A24}

\maketitle

\begin{abstract}
We study the Neumann boundary value problem
$$
-\operatorname{div}\left(\frac{\nabla u}{\sqrt{1-|\nabla u|^2}}\right)
= \lambda a(x)g(u) \quad \text{in } \Omega, \qquad
\frac{\nabla u}{\sqrt{1-|\nabla u|^2}}\cdot \mathbf n = 0 \quad \text{on } \partial\Omega,
$$
where $\Omega \subset \mathbb R^N$ is a bounded convex domain, $a\in L^\infty(\Omega)$ is an indefinite weight with $\int_\Omega a < 0$, and $g$ is a nonlinearity. Under suitable assumptions on $g$, we prove the existence of two positive solutions for sufficiently large $\lambda>0$, using Szulkin's theory for nonsmooth functionals: a global minimizer $u_\lambda^{(l)}$ with negative energy, and a mountain-pass critical point $u_\lambda^{(s)}$ with positive energy. Furthermore, we study the asymptotic behaviour of both solutions as $\lambda \to +\infty$ in the model case $g(u) = |u|^{p-2} u$. We show that the mountain-pass energy level decays at the explicit rate $c_\lambda = O(\lambda^{-2/(p-2)})$, and that $u_\lambda^{(s)} \to 0$ in $C(\overline\Omega)$; moreover, we prove that $u_\lambda^{(l)} \to u_\infty$ uniformly, where $u_\infty$ solves a constrained maximization problem. The limiting profile $u_\infty$ saturates the geometric constraint, $\|\nabla u_\infty\|_{L^\infty(\Omega)}=1$, and on every connected open set where the gradient constraint is inactive and $a\neq 0$, the function $u_\infty$ is constant.
\end{abstract}

\tableofcontents

\section{Introduction}

The problem of interest is

\begin{equation}
\label{eq-general}
    \begin{cases}
        \begin{aligned}
            &-\operatorname{div}\left(\frac{\nabla u}{\sqrt{1-|\nabla u|^2}} \right) = \lambda a(x) g(u) \quad &&\text{in } \Omega, \\
            &\frac{\nabla u}{\sqrt{1-|\nabla u|^2}} \cdot \mathbf{ n} = 0 &&\text{on } \partial\Omega, 
        \end{aligned}
    \end{cases}
\end{equation}
where $\Omega \subset \mathbb{R}^N$ is a bounded, convex domain, $N \geq 2$, $\partial \Omega \in C^2$, $\mathbf{ n}$ is the exterior normal vector,  $a : \Omega \to \R$ is an indefinite $L^\infty(\Omega)$ weight,  and $g: \R \to \R$ is a nonlinear term. 

The differential operator appearing in \eqref{eq-general} arises naturally in areas such as differential geometry and general relativity, since it can be meant as a prescribed mean curvature operator for $N$-dimensional spacelike Cartesian hypersurfaces in the $(N+1)$-dimensional Lorentz-Minkowski space. It also appears in models coming from mathematical physics, such as the Born--Infeld theory in nonlinear electrodynamics. As a general, non-exhaustive overview on the matter, we refer to the seminal works \cite{BaSi82,ChYa76,Ge83}, as well as to some recent contributions \cite{BoCoFo19,BoIa19,ByIkMaMa24,MaMa25,Ma26} for general results, especially in the direction of regularity issues for equations involving this operator.

From a nonlinear analysis perspective, problems involving this differential operator have been object of great interest during the last few decades, specifically with a Dirichlet boundary condition, for which the regularity results established in \cite{BaSi82} ensure the non-degeneracy of the operator. We refer to \cite{Az14,Az16,BeJeMa14,BeJeTo13,CoObOmRi13,CoObOmRi13b,Da16,Da17,DaWa17} and the references therein for the study of existence of solutions of the Dirichlet problem, using topological methods, variational methods, and bifurcation theory.

In what concerns equations such as \eqref{eq-general} with a Neumann boundary condition, the literature is considerably more limited. To the best of our knowledge, existing results are confined to the radial setting, where the problem reduces to a second-order ODE and can be treated by means of shooting methods or phase-plane techniques (see, for instance, \cite{BoFe20,BoFe202,BoCoNo20,BoFeZa16,BoFeZa23}). The general PDE case, though, has remained essentially untouched.

The main obstruction lies in the lack of regularity results guaranteeing that $|\nabla u|$ stays strictly below $1$ up to the boundary. More specifically, the natural variational framework for this problem is defined in the closed convex set
$$K:= \{ u \in W^{1,\infty}(\Omega) \, : \, \| \nabla u \|_{L^{\infty}(\Omega)} \leq 1 \},$$
whose elements are usually referred to in the literature as
\emph{weakly spacelike}. In particular, critical points of the associated
functional might, in principle, lie on the boundary of $K$, where the
operator degenerates, and therefore need not be strictly spacelike weak
solutions in the sense of Definition~\ref{def:solution}, which are the
solutions of interest in this work.

In the Dirichlet setting, this difficulty is overcome by the regularity results in \cite{BaSi82}, since the boundary datum itself provides a control mechanism on the gradient near $\partial\Omega$. For the Neumann problem, however, an analogous regularity result was not available for general domains. In the radial setting, the symmetry reduces the equation to a one-dimensional ODE, allowing one to exploit the specific structure of the resulting initial value problem with $(u'(0),u'(R))=(0,0)$ and obtain more precise information on the gradient. This approach, however, is inherently tied to radial symmetry and does not provide a corresponding instrument for general domains.

This gap has recently been closed in \cite{Ma26}, where the first gradient regularity result for the Neumann problem associated with the Minkowski mean curvature operator is established for convex domains $\Omega$. In particular, bounded weak solutions of the Neumann problem are shown to be of class $W^{2,2}(\Omega)\cap C(\overline{\Omega})$ up to the boundary, with $|\nabla u|$ uniformly bounded away from $1$ (cf. Lemma~\ref{lemma:existence}). This result opens the door, for the first time, to a variational treatment of \eqref{eq-general} with Neumann boundary conditions in the general PDE setting. Following the variational approach established in \cite{BeJeMa14} by means of Szulkin's non-smooth critical theory, it allows us to guarantee the correspondence between solutions of \eqref{eq-general} and critical points of the associated functional (cf. Theorem~\ref{thm:critical-point-equivalence}).

Motivated by this, in this paper we study \eqref{eq-general} by non-smooth variational methods, in the presence of an indefinite weight $a(x)$ and a nonlinearity $g$ with suitable growth conditions. Elliptic problems involving indefinite (sign-changing) weights have a long history in nonlinear analysis; hence, a considerable literature has been devoted to existence, multiplicity, and qualitative properties of positive solutions for semilinear elliptic equations with indefinite nonlinearities; see, among others, \cite{HeKa80,BCN95,AlTa96,AmLo98} and the references therein.

Problems with indefinite weights for the Minkowski mean curvature operator have been object of study in recent years, mainly in one-dimensional or radial settings; see, for instance, \cite{BoFe20,BoFe202,BoFeZa16,BoFeZa23}. In particular, the present work may be viewed as a PDE counterpart, on convex domains, of the results in the radial setting of \cite{BoFe202,BoFe20}.

For the purpose of presenting our contributions, we restrict our attention below to the following model problem, although most of our results apply to a much broader class of nonlinear terms.
\begin{equation}\label{eq-model}
\begin{cases}
\displaystyle -\text{div}\left( \frac{\nabla u}{\sqrt{ 1 - |\nabla u|^2}} \right) = \lambda a(x)|u|^{p-2}u \qquad &\mbox{in }{\Omega},\\
\frac{\nabla u}{\sqrt{ 1 - |\nabla u|^2}} \cdot \mathbf{n}=0 &\mbox{on }{\partial\Omega},
\end{cases}
\end{equation}
where $p > 2$ and $a \in L^\infty(\Omega)$ is sign-changing, satisfies $\int_\Omega a(x)\,dx<0$, $\operatorname{int}\{a>0\}\neq\emptyset$, and $|\{a=0\}|=0$.

\begin{theorem}\label{thm:intro}
Let $p>2$ and assume that $a\in L^\infty(\Omega)$, $\int_\Omega a(x),dx<0$, $\operatorname{int}\{a>0\}\neq\emptyset$, and $|\{a=0\}|=0$. The following holds:
\begin{itemize}
\item \textbf{Existence and multiplicity.} There exists $\Lambda>0$ such that, for every $\lambda>\Lambda$, problem \eqref{eq-model} has at least two positive solutions, $u_\lambda^{(s)}$ and $u_{\lambda}^{(l)}$.
\item \textbf{Asymptotics.} As $\lambda\to+\infty$,

$$
u_\lambda^{(s)} \to 0 \quad \text{and} \quad u_{\lambda}^{(l)} \to u_\infty \quad \text{uniformly in } C(\overline{\Omega}),
$$

where $u_\infty$ is a continuous, nonnegative function satisfying $\|\nabla u_\infty\|_{L^\infty(\Omega)}=1$. Moreover, $u_\infty$ is locally constant on every connected open subset $U \subset \Omega$ such that $\| \nabla u_\infty \|_{L^{\infty}(U)}<1$ and $a \not= 0$ a.e. in $U$, taking only the values $0$ and a positive constant $C$, independent of $U$.
\end{itemize}
\end{theorem}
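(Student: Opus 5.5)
The plan is to work with Szulkin's framework on the convex set $K$. We consider the functional
$$I_\lambda(u)=\int_\Omega\bigl(1-\sqrt{1-|\nabla u|^2}\bigr)\,dx-\lambda\int_\Omega a(x)\frac{(u^+)^p}{p}\,dx,$$
set to $+\infty$ outside $K$, and use Theorem~\ref{thm:critical-point-equivalence} to identify its Szulkin critical points with strictly spacelike solutions of \eqref{eq-model}. The regularity result quoted in Lemma~\ref{lemma:existence} gives $|\nabla u|\le 1-\delta$. Positivity then follows by testing with $u^-$, which gives $u\ge0$, and applying a strong maximum principle for the operator; with $|\nabla u|$ bounded away from $1$ the operator is uniformly elliptic.

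For the global minimizer, we first control the range of $K$. Since $\Omega$ is bounded, every $u\in K$ satisfies $\operatorname{osc}u\le\operatorname{diam}\Omega$. If $\min u$ is large, then $\int a\,u^p\approx(\min u)^p\int a<0$, so the energy is positive there. This makes $I_\lambda$ coercive in the $W^{1,\infty}$ sense on $K$, and minimizers exist by Arzel\`a--Ascoli compactness. To make the infimum negative for large $\lambda$, take $\varphi\in K$ supported near a ball in $\operatorname{int}\{a>0\}$ with $\int a\varphi^p>0$. Then $I_\lambda(\varphi)<0$ once $\lambda>\Lambda$, and this yields $u_\lambda^{(l)}\neq0$.

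For the second solution we use the mountain pass. We have $0$ as a strict local minimizer, since for $u=c+w$ with small $c$ and $w$ the negativity of $\int a$ together with the Sobolev/Poincar\'e control from the first term gives $I_\lambda\ge\rho>0$ on a small $C(\overline\Omega)$ sphere. One has to use $p>2$ and the splitting into mean and oscillation. Compactness holds because $K$ is compact in $C(\overline\Omega)$, so Palais--Smale in Szulkin's sense follows. The mountain pass theorem gives $u_\lambda^{(s)}$ with $I_\lambda(u_\lambda^{(s)})=c_\lambda>0$, distinct from $u^{(l)}_\lambda$ because the energies differ.

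For the asymptotics, we scale the test path: $t\mapsto t\lambda^{-1/(p-2)}\psi$. Since $1-\sqrt{1-s^2}\sim s^2/2$, this gives $c_\lambda\le C\lambda^{-2/(p-2)}$. Combined with the equation tested by $u$, this forces $\|u_\lambda^{(s)}\|_\infty\to0$: the mountain pass solutions have to shrink to keep the energy small, and uniform limits along subsequences are constants $c$ with $c^p\int a\ge0$, so $c=0$. For $u_\lambda^{(l)}$, the key observation is that $I_\lambda/\lambda\to-\frac1p\int a(u^+)^p$ uniformly on the compact set $K\cap\{\|u\|_\infty\le M\}$, with a uniform bound $M$ coming from the coercivity step. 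By $\Gamma$-convergence, limit points are maximizers $u_\infty$ of $J(u)=\int a(u^+)^p$ over $K$. If $\|\nabla u_\infty\|_\infty<1$, then $(1+\varepsilon)u_\infty\in K$ increases $J$ when $J>0$, which shows the constraint is saturated. On an open set $U$ where the constraint is inactive, local perturbations $u_\infty+t\phi$ are admissible, giving $a\,u_\infty^{p-1}=0$ a.e. in $U$; since $a\neq0$ this gives $u_\infty=0$ there. That alone would contradict the claim about a positive constant $C$, so a plateau at a positive value must come from a different mechanism: the paper presumably argues via the Euler--Lagrange inequality for the constrained problem, where $u_\infty$ being locally the maximum of $u_\infty$ is allowed, and I would try to show the plateau value equals $\max u_\infty$.

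The main obstacle is this final structural statement, namely uniqueness of limits and the $0$/$C$ dichotomy. I expect to need a careful first-variation argument with one-sided perturbations $u_\infty+t\phi$, $t>0$, that remain in $K$ only near maxima or zero sets.
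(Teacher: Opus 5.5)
Your ground-state construction is fine: the negative energy level keeps minimizing sequences away from the bad constants. The mountain-pass part, however, fails as you set it up. With the truncated primitive $(u^+)^p/p$, every nonpositive constant $u\equiv -c$ lies in $K$ and has $I_\lambda(-c)=0$. It is even a Szulkin critical point, because $\Phi_\lambda'(-c)=0$ and $\Psi(v)\ge 0=\Psi(-c)$. So every sphere $\|u\|_{L^\infty(\Omega)}=\rho$ contains points of zero energy, and no $\alpha>0$ as required in Theorem~\ref{thm:szulkin-mountain-pass} exists. The sign of $\int_\Omega a$ controls the mean $\bar u$ only when $\bar u>0$. The constants $u_n\equiv -n$ also form a Palais--Smale sequence with no convergent subsequence. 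Your justification that $K$ is compact in $C(\overline\Omega)$ is false, since $K$ contains all constants (Remark~\ref{rmk:K-noncompact}); compactness has to come from a coercivity estimate such as Lemma~\ref{boundness}.

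The paper avoids all this by keeping the even primitive $G(s)=|s|^p/p$. Then $-\lambda G(\bar u)\int_\Omega a>0$ for every $\bar u\neq 0$, which is what yields \eqref{eq:case1}, and $I_\lambda$ is coercive in both directions. Positivity of the mountain-pass solution is obtained not by truncation but through the retraction principle, Proposition~\ref{prop:abstract-positivity}, with $p(u)=|u|$. This map is nonexpansive and idempotent, satisfies $I_\lambda(|u|)=I_\lambda(u)$, and fixes $0$ and $e=u_\lambda^{(l)}>0$; Harnack then finishes. Note also that testing with $u^-$ only gives $\nabla u^-=0$, i.e.\ $u\ge 0$ or $u$ is a negative constant; you must exclude the latter through the energy level.

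Your argument for $u_\lambda^{(s)}\to 0$ is missing its key step. Smallness of $c_\lambda$ gives no control on $\nabla u_\lambda^{(s)}$. Testing with $u$ gives $c_\lambda=\int_\Omega\bigl(1-\sqrt{1-|\nabla u|^2}-\frac{|\nabla u|^2}{p\sqrt{1-|\nabla u|^2}}\bigr)\,dx$, whose integrand becomes negative as $|\nabla u|\to 1$. So nothing forces subsequential limits to be constant. The paper uses only $c_\lambda>0$ and $\Psi\le|\Omega|$ to get a $\lambda$-uniform $L^1$ bound on $|\nabla u|^2/\sqrt{1-|\nabla u|^2}$, hence on the flux. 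Dividing the weak formulation by $\lambda$ gives $\bigl|\int_\Omega a\,g(u_\lambda^{(s)})\phi\,dx\bigr|\le C\|\nabla\phi\|_{L^\infty(\Omega)}/\lambda$. Any limit exists by compactness of $K_a$ (Lemma~\ref{lemma:compactness}) and satisfies $a\,g(u_\infty^{(s)})=0$ a.e.; then $|\{a=0\}|=0$ gives $u_\infty^{(s)}\equiv 0$. The decay rate of $c_\lambda$ is not needed for this.

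By contrast, the step you flag as the main obstacle is already settled by your own argument. Since $u_\lambda^{(l)}$ minimizes $I_\lambda$ over all of $K$ and $0\le\Psi\le|\Omega|$, you get $\int_\Omega a\,G(v)\le\int_\Omega a\,G(u_\lambda^{(l)})+|\Omega|/\lambda$ for every $v\in K$. Hence $u_\infty$ maximizes over $K$, not only over $K_a$. Your two-sided variations $u_\infty+t\phi$ with $\phi\in C_c^\infty(U)$ are therefore admissible, and they give $u_\infty\equiv 0$ on every such $U$. This does not contradict the statement: it places the values in $\{0\}\subset\{0,C\}$. It is stronger than the Lagrange-multiplier dichotomy of Proposition~\ref{prop:inactive-region}, which uses only maximality on $K_a$, and it shows the alternative $u_\infty\equiv\frac{p-1}{p}\mu>0$ never occurs. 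No one-sided or plateau mechanism is needed. Also, the paper itself proves convergence of $u_\lambda^{(l)}$ only along subsequences, so you should not expect to settle uniqueness of the limit.
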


Here, the superscripts $(s)$ and $(l)$ stand for \emph{small} and \emph{large}, respectively, referring to the different asymptotic behaviour of the two families of solutions as $\lambda\to+\infty$.

Let us comment on our results.

The existence and multiplicity part of Theorem~\ref{thm:intro} extends to the general, non-radial PDE setting with Neumann boundary conditions the non-smooth variational framework developed in \cite{BeJeMa14} for the Dirichlet problem, here in the presence of a nonlinearity with an indefinite weight. More precisely, once $\lambda$ exceeds a threshold value $\Lambda$, we obtain a positive global minimizer $u_\lambda^{(l)}$ of negative energy, which we will be referring to, as usual in the literature, as a \emph{ground state}, together with a positive mountain-pass critical point $u_\lambda^{(s)}$ of positive energy (see Theorems~\ref{thm:GS} and~\ref{thm:MP}). The positivity of the latter requires an additional argument in the non-smooth setting. To this end, we establish an abstract retraction principle for mountain-pass critical points of functionals of Szulkin-type, showing that the mountain-pass level can be attained within the range of a suitable energy-decreasing nonexpansive retraction $p$ (see Proposition~\ref{prop:abstract-positivity}). The positivity result then follows by applying this principle to the modulus map $p(u) = |u|$.

The most distinctive aspect of the present work concerns the asymptotic behaviour as $\lambda\to+\infty$. We first show that the ground states $u_\lambda^{(l)}$ converge uniformly to a limiting profile $u_\infty$, characterized as a solution of a constrained limiting variational problem (see Proposition~\ref{prop:asGS}), by adapting the arguments developed in \cite{BoCoZi26}. In contrast, the mountain-pass solutions $u_\lambda^{(s)}$ converge uniformly to $0$ (see Proposition~\ref{prop:MP-asymptotics}). A key ingredient in this analysis is the gradient constraint
$$
\|\nabla u\|_{L^\infty(\Omega)}\leq1,
$$
which plays a central role in the compactness argument underlying these limits (see Lemma~\ref{lemma:compactness}).

The main difference between the limiting profile $u_\infty$
obtained here and that arising in the Dirichlet case is that, in the latter,
the boundary condition allows for an explicit characterization of the limit
profile, under a monotonicity assumption on the nonlinearity, namely in terms
of the function $\operatorname{dist}(\cdot,\partial\Omega)$
(cf. ~\cite[Corollary 2.19]{BoCoZi26}). In the present Neumann setting, such an
explicit characterization is no longer available: the indefinite weight,
combined with the Neumann boundary condition, prevents this type of argument.
For this reason, in order to obtain information on $u_\infty$, we are led to
study directly the variational problem that characterizes the limiting
profile.

The limiting profile $u_\infty$ exhibits a further geometric property, associated to the structure of the limiting variational problem: the gradient constraint is saturated, i.e.,
$$
\|\nabla u_\infty\|_{L^\infty(\Omega)}=1
$$
At the same time, on every connected open set on which $a\neq0$ a.e. and the constraint is locally inactive, that is, where $|\nabla u_\infty|<1$ uniformly, the function $u_\infty$ is necessarily constant. More precisely, it can take only the values $0$ and $C$, where $C>0$ is independent of the chosen connected component (see Proposition~\ref{prop:inactive-region} and Corollary~\ref{cor:limitprofile-plateaus}). Thus, uniformly away from the contact set
$$
\Lambda_0(u_\infty):=\{x\in\Omega:|\nabla u_\infty(x)|=1\},
$$
the limiting profile displays a two-level plateau structure. This suggests an interpretation of $\Lambda_0(u_\infty)$ as a free-boundary-type region separating, or connecting, the phases ${u_\infty=0}$ and ${u_\infty=C}$.

We stress, however, that Theorem~\ref{thm:intro} does not exclude the possibility that $\Lambda_0(u_\infty)$ has full measure in $\Omega\setminus\{a=0\}$. In that case, the above plateau description becomes vacuous. Determining whether this can actually occur for a limiting ground state, as well as obtaining a finer description of the size and geometry of the contact set $\Lambda_0(u_\infty)$, remains an open problem, and is expected to depend heavily on the geometry of the indefinite weight $a(x)$ (cf. Example \ref{ex:final}).

The paper is organized as follows. In Section~\ref{sec2} we set up the variational framework associated with \eqref{eq-general}: we introduce the relevant functional $I_\lambda$ on the space $K$ of $1$-Lipschitz functions, establish the equivalence between critical points of $I_\lambda$, in the sense of Szulkin, and weak solutions of \eqref{eq-general} (cf. Theorem~\ref{thm:critical-point-equivalence}), and show that $I_\lambda$ attains its infimum. In Section~\ref{sec3} we use this variational framework to prove, for $\lambda$ sufficiently large, the existence of two distinct non-trivial solutions: a ground-state solution and a mountain-pass solution, with negative and positive energy levels, respectively (cf. Theorems~\ref{thm:GS} and~\ref{thm:MP}). In Section~\ref{sec4} we study their asymptotic behaviour as $\lambda\to+\infty$ (cf. Propositions~\ref{prop:asGS} and~\ref{prop:MP-asymptotics}) and the properties of the resulting limiting profile. Finally, in Appendix~\ref{app:mountain-pass} we prove the abstract retraction principle used to obtain the positive mountain-pass solution (cf. Proposition~\ref{prop:abstract-positivity}).

\section{The variational framework}\label{sec2}
Throughout this section, we introduce the variational and analytical setting in which \eqref{eq-general} will be studied, and we establish the compactness and existence properties needed to apply critical point theory for nonsmooth (Szulkin-type) functionals.

First, over $a$ and $g$ in \eqref{eq-general} we will be assuming the following hypothesis: 
\begin{itemize}
    \item[$(a_*)$] $a \in L^\infty(\Omega)$ and $\int_\Omega a(x) dx < 0;$  
    \item[$(a_+)$] The interior of the set $\{x \in \Omega \, \colon \, a(x) > 0 \}$ is not empty;
    \item[$(g_1)$] $g \in C^1(\R)$ and $g(s) s > 0$ for every $s \not = 0$;
    \item[$(g_2)$] There exist $p>1$ and $c>0$ such that $\lim_{|s|\to+\infty}
\frac{g(s)}{|s|^{p-2}s}=c$;

    \item[$(g_3)$] $ \lim_{s \to 0} \frac{g^2(s)}{G(s)} = 0$, where $G(s) := \int_0^s g(t) \, dt$.
\end{itemize}
Clearly the model case $g(u) = |u|^{p-2}u $ with $p>2$ satisfy ($g_1$)-($g_2$)-$(g_3)$.
Let us now comment about these hypotheses.
The hypothesis $(a_*)$ has a two-fold use; on the one hand, the negative-mean property is needed to ensure the coercivity of the associated functional (cf. Lemma \ref{boundness}), and on the other hand, the boundedness of $a(x)$ is necessary to establish the equivalence between critical points and solutions (cf. Theorem \ref{thm:critical-point-equivalence}). The same negative-mean hypothesis has appeared in the past in the radial setting, which is a necessary condition to ensure the existence of solutions when $g$ is strictly increasing (see, for instance, \cite{BoFe20}). The hypothesis $(a_+)$ is a technical condition to ensure the existence of non-trivial ground-state solutions (cf. Theorem \ref{thm:GS}). Hypothesis $(g_1)$ is a sign condition, ensuring that $G$ is positive and increasing for $s \geq 0$; hypotheses $(g_2)$ and $(g_3)$ impose, respectively, growth conditions at infinity and at the origin. Finally, we remark that, under $(g_1)$, condition $(g_3)$ is equivalent to 
$$g'(0)=0.$$


\subsection{Preliminaries}

Following the approach in \cite{BeJeMa14}, adapted to the results obtained in \cite{Ma26}, in this paper we restrict our analysis to the so-called \emph{weak strictly spacelike solutions} (we refer to \cite{BaSi82} for this terminology). We start by making precise the notion of solution of \eqref{eq-general} that we work with throughout the paper.

\begin{definition}\label{def:solution}
A function $u$ is a \emph{(weak) solution} of \eqref{eq-general} if $u \in W^{1,\infty}(\Omega)$,  $\|\nabla u\|_{L^\infty(\Omega)} < 1$, and
$$
\int_\Omega \frac{\nabla u \cdot \nabla \phi}{\sqrt{1-|\nabla u|^2}} \, dx = \lambda \int_\Omega a(x) g(u) \phi \, dx \qquad \text{for every } \phi \in C^1(\overline\Omega).
$$
\end{definition}

As we will see below (cf.\ Remark~\ref{remark:density}), since $\|\nabla u\|_{L^\infty(\Omega)}<1$ for a solution $u$ in the sense of Definition~\ref{def:solution}, both sides of the identity above extend continuously to test functions $\phi \in W^{1,2}(\Omega)$, by density of $C^1(\overline\Omega)$ in $W^{1,2}(\Omega)$; we will freely use this extended formulation whenever convenient.

To study \eqref{eq-general} variationally, following the approach of \cite{BeJeMa14}, we work within the closed convex set of $1$-Lipschitz functions
$$K := \{ u \in W^{1,\infty}(\Omega) \, \colon \, \|\nabla u \|_{L^\infty(\Omega)} \leq 1 \} \subset C(\overline\Omega),$$
and we introduce the nonsmooth functional $I_\lambda \colon C(\overline\Omega) \to \R$,
\begin{equation}
    \label{functional}
    I_\lambda(u) = \Psi(u) + \Phi_\lambda(u),
\end{equation}
where
$$\Psi(u) := 
\begin{cases}
        \displaystyle \int_\Omega \left( 1- \sqrt{1-|\nabla u|^2} \right) dx \quad &\text{if } u \in K, \smallskip \\
        \displaystyle 
        + \infty &\text{if } u \in C(\overline{\Omega}) \setminus K,
\end{cases}
    $$ 
and 
$$\Phi_\lambda(u) := - \lambda \int_\Omega a(x)G(u) dx,$$
for all $u\in C(\overline{\Omega})$. As in \cite{BeJeMa14,Sz86}, the functional $I_\lambda=\Psi+\Phi$ has the structure required by Szulkin's critical point theory, namely: 
$$
\begin{aligned}  
&I_{\lambda} = \Psi + \Phi_{\lambda} \text{ is defined over a real Banach space (namely $(C(\overline{\Omega}),\|\cdot\|_{L^\infty(\Omega)})$)},\\   
&\Psi : C(\overline{\Omega}) \to (-\infty, +\infty] \text{ is convex, proper (i.e., $\Psi \not\equiv +\infty$), and lower semicontinuous},\\
&\Phi_{\lambda} \in C^1(C(\overline{\Omega});\mathbb{R}), 
\end{aligned} 
$$
see \cite[Lemma 2.4]{BeJeMa14} for the lower semi-continuity and convexity of $\Psi$. With the functional $I_\lambda$ at hand, we can now make precise the notions of critical point and Palais--Smale sequence that we will work with.

\begin{definition}\label{def:crit-pt}
A function $u \in K$ is called a \emph{critical point} of $I_\lambda$ if it is a solution of the variational inequality
$$
\Psi(v) - \Psi(u) + \Phi_\lambda'(u)[v-u]  \geq 0 \, \text{ for all } v \in K,
$$
namely, for all $v \in K$
$$
\int_\Omega \big( 1- \sqrt{1-|\nabla v|^2} \big)dx - \int_\Omega \big( 1- \sqrt{1-|\nabla u|^2} \big)dx- \lambda \int_\Omega a(x)g(u)(v-u)dx \geq 0. 
$$
\end{definition}
\begin{definition}\label{def:PS}
$I_\lambda$ satisfies the \emph{Palais-Smale condition} (shortened $(PS)$-condition) if any sequence $\{u_n\} \subset K$ with $I(u_n)\to c\in\mathbb R$ satisfying 
\begin{equation}
\label{ineq:PS}
\Psi(v) - \Psi(u_n) + \Phi'_\lambda(u_n)[v-u_n] \geq -\varepsilon_n\|v-u_n\|_{L^\infty(\Omega)} \, \text{ for all } v \in K, 
\end{equation}
where $\varepsilon_n \to 0^+$, has a convergent subsequence. 
\end{definition}

\begin{remark}\label{rmk:K-noncompact}
Unlike the Dirichlet setting of \cite{BeJeMa14}, where the boundary datum $u|_{\partial \Omega} = 0$ combined with the $1$-Lipschitz bound gives an a-priori
$L^{\infty}(\Omega)$-estimate (namely, $\|u\|_{L^{\infty}(\Omega)} \leq \tfrac 12 \operatorname{diam}(\Omega)$), and hence yields the compactness of the admissible set
itself, the space $K$ introduced above contains every constant function and
is therefore unbounded in $C(\overline{\Omega})$. Consequently $K$ is
not compact, and boundedness of Palais-Smale sequences cannot be inherited from the geometry of
$K$. Instead, it must be proved using a combination of the negative-mean property of the weight  $(a_\ast)$, and the growth of $g$ at infinity $(g_2)$, which together serve to ensure a coercivity-type property of the functional $I_\lambda$, cf. Lemma~\ref{boundness} below. This is the first point at which the Neumann boundary condition, which
imposes no a-priori constraint on the average of $u$, makes the
variational analysis different from the Dirichlet case; it is
also the reason a dedicated compactness argument, of a different nature,
will be required in Section~\ref{sec4} to carry out the asymptotic analysis
as $\lambda \to +\infty$ (see
Lemma~\ref{lemma:compactness}).
\end{remark}

We now show that $I_\lambda$ enjoys a coercivity-type property on $K$, which will be the key ingredient in proving that $I_\lambda$ does satisfy the $(PS)$-condition.

\begin{lemma} \label{boundness}
Assume $(a_*)$, $(g_1)$ and $(g_2)$. Let $\{u_n\}\subset K$ be a sequence such that $$\sup_n |I_\lambda(u_n)|<+\infty.$$ Then $\|u_n\|_{L^\infty(\Omega)}$ is bounded.
\end{lemma}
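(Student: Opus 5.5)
We argue by contradiction. Suppose that, along a subsequence, $\|u_n\|_{L^\infty(\Omega)}\to+\infty$. The key structural fact is that elements of $K$ are $1$-Lipschitz on the convex set $\Omega$, so their oscillation is controlled uniformly:
$$|u_n(x)-u_n(y)|\le |x-y|\le d:=\operatorname{diam}(\Omega)\qquad\text{for all } x,y\in\overline\Omega.$$
Write $u_n = c_n + w_n$ with $c_n:=u_n(x_0)$ for a fixed $x_0\in\overline\Omega$, so that $\|w_n\|_{L^\infty(\Omega)}\le d$. Then $|c_n|\to+\infty$, and passing to a further subsequence we may assume $c_n\to+\infty$ or $c_n\to-\infty$. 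Since $\Psi(u_n)\in[0,|\Omega|]$, the hypothesis $\sup_n|I_\lambda(u_n)|<\infty$ gives
$$\sup_n \Big|\lambda\int_\Omega a(x)G(u_n)\,dx\Big|<+\infty.$$
We will show instead that $-\lambda\int_\Omega aG(u_n)\to+\infty$, which is the desired contradiction.

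For the asymptotics of $G$, condition $(g_2)$ gives $G(s)=\frac{c}{p}|s|^p(1+o(1))$ as $|s|\to\infty$; in particular, $G(s)>0$ for large $|s|$ by $(g_1)$. The plan is to write
$$\frac{G(u_n(x))}{G(c_n)}\to 1 \quad\text{uniformly in } x\in\Omega.$$
This holds because $u_n(x)=c_n+O(d)$, hence $|u_n(x)|/|c_n|\to1$ uniformly and $u_n(x)$ has the same sign as $c_n$ for $n$ large. Here $p>1$ ensures $|c_n|^p\to\infty$, so the $o(1)$ terms are harmless. Consequently
$$\int_\Omega a\,G(u_n)\,dx = G(c_n)\Big(\int_\Omega a\,dx + o(1)\,\|a\|_{L^1(\Omega)}\Big),$$
and by $(a_*)$ the bracket is eventually at most $\frac12\int_\Omega a<0$. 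Since $G(c_n)\to+\infty$, we get $-\lambda\int_\Omega aG(u_n)\to+\infty$, contradicting the bound above.

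The only delicate step is the uniform ratio $G(u_n)/G(c_n)\to1$. I would handle it through the explicit limit $G(s)/|s|^p\to c/p$, obtained from $(g_2)$ by L'Hôpital or direct integration, which is valid at both $+\infty$ and $-\infty$, combined with $|u_n(x)|^p/|c_n|^p\to1$ uniformly. No Palais–Smale information is needed; only the energy bound, the Lipschitz constraint, and the negative mean of $a$ enter. Note that the oscillation bound uses convexity of $\Omega$; otherwise one would use the intrinsic diameter, which is finite for a bounded $C^2$ domain.
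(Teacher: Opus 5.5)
Your proof is correct and is essentially the paper's argument: you argue by contradiction, use the $1$-Lipschitz bound to trap $u_n$ in a band of width $\operatorname{diam}(\Omega)$ around a diverging scalar, and combine the power-type growth of $G$ from $(g_2)$ with $\int_\Omega a\,dx<0$ to force $-\lambda\int_\Omega a\,G(u_n)\,dx\to+\infty$. The difference is only in the bookkeeping: the paper anchors at a maximum point $x_n$ and uses monotonicity of $G$, the splitting into $A^+$ and $A^-$, and the estimate $G(M_n)-G(M_n-D)=o\bigl(G(M_n-D)\bigr)$, whereas you anchor at a fixed point and use the uniform ratio $G(u_n)/G(c_n)\to1$, the same device the paper uses in Lemma~\ref{lemma:compactness}.
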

\begin{proof}
Assume by contradiction that there exists a sequence $\{u_n\} \subset K$ such that $|I_\lambda(u_n)|$ is bounded and, up to a subsequence,
$$
M_n:=\|u_n\|_{L^{\infty}(\Omega)} \rightarrow +\infty
\quad\text{as } n\to+\infty.
$$
Let $x_n\in\overline{\Omega}$ be such that
\[
|u_n(x_n)|=M_n.
\]
Up to a subsequence, either $u_n(x_n)=M_n$ for every $n$, or
$u_n(x_n)=-M_n$ for every $n$. Assume first that
$u_n(x_n)=M_n$ and set $D:=\operatorname{diam}(\Omega)$. Since every function in $K$ is $1$-Lipschitz, for every $x\in\overline{\Omega}$,
\begin{equation}\label{ineq:max}
u_n(x_n)\ge u_n(x)\ge u_n(x_n)-|x-x_n|
\ge u_n(x_n)-D.
\end{equation}
It follows from \eqref{ineq:max} that
$$
u_n(x)\ge M_n-D>0,
$$
for all $x\in\overline{\Omega}$ and $n$ sufficiently large, since $M_n\to+\infty$. The case where $u_n(x_n)=-M_n$ is completely symmetric, thanks to $(g_1)$.

\medskip

Next, notice that $(g_1)$ implies that
$$
G(s)=\int_0^s g(t)\,dt,
$$
is increasing for $s > 0$ (resp. decreasing for $s\leq 0)$, and non-negative for every $s \in \R$. Therefore, by \eqref{ineq:max},
\begin{equation}\label{ineq:G}
G(M_n)\ge G(u_n(x))\ge G(M_n-D),
\qquad
\forall\,x\in\overline{\Omega}.
\end{equation}
Since $\Psi(u)\ge0$ for every $u\in K$, we have
$$
I_\lambda(u)\ge
-\lambda\int_\Omega a(x)G(u)\,dx
=
-\lambda\left(
\int_{\{a\ge0\}}a(x)G(u)\,dx
+
\int_{\{a<0\}}a(x)G(u)\,dx
\right).
$$
Using \eqref{ineq:G}, we deduce that
\begin{equation}\label{ineq:I1}
I_\lambda(u_n)\ge
-\lambda\left(
G(M_n)\int_{\{a\ge0\}}a(x)\,dx
+
G(M_n-D)\int_{\{a<0\}}a(x)\,dx
\right).
\end{equation}
Now set
\begin{equation} \label{eq:notation}
A^+:=\int_{\{a\ge0\}}a(x)\,dx,
\qquad
A^-:=-\int_{\{a<0\}}a(x)\,dx.
\end{equation}
By assumption $(a_*)$,
$$
0\le A^+<A^-.
$$
Moreover, $(g_2)$ yields
$$
G(s)\sim \frac{c}{p}|s|^p
\qquad\text{as }|s|\to+\infty.
$$
Hence,
$$
G(M_n-D)\to+\infty,
$$
and
$$
G(M_n)-G(M_n-D)
=
\int_{M_n-D}^{M_n}g(t)\,dt
=
O(M_n^{p-1})
=
o\bigl(G(M_n-D)\bigr).
$$
Combining these estimates with \eqref{ineq:I1}, we obtain
\begin{equation} \label{eq:coerc}
\begin{aligned} 
I_\lambda(u_n)
&\ge
\lambda\Bigl[
(A^- - A^+)G(M_n-D)
-
A^+\bigl(G(M_n)-G(M_n-D)\bigr)
\Bigr] \\
&=
\lambda G(M_n-D)
\left[
A^- - A^+ + o(1)
\right]
\longrightarrow+\infty,
\end{aligned}
\end{equation}
since $A^->A^+$, thus reaching a contradiction and finishing the proof.
\end{proof}

This a priori bound on Palais-Smale sequences is precisely what is needed, together with the closeness of $K$ in $C(\overline\Omega)$, to obtain the following compactness property of $I_\lambda$.

\begin{corollary}\label{cor:PS}
    $I_\lambda(u)$ defined as in \eqref{functional} satisfies the (PS)-condition.
\end{corollary}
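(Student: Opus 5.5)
Let $\{u_n\}\subset K$ be a Palais--Smale sequence as in Definition~\ref{def:PS}, so that $I_\lambda(u_n)\to c$ and \eqref{ineq:PS} holds with $\varepsilon_n\to0^+$. The plan has three steps: get an $L^\infty$ bound, extract a uniformly convergent subsequence by Arzel\`a--Ascoli, and check that the limit stays in $K$. The Palais--Smale inequality itself is not needed for compactness. It only matters if one also wants to show that the limit is a critical point, which I sketch at the end.

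First, since $I_\lambda(u_n)\to c\in\R$, the values $I_\lambda(u_n)$ are bounded. Lemma~\ref{boundness} then gives $\sup_n\|u_n\|_{L^\infty(\Omega)}<+\infty$.

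Second, I want equicontinuity. Every $u_n$ lies in $K$, so $\|\nabla u_n\|_{L^\infty(\Omega)}\le1$. Because $\Omega$ is convex, the segment joining any two points of $\Omega$ lies in $\Omega$. Integrating the gradient along that segment gives $|u_n(x)-u_n(y)|\le|x-y|$ on $\Omega$, and by continuity this extends to $\overline\Omega$. Hence $\{u_n\}$ is uniformly bounded and equi-Lipschitz on the compact set $\overline\Omega$. By Arzel\`a--Ascoli, a subsequence converges in $C(\overline\Omega)$ to some $u$. This is exactly the convergence required by the $(PS)$-condition.

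Third, I check that $u\in K$. The limit $u$ is $1$-Lipschitz, being a uniform limit of $1$-Lipschitz functions. A $1$-Lipschitz function on $\overline\Omega$ belongs to $W^{1,\infty}(\Omega)$ with $|\nabla u|\le1$ a.e. by Rademacher's theorem. Alternatively, $\nabla u_n\rightharpoonup^*\nabla u$ in $L^\infty$, and the closed unit ball is weak-$*$ closed. So $K$ is closed in $C(\overline\Omega)$ and $u\in K$.

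If one additionally wants $u$ to be a critical point, pass to the limit in \eqref{ineq:PS}. Use the weak-$*$ (equivalently, uniform) lower semicontinuity of $\Psi$ from \cite[Lemma 2.4]{BeJeMa14}. Use also the convergence $\Phi_\lambda'(u_n)[v-u_n]\to\Phi_\lambda'(u)[v-u]$, which holds because $g$ is continuous and $u_n\to u$ uniformly. Together these give $\Psi(v)-\Psi(u)+\Phi'_\lambda(u)[v-u]\ge\limsup_n\bigl(-\varepsilon_n\|v-u_n\|_{L^\infty(\Omega)}\bigr)=0$.

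The only step with real content is the uniform bound, which Lemma~\ref{boundness} already supplies; the rest is routine.
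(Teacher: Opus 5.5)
Your proposal is correct and follows the paper's proof essentially step by step: the $L^\infty$ bound from Lemma~\ref{boundness}, equicontinuity from the $1$-Lipschitz property, Arzel\`a--Ascoli, closedness of $K$, and passage to the limit in \eqref{ineq:PS} via lower semicontinuity of $\Psi$ and continuity of $\Phi_\lambda'$. Your extra details — using convexity of $\Omega$ to get the Lipschitz bound, and checking the direction of the $\limsup$ in the limiting inequality — fill in points the paper leaves implicit.
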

\begin{proof}
   Lemma~\ref{boundness} \ yields that every Palais-Smale sequence $\{u_n\}\subset K$ is bounded in $L^\infty(\Omega)$. Since every function in $K$ is $1$-Lipschitz, the sequence is equi-continuous. Hence, by the Arzelà-Ascoli theorem, up to a subsequence,
$$
u_n\to u
\quad\text{uniformly in } C(\overline\Omega).
$$
Since $K$ is closed in $C(\overline\Omega)$, we have $u\in K$. Finally, passing to the limit in inequality \eqref{ineq:PS}, using the continuity of $\Phi'$ and the lower semicontinuity of $\Psi$, we conclude that every Palais-Smale sequence admits a convergent subsequence. Therefore, $I_\lambda$ satisfies the $(PS)$-condition.
\end{proof}

In order to relate critical points of $I_\lambda$ to weak solutions of \eqref{eq-general}, we will need a solvability result for the underlying Neumann boundary condition problem for the mean curvature operator. To this end, we recall the following existence and regularity result, due to Maniscalco.

\begin{lemma}[{\cite[Theorem 1.1]{Ma26}}]
\label{lemma:existence}
If $\rho \in L^{\infty}(\Omega)$ is such that $\int_\Omega \rho(x) \, dx = 0,$ then the problem
\begin{equation}\label{eq-rho} -\operatorname{div} \left(\frac{\nabla u}{\sqrt{1-|\nabla u|^2}} \right) = \rho \quad \text{in } \Omega, \quad \frac{\nabla u}{\sqrt{1-|\nabla u|^2}}\cdot \mathbf n=0\ \text{ on }\partial\Omega,\end{equation}
has a weak solution $u_\rho \in W^{2,2}(\Omega) \cap C(\overline{\Omega})$, and it is unique up to translations. Moreover, it satisfies $\|\nabla u_\rho\|_{L^{\infty}(\Omega)} < 1- \theta$, where $\theta = \theta(N,\Omega, \|\rho\|_{L^{\infty}(\Omega)})$.
\end{lemma}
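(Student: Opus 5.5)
The plan is to construct $u_\rho$ as a limit of solutions of regularized problems. Uniformity comes from an a priori gradient bound that depends only on $N$, $\Omega$ and $\|\rho\|_{L^\infty(\Omega)}$. Uniqueness will follow from the strict monotonicity of the vector field $\mathbf a(\xi):=\xi/\sqrt{1-|\xi|^2}$.

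\emph{Variational existence of a weakly spacelike candidate.} Consider
$$J(u)=\int_\Omega\bigl(1-\sqrt{1-|\nabla u|^2}\bigr)\,dx-\int_\Omega \rho u\,dx$$
on $K_0:=\{u\in K:\int_\Omega u=0\}$. Since $\int_\Omega\rho=0$, $J$ is invariant under adding constants, so nothing is lost by fixing the mean. Elements of $K_0$ satisfy $\|u\|_{L^\infty(\Omega)}\le \operatorname{diam}(\Omega)$, so $K_0$ is compact in $C(\overline\Omega)$ by Arzelà--Ascoli. Using the lower semicontinuity and convexity of $\Psi$ (as in \cite[Lemma 2.4]{BeJeMa14}), a minimizer $u$ exists. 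It solves the variational inequality of Definition~\ref{def:crit-pt} with $\Phi'(u)[v-u]=-\int_\Omega\rho(v-u)$ for all $v\in K$. At this stage $|\nabla u|$ may touch $1$, so it is not yet a solution in the sense of Definition~\ref{def:solution}.

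\emph{A priori gradient bound and passage to the limit.} Approximate $\rho$ by smooth $\rho_\varepsilon$ with zero mean and $\|\rho_\varepsilon\|_\infty\le\|\rho\|_\infty$. If needed, also approximate $\Omega$ by smooth uniformly convex domains and truncate $\mathbf a$ outside $\{|\xi|\le 1-\delta\}$ so that the operator is uniformly elliptic. Classical theory then gives smooth solutions $u_\varepsilon$. The key step is a uniform bound $|\nabla u_\varepsilon|\le 1-\theta$. I would work with $w=(1-|\nabla u|^2)^{-1/2}$ and show it is a subsolution of a divergence-form equation whose right-hand side involves $\rho$ only through terms in which integration by parts removes derivatives from $\rho$. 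Moser/De Giorgi iteration then bounds $\sup w$ in terms of $\|\rho\|_\infty$ and an $L^1$ bound on $w$. The $L^1$ bound comes from testing the equation with $u$ itself: since $\int_\Omega \mathbf a(\nabla u)\cdot\nabla u\ge \int_\Omega w-|\Omega|$, and $u$ has bounded oscillation, $\int_\Omega w\le C(1+\|\rho\|_\infty\operatorname{diam}\Omega)|\Omega|$. Convexity enters through the boundary: for Neumann data, $\partial_{\mathbf n}|\nabla u|^2=-2\,\mathrm{II}(\nabla u,\nabla u)\le0$ on $\partial\Omega$. Hence the boundary integrals in the iteration have a favourable sign and no boundary layer estimate is needed. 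Once the uniform bound holds, the truncation is inactive. The equation is then uniformly elliptic with constants depending on $\theta$, and the Grisvard-type identity for convex domains (again via $\mathrm{II}\ge0$) gives a uniform $W^{2,2}$ bound. Passing to the limit in $\varepsilon$ yields $u_\rho\in W^{2,2}(\Omega)\cap C(\overline\Omega)$ with $\|\nabla u_\rho\|_\infty\le1-\theta$ solving \eqref{eq-rho} weakly.

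\emph{Uniqueness.} If $u_1,u_2$ are two weak solutions, subtract the two identities and test with $u_1-u_2\in W^{1,2}(\Omega)$, which is admissible by density since both gradients stay below $1$. This gives $\int_\Omega(\mathbf a(\nabla u_1)-\mathbf a(\nabla u_2))\cdot(\nabla u_1-\nabla u_2)=0$. Strict monotonicity of $\mathbf a$ on the open unit ball then forces $\nabla u_1=\nabla u_2$, so $u_1-u_2$ is constant because $\Omega$ is connected.

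The main obstacle is the uniform gradient bound depending only on $\|\rho\|_{L^\infty}$. A naive Bernstein argument needs $\nabla\rho$, so the divergence-form subsolution structure for $w$ must be arranged carefully. I expect this computation, together with controlling the boundary terms by convexity, to be the technical heart of the argument.
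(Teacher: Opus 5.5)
The paper does not prove this lemma. It quotes it from \cite[Theorem 1.1]{Ma26} and uses it as a black box; only uniqueness up to constants is re-derived, in Lemma~\ref{lemma:crit}, by the same strict-convexity argument as your uniqueness paragraph, which is correct.

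Several of your peripheral steps are also sound. With $\nabla u\cdot\mathbf n=0$ on a convex boundary one has $\partial_{\mathbf n}|\nabla u|^2=-2\,\mathrm{II}(\nabla u,\nabla u)\le 0$. Since $D\mathbf a(\nabla u)\nabla w\cdot\mathbf n=w\,\partial_{\mathbf n}w$ there, the conormal boundary terms do have the favourable sign. Once $|\nabla u|\le 1-\theta$ is known, the Grisvard-type integration by parts gives the $W^{2,2}$ bound and the passage to the limit. Your first step, the weakly spacelike minimizer, is never used afterwards.

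The genuine gap is that the only non-routine step is announced rather than proved: the uniform estimate $|\nabla u_\varepsilon|\le 1-\theta$ with $\theta=\theta(N,\Omega,\|\rho\|_{L^\infty(\Omega)})$. This estimate is precisely the content of \cite{Ma26}; everything else in your outline is standard.

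The mechanism you sketch for that estimate also does not close as stated.
\begin{itemize}
\item Degenerate ellipticity. $D\mathbf a(\xi)=w\,I+w^3\,\xi\otimes\xi$ has eigenvalues $w$ and $w^3$. Any divergence-form inequality for $w$ obtained by differentiating the equation therefore has ellipticity ratio of order $w^2$, unbounded exactly in the regime you want to exclude. Euclidean De Giorgi--Moser constants depend on that ratio: a weighted iteration loses powers of $w$ at each step. So an $L^1$ bound on $w$ is far too weak a starting point, and the claimed bound $\sup w\le C(\|\rho\|_\infty,\|w\|_{L^1})$ is circular.
\item Moving the derivative off $\rho$. The integration by parts produces terms $\rho\times(\text{second derivatives of }u)\times(\text{powers of }w)$. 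These must be absorbed by the good second-fundamental-form term, and you have not checked that the powers of $w$ match.
\item What is missing. The standard way around this (Bartnik--Simon type estimates, cf.\ \cite{BaSi82}) is to work intrinsically on the spacelike graph, with induced metric $\delta-du\otimes du$ and area element $w^{-1}dx$. There $w$ satisfies a Jacobi-type identity $\Delta_\Sigma w=w|A|^2+(\text{a term linear in the gradient of the mean curvature})$. An argument of that kind, with your convexity sign controlling the boundary terms, is what your proposal needs.
\item The $L^1$ bound itself. It uses $\operatorname{osc}u\le\operatorname{diam}\Omega$, i.e.\ $|\nabla u|\le 1$, which is not available for the truncated problem, where $w$ need not even be defined. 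A continuity argument in $t\rho$, $t\in[0,1]$, applying the a priori bound only to strictly spacelike solutions, avoids this.
\end{itemize}
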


\begin{remark}\label{remark:density}
The notion of weak solution of \eqref{eq-rho} used in \cite{Ma26} is exactly the one of Definition~\ref{def:solution}, with $\rho$ in place of $\lambda a(x) g(u)$: namely, $u \in W^{2,2}(\Omega)\cap C(\overline{\Omega})$ such that
$$
\int_\Omega \frac{\nabla u\cdot\nabla\phi}{\sqrt{1-|\nabla u|^2}}\,dx
=
\int_\Omega \rho\phi\,dx
\qquad\text{for every }\phi\in C^1(\overline{\Omega}).
$$
In our work, though, we will use the equivalent formulation with test functions in $W^{1,2}(\Omega)$. Indeed, since under our hypothesis $C^1(\overline{\Omega})$ is dense in $W^{1,2}(\Omega)$ and
$$
\frac{\nabla u}{\sqrt{1-|\nabla u|^2}}\in L^\infty(\Omega)
$$
whenever $\|\nabla u\|_{L^\infty(\Omega)}<1$, both sides of the above identity define continuous linear functionals on $W^{1,2}(\Omega)$. Hence, the identity extends by density to every $\phi\in W^{1,2}(\Omega)$.
\end{remark}

\begin{lemma} \label{lemma:crit}
    If $\rho \in L^{\infty}(\Omega)$ is of zero-mean, then $u_\rho$ from Lemma \ref{lemma:existence} is a solution in $K$ of the variational inequality
    \begin{equation}
    \label{ineq:var1}
    \int_{\Omega} [\sqrt{1-|\nabla u|^2} - \sqrt{1-|\nabla v|^2} - \rho (v-u) ] \, dx \geq 0 \quad \text{for all } v\in K.  \end{equation}
    Moreover, $u_\rho$ is the unique solution in $K$ of \eqref{ineq:var1}, up to an additive constant.
\end{lemma}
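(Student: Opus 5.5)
The plan is to obtain the variational inequality from convexity together with the weak formulation, and then to get uniqueness from strict convexity of the integrand.

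\emph{Existence.} Let $u=u_\rho$ be given by Lemma~\ref{lemma:existence}. Since $\|\nabla u\|_{L^\infty(\Omega)}<1-\theta$, we have $u\in K$. Consider the convex function $F(\xi)=1-\sqrt{1-|\xi|^2}$ on the closed unit ball of $\mathbb R^N$. It is differentiable at every interior point $\xi$, with gradient $\xi/\sqrt{1-|\xi|^2}$. The supporting hyperplane inequality at the interior point $\nabla u(x)$ gives, for every $\eta$ with $|\eta|\le1$,
$$
F(\eta)-F(\nabla u(x))\ \ge\ \frac{\nabla u(x)\cdot(\eta-\nabla u(x))}{\sqrt{1-|\nabla u(x)|^2}} .
$$
This holds for $|\eta|=1$ as well, by continuity of $F$ on the closed ball. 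Fix $v\in K$, take $\eta=\nabla v(x)$ and integrate over $\Omega$. Then test the weak formulation with $\phi=v-u\in W^{1,\infty}(\Omega)\subset W^{1,2}(\Omega)$, which is allowed by Remark~\ref{remark:density}. Combining the two yields
$$
\int_\Omega\Big(\sqrt{1-|\nabla u|^2}-\sqrt{1-|\nabla v|^2}\Big)\,dx\ \ge\ \int_\Omega \rho\,(v-u)\,dx,
$$
which is exactly \eqref{ineq:var1}.

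\emph{Uniqueness.} Let $w\in K$ be another solution of \eqref{ineq:var1}. Write the inequality for $u$ with $v=w$ and the inequality for $w$ with $v=u$, and add them. The $\rho$-terms cancel and the $\sqrt{\cdot}$-terms cancel as well, so the sum is $0\ge 0$; this carries no information, and a sharper argument is needed. This is the main obstacle. I would use convexity along the segment instead. Set $J(v)=\int_\Omega F(\nabla v)\,dx-\int_\Omega\rho v\,dx$, which is convex on $K$. Inequality \eqref{ineq:var1} says precisely that $J(v)\ge J(u)$ for all $v\in K$, so both $u$ and $w$ minimize $J$ over $K$. Then $\frac{u+w}{2}\in K$ is also a minimizer, and therefore
$$
\int_\Omega\Big[\tfrac12F(\nabla u)+\tfrac12F(\nabla w)-F\big(\tfrac{\nabla u+\nabla w}{2}\big)\Big]\,dx=0,
$$
where the linear $\rho$-terms have cancelled. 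The integrand is nonnegative, so it vanishes a.e.

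Now $F$ is strictly convex on the closed unit ball. Indeed, $\sqrt{1-|\xi|^2}$ is strictly concave there: its graph is a hemisphere, so no nondegenerate segment lies on it. Hence $\nabla u=\nabla w$ a.e. Since $\Omega$ is connected, $w-u$ is constant.

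Conversely, adding a constant $c$ to $u$ preserves \eqref{ineq:var1}. This is because $\int_\Omega\rho\,c\,dx=0$ by the zero-mean assumption. This gives uniqueness up to an additive constant.
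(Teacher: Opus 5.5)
Your proposal is correct and follows essentially the same route as the paper: you obtain the variational inequality by testing the weak formulation with $\phi=v-u_\rho$ and applying the supporting-hyperplane (concavity) inequality for $\xi\mapsto\sqrt{1-|\xi|^2}$, and you get uniqueness from the midpoint argument, where strict concavity forces $\nabla u=\nabla w$ a.e. Your sign convention (minimizing the convex $J$ instead of maximizing the concave $J_\rho$) makes no difference, and your explicit check that adding a constant preserves \eqref{ineq:var1} because $\int_\Omega\rho\,dx=0$ is a point the paper leaves implicit.
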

\begin{proof}

    By $u_\rho$ being a weak solution of \eqref{eq-rho}, we have that
    $$\int_\Omega \frac{\nabla u_\rho \cdot \nabla \phi}{\sqrt{1-|\nabla u_\rho|^2}} \,dx = \int_\Omega \rho \phi \,dx \quad \text{for every } \phi \in C^1(\overline{\Omega}).$$
    Taking $\phi = v-u_\rho \in W^{1,2}(\Omega)$ (cf. Remark \ref{remark:density}), for some arbitrary $v \in K$, we obtain
    $$\int_\Omega \left[  \frac{ \nabla u_\rho \cdot \nabla(v-u_\rho) }{\sqrt{1-|\nabla u_\rho|^2}} - \rho(v-u_\rho)\right] dx = 0, $$
    which, using the concavity of the function $t \mapsto \sqrt{1-|t|^2}$, yields
    $$\begin{aligned} \int_{\Omega} [ \sqrt{1-|\nabla v|^2} - \sqrt{1-|\nabla u_\rho|^2} &\leq  \int_\Omega \frac{ \nabla u_\rho \cdot \nabla(v-u_\rho) }{\sqrt{1-|\nabla u_\rho|^2}} \\
    &=\int_\Omega \rho(v-u_\rho),\end{aligned} $$
    i.e., that $u_\rho$ is a solution of the variational inequality \eqref{ineq:var1}. Finally, to prove the uniqueness up to constants, we proceed as follows: Rewriting \eqref{ineq:var1}, $u$ solves it if and only if $u$ maximizes over $K$ the functional
$$
J_\rho(w):=\int_\Omega\Big[\sqrt{1-|\nabla w|^2}+\rho\, w\Big]\,dx.
$$
Suppose $u,w\in K$ both maximize $J_\rho$ over $K$, with common maximal value $m$. Since $K$ is convex, the midpoint $z:=\tfrac{u+w}{2}$ belongs to $K$. Since $t\mapsto\sqrt{1-|t|^2}$ is strictly concave on $\{t\in\mathbb{R}^N:|t|\le1\}$, for a.e.\ $x\in\Omega$
$$
\sqrt{1-|\nabla z(x)|^2}\ \ge\ \tfrac12\sqrt{1-|\nabla u(x)|^2}+\tfrac12\sqrt{1-|\nabla w(x)|^2},
$$
with equality at $x$ if and only if $\nabla u(x)=\nabla w(x)$. Integrating over $\Omega$,
\begin{equation}\label{eq:strict-concavity}
\int_\Omega \sqrt{1-|\nabla z|^2}\,dx\ \ge\ \tfrac12\int_\Omega\sqrt{1-|\nabla u|^2}\,dx+\tfrac12\int_\Omega\sqrt{1-|\nabla w|^2}\,dx,
\end{equation}
with equality if and only if $\nabla u=\nabla w$ a.e.\ in $\Omega$. On the other hand, the term $\int_\Omega\rho\,w\,dx$ is linear in $w$, so
$$
\int_\Omega \rho\,z\,dx=\tfrac12\int_\Omega\rho\,u\,dx+\tfrac12\int_\Omega\rho\,w\,dx.
$$
Combining this with \eqref{eq:strict-concavity} yields
$$
J_\rho(z)\ \ge\ \tfrac12 J_\rho(u)+\tfrac12 J_\rho(w)=m,
$$
with equality if and only if $\nabla u=\nabla w$ a.e.\ in $\Omega$. Since $z\in K$ and $m=\max_K J_\rho$, we also have $J_\rho(z)\le m$. Hence $J_\rho(z)=m$, which forces that
$$
\nabla u=\nabla w\quad\text{a.e.\ in }\Omega.
$$
Since $\Omega$ is connected (being convex) and $u,w\in W^{1,\infty}(\Omega)$ satisfy $\nabla(u-w)=0$ a.e., the function $u-w$ is a.e.\ equal to a constant $c_0\in\mathbb{R}$. Thus $u=w+c_0$, proving uniqueness up to an additive constant.
\end{proof}

\subsection{Correspondence of critical points and solutions}
As said before, the idea is to apply the variational framework laid out in the seminal work \cite{BeJeMa14} by Bereanu, Jebelean and Mawhin, applied to \eqref{eq-general}. In that regard, we must first establish the equivalence between critical points in Szulkin's sense of \eqref{functional} and solutions of \eqref{eq-general}, which is our next result:

\begin{theorem}\label{thm:critical-point-equivalence}
Let $u\in K$. Then $u$ is a critical point of $I_\lambda$ (in the sense of Definition \ref{def:crit-pt}) if and only if $u$ is a solution, in the sense of Definition~\ref{def:solution}, of
\begin{equation}\label{eq:PDE-neumann}
-\operatorname{div}\left(\frac{\nabla u}{\sqrt{1-|\nabla u|^2}}\right)=\lambda a(x)g(u)\ \text{ in }\Omega,
\qquad
\frac{\nabla u}{\sqrt{1-|\nabla u|^2}}\cdot \mathbf n=0\ \text{ on }\partial\Omega.
\end{equation}
In particular, any such $u$ automatically satisfies the compatibility condition
\begin{equation}\label{eq:compatibility}
\int_\Omega a(x)g(u)\,dx=0.
\end{equation}
Moreover, $I_\lambda$ is bounded from below and attains its infimum at some $u_0 \in K$, which is a critical point of $I_\lambda$, and hence a solution of \eqref{eq-general}.
\end{theorem}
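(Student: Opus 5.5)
The plan follows the Bereanu--Jebelean--Mawhin scheme, with Lemmas~\ref{lemma:existence} and \ref{lemma:crit} taking the place of the Dirichlet regularity theory.

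\emph{The easy direction (solution $\Rightarrow$ critical point).} Let $u$ be a solution in the sense of Definition~\ref{def:solution}, so $\|\nabla u\|_{L^\infty(\Omega)}<1$. By Remark~\ref{remark:density} we may test with $\phi=v-u\in W^{1,2}(\Omega)$ for any $v\in K$. The concavity of $t\mapsto\sqrt{1-|t|^2}$ then gives the variational inequality of Definition~\ref{def:crit-pt}, exactly as in the proof of Lemma~\ref{lemma:crit} with $\rho=\lambda a g(u)$.

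\emph{The harder direction (critical point $\Rightarrow$ solution).} Let $u\in K$ be a critical point and set $\rho:=\lambda a(x)g(u)\in L^\infty(\Omega)$; boundedness holds because $u$ is continuous on $\overline\Omega$ and $a\in L^\infty$.
\begin{enumerate}
\item First I derive the compatibility condition \eqref{eq:compatibility}. I test the variational inequality with $v=u\pm c$ for constants $c>0$. Since $\nabla v=\nabla u$, the $\Psi$ terms cancel, leaving $\mp c\lambda\int_\Omega a g(u)\ge 0$ for both signs. Hence $\int_\Omega\rho=0$.
\item Lemma~\ref{lemma:existence} now yields a strictly spacelike solution $u_\rho$ of \eqref{eq-rho} with $\|\nabla u_\rho\|_{L^\infty(\Omega)}<1-\theta$.
\item By Lemma~\ref{lemma:crit}, $u_\rho$ solves \eqref{ineq:var1} and is the unique solution in $K$ up to an additive constant.
\item The critical point inequality for $u$ is literally \eqref{ineq:var1} with this $\rho$, since $\Phi_\lambda'(u)[w]=-\lambda\int_\Omega a g(u)w$. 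So $u$ solves \eqref{ineq:var1}, and uniqueness gives $u=u_\rho+c_0$.
\item Therefore $\|\nabla u\|_{L^\infty(\Omega)}<1$, and $u$ satisfies the weak formulation of \eqref{eq-rho} with $\rho=\lambda a g(u)$, i.e.\ $u$ solves \eqref{eq:PDE-neumann} in the sense of Definition~\ref{def:solution}.
\end{enumerate}
The substantive step is identifying $u$ with $u_\rho$; Lemma~\ref{lemma:crit} is precisely what makes this work. The one point to check carefully is that $\rho$ is frozen at $u$, so the nonlinear problem reduces to a linear-datum variational inequality.

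\emph{Existence of a minimizer.} Boundedness from below comes from the coercivity computation \eqref{eq:coerc}. If $\inf_K I_\lambda=-\infty$, take a sequence $u_n\in K$ with $I_\lambda(u_n)\to-\infty$. Since $u\mapsto\lambda\max_{|s|\le R}G(s)\,\|a\|_{L^1(\Omega)}$ bounds $-I_\lambda$ on $L^\infty$-bounded subsets of $K$, $\|u_n\|_{L^\infty(\Omega)}$ must be unbounded. But then \eqref{eq:coerc} forces $I_\lambda(u_n)\to+\infty$ along a subsequence, a contradiction. For a minimizing sequence, the same argument, which is Lemma~\ref{boundness}, gives an $L^\infty$ bound. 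Arzelà--Ascoli then gives uniform convergence to some $u_0\in K$, since $K$ is closed in $C(\overline\Omega)$. Lower semicontinuity of $\Psi$ together with continuity of $\Phi_\lambda$ shows $u_0$ is a minimizer.

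\emph{The minimizer is a critical point.} For any $v\in K$ and $t\in(0,1]$, $u_0+t(v-u_0)\in K$. Convexity of $\Psi$ gives
$$0\le I_\lambda(u_0+t(v-u_0))-I_\lambda(u_0)\le t\big(\Psi(v)-\Psi(u_0)\big)+\Phi_\lambda(u_0+t(v-u_0))-\Phi_\lambda(u_0).$$
Dividing by $t$ and letting $t\to0^+$ yields the inequality of Definition~\ref{def:crit-pt}. By the equivalence proved above, $u_0$ is then a solution of \eqref{eq-general}.
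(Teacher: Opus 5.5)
Your proof is correct. For the equivalence itself it follows the paper: you get compatibility by testing with $v=u\pm c$, then use Lemma~\ref{lemma:existence} and the uniqueness in Lemma~\ref{lemma:crit} to write $u=u_\rho+c_0$. In the converse direction you run the concavity argument directly on $u$. This is what the paper means by applying Lemma~\ref{lemma:crit} ``to $u$ itself'', and your phrasing is the cleaner one: it needs only $\|\nabla u\|_{L^\infty(\Omega)}<1$ and the weak formulation, with no appeal to the uniqueness class of Lemma~\ref{lemma:existence}.

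Where you genuinely diverge is the final part. The paper gets the minimizer and its criticality in one stroke from \cite[Theorem 1.7]{Sz86}: a bounded-below functional satisfying (PS) attains its infimum at a critical point. That result rests on Ekeland's principle and on Corollary~\ref{cor:PS}. You instead use the direct method: Lemma~\ref{boundness} on a minimizing sequence, Arzel\`a--Ascoli, closedness of $K$, and lower semicontinuity of $\Psi$. You then verify criticality by a first-variation argument along the segment $u_0+t(v-u_0)$, using convexity of $K$ and $\Psi$ and differentiability of $\Phi_\lambda$.

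Your route is self-contained and does not use the Palais--Smale condition at all, only coercivity along minimizing sequences. The paper's route is shorter on the page because (PS) is already established and is needed anyway for the mountain-pass argument in Section~\ref{sec3}. You also make explicit a step the paper leaves implicit when it deduces boundedness from below from \eqref{eq:coerc}: that $-I_\lambda$ is bounded on $L^\infty$-bounded subsets of $K$.
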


\begin{proof}
    Let $u \in K$ be a critical point of $I_\lambda$. Then, this implies that $u$ solves
    \begin{equation} \label{ineq:proofteo1}
    \Psi(v) - \Psi(u) - \lambda \int_\Omega a(x)g(u) (v-u) \geq 0 \quad \text{for all } v\in K. \end{equation}
    Since $u \in K$, one can write
    $$\rho := \lambda a(x)g(u) \in L^\infty(\Omega),$$
    hence, if we prove that $\int_\Omega \rho = 0$, we can conclude this implication by Lemma \ref{lemma:existence} and Lemma \ref{lemma:crit}. To prove it, let $v = u+t \in K$ for $t \in \R$, in \eqref{ineq:proofteo1}. Since $\Psi(v) = \Psi(u)$ (due to $\nabla v = \nabla (u+t) = \nabla u$), one has that
    $-\lambda t \int_\Omega a(x)g(u) \geq 0$,
    for any $t \in \R$. On the one hand, taking $t=1$, this yields
    $$-\lambda \int_\Omega a(x)g(u) \geq 0,$$
    and, on the other hand, taking $t = -1$, it yields
    $$\lambda \int_\Omega a(x)g(u) \geq 0,$$
    hence, we must have
    $$\int_\Omega a(x) g(u) = 0.$$

    Therefore, $u$ itself solves the variational inequality of Lemma \ref{lemma:crit} with this $\rho$, which, combined with Lemma \ref{lemma:existence}, yields that there exists $u_\rho\in W^{2,2}(\Omega)\cap C(\overline\Omega)$ solving $-\mathcal M(u_\rho)=\rho$ with $\frac{\nabla u_\rho}{\sqrt{1-|\nabla u_\rho|^2}} \cdot \mathbf n=0$ on $\partial\Omega$, and by Lemma \ref{lemma:crit}, $u_\rho$ also solves the same variational inequality with this same $\rho$. Since both $u$ and $u_\rho$ solve this inequality with the same $\rho$, by the uniqueness up to a constant in  Lemma \ref{lemma:crit},
    $$u = u_\rho + c_0 \qquad \text{for some } c_0 \in \R.$$
    Then, defining
    $$\mathcal{M}(u) := \operatorname{div}\left( \frac{\nabla u}{\sqrt{1-|\nabla u|^2}} \right),$$ noting that $\mathcal M$ depends only on $\nabla u$, and that $\nabla(u_\rho+c_0)=\nabla u_\rho$, we obtain
    $$-\mathcal M(u) = -\mathcal M(u_\rho+c_0) = -\mathcal M(u_\rho) = \rho = \lambda a(x) g(u).$$
    Likewise, the Neumann condition satisfied by $u_\rho$ transfers to $u$. Hence $u$ itself solves \eqref{eq:PDE-neumann}, allowing us to conclude the first part of the proof.
    \medskip
    Now, let $u\in K$ be a solution of \eqref{eq:PDE-neumann}, that is,
    $$\int_\Omega \frac{\nabla u \cdot \nabla \phi}{\sqrt{1-|\nabla u|^2}}  = \lambda \int_\Omega a(x) g(u) \phi \quad \text{for every } \phi \in C^1(\overline{\Omega}).$$
    Taking $\phi = 1$, it yields
    $$\lambda \int_\Omega a(x) g(u) \, dx = 0,$$
    so it satisfies \eqref{eq:compatibility}. Now, to prove that it is a critical point, it follows directly from Lemma \ref{lemma:existence} and Lemma \ref{lemma:crit}, applied to $u$ itself, with $\rho = \lambda a(x) g(u)$.

    \medskip
    Finally, to prove that $I_\lambda$ is bounded from below on $K$,  
    we revisit the estimate obtained in the proof of Lemma \ref{boundness}, specifically \eqref{eq:coerc}, which shows that
$$
I_\lambda(u)\to+\infty
\qquad\text{as }\|u\|_{L^\infty(\Omega)}\to+\infty,
\quad u\in K.
$$
Hence $I_\lambda$ is bounded from below on $K$. Finally, using that $I_\lambda$ satisfies the (PS)-condition (Corollary \ref{cor:PS}) together with the boundedness from below just established, we can use {\cite[Theorem 1.7]{Sz86}}, which yields the existence of $u_0\in K$ such that
$$
I_\lambda(u_0)=\inf_{u\in K} I_\lambda(u),
$$
and $u_0$ is a critical point of $I_\lambda$, finishing the proof.
\end{proof}

\section{Existence of two solutions}\label{sec3}
In this section, we will establish a sufficient condition on the largeness of the parameter $\lambda$ in \eqref{eq-general} to ensure the existence of two distinct non-trivial solutions. Our first result establishes a lower bound for the values $\lambda$ for which problem \eqref{eq-general} admits a non-trivial solution given by global minimization of $I_\lambda$, namely, a ground-state solution.

\begin{theorem} \label{thm:GS}
Let $\lambda \geq0$, $a$ satisfy $(a_*)$ and $(a_+)$, and $g$ satisfy $(g_1)$ and $(g_2)$. Then, there exists $\Lambda>0$ such that, if $\lambda > \Lambda$, then problem \eqref{eq-general} admits a non-trivial solution $u_\lambda^{(l)}$ such that
\begin{equation} \label{eq:inf} I_\lambda(u_\lambda^{(l)}) = \inf_{u \in C(\overline{\Omega})} I_\lambda (u) < 0.\end{equation}
Moreover, if $g$ is odd, $u_\lambda^{(l)}$ can be chosen positive
in $\Omega$; in this case, $-u_\lambda^{(l)}$ is also a solution.

\end{theorem}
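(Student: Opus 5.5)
By Theorem~\ref{thm:critical-point-equivalence}, $I_\lambda$ is bounded below on $K$ and attains its infimum at some $u_0\in K$. This minimizer is a critical point of $I_\lambda$, hence a solution of \eqref{eq-general}. Since $I_\lambda\equiv+\infty$ off $K$, the infimum over $C(\overline\Omega)$ equals the infimum over $K$. Moreover $I_\lambda(0)=0$, because $G(0)=0$ and $\Psi(0)=0$, so every function of negative energy is nontrivial. Thus the whole point is to exhibit, for $\lambda$ large, a single test function $\varphi\in K$ with $I_\lambda(\varphi)<0$. We then take $u_\lambda^{(l)}:=u_0$, which gives \eqref{eq:inf} and $u_\lambda^{(l)}\neq0$.

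For the test function we use $(a_+)$. We pick a ball $B_r(x_0)\subset\operatorname{int}\{a>0\}$ and a nonnegative bump $\varphi\in C^1_c(B_r(x_0))$ with $\|\nabla\varphi\|_{L^\infty}\le 1/2$ and $\varphi\not\equiv0$, for instance $\varphi(x)=\tfrac14(r-|x-x_0|)_+$ suitably smoothed. Then $\varphi\in K$ and
$$
I_\lambda(\varphi)=\int_\Omega\bigl(1-\sqrt{1-|\nabla\varphi|^2}\bigr)\,dx-\lambda\int_{B_r(x_0)}a(x)G(\varphi)\,dx .
$$
The first term is a fixed finite constant, at most $|\Omega|$. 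In the second, $a>0$ a.e. on $B_r(x_0)$, and by $(g_1)$ we have $G(\varphi)>0$ wherever $\varphi>0$, a set of positive measure. Hence $\beta:=\int_\Omega aG(\varphi)\,dx>0$, and $I_\lambda(\varphi)<0$ as soon as $\lambda>\Lambda:=\Psi(\varphi)/\beta$. The only subtle point is $\beta>0$: this needs $a>0$ a.e. on an open set, and this is exactly why we work in $\operatorname{int}\{a>0\}$ rather than in $\{a>0\}$ itself.

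When $g$ is odd, $G$ is even, so $I_\lambda(|u|)=I_\lambda(u)$ for every $u\in K$, because $|\nabla|u||=|\nabla u|$ a.e. and $|u|\in K$. Therefore $|u_0|$ is again a global minimizer, hence a critical point and a solution. Likewise $I_\lambda(-u)=I_\lambda(u)$, so $-u$ is a minimizer and critical point whenever $u$ is. This gives a nonnegative nontrivial solution $u:=|u_0|$.

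The main obstacle is upgrading $u\ge0$ to $u>0$ in $\Omega$. For this we use Lemma~\ref{lemma:existence}, which gives $\|\nabla u\|_\infty<1-\theta$, so the operator is uniformly elliptic along $u$. We rewrite the equation as the linear problem $-\operatorname{div}(A(x)\nabla u)=\lambda a(x)c(x)u$, where $A=(1-|\nabla u|^2)^{-1/2}I$ is bounded and uniformly elliptic and $c(x)=g(u)/u$ is bounded because $g\in C^1$ with $g(0)=0$. Since $|\lambda a c|$ is bounded, the weak Harnack inequality for nonnegative supersolutions of $-\operatorname{div}(A\nabla u)+\lambda\|a\|_\infty\|c\|_\infty u\ge0$ (Trudinger) shows that $u$ cannot vanish at an interior point without vanishing identically on the connected set $\Omega$. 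Since $u\not\equiv0$, it follows that $u>0$ in $\Omega$.
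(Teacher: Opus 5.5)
Your proposal is correct and follows essentially the same route as the paper. You minimize via Theorem~\ref{thm:critical-point-equivalence}, take a $1$-Lipschitz bump supported in a ball inside $\operatorname{int}\{a>0\}$ to get $\Lambda=\Psi(\varphi)/\int_\Omega a\,G(\varphi)\,dx$, use the evenness of $I_\lambda$ for odd $g$ to pass to $|u_0|$, and obtain $u>0$ from a Harnack-type argument for the linearized equation, which is uniformly elliptic thanks to the gradient bound of Lemma~\ref{lemma:existence}. The only cosmetic difference is that you use the weak Harnack inequality for nonnegative supersolutions, whereas the paper uses the full Harnack inequality for nonnegative solutions; either works.
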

\begin{proof}
    By Theorem \ref{thm:critical-point-equivalence}, we know that the infimum \eqref{eq:inf} is achieved at some $u_\lambda^{(l)} \in K$. Therefore, what remains to be proved is that $I_\lambda(u_\lambda^{(l)}) \not= 0.$ 

    \medskip
    To do so, we note that $(a_+)$ implies the existence of some $x_0 \in \{ x \in \Omega \, : \, a(x) > 0 \}$ and a certain $r_0>0$ such that $\overline B_{r_0}(x_0) \subset \{ x \in \Omega \, : \, a(x) > 0 \} \subset \Omega$, and let 
    \begin{equation} \label{def:eta}\eta(x) :=
\begin{cases} \exp\left( \frac{r_0^2}{|x-x_0|^2 - r_{0}^2} \right) &\text{if } x \in B_{r_0}(x_0),\\
0 & \text{if } x \in \Omega \setminus B_{r_0}(x_0).
\end{cases}\end{equation}
With this, we define \begin{equation} \label{def:etabar} \overline \eta (x) := \min\{ r_0, \|\nabla \eta \|^{-1}_{L^\infty(\Omega)}\}\eta(x) ,\end{equation} which, by construction, satisfies $\overline \eta \in K$ and $\operatorname{supp} \overline \eta \subset \{ x \in \Omega \, : \, a(x) > 0 \}$.  Now, writing
$$I_\lambda(\overline{\eta}) = \Psi (\overline{\eta}) - \lambda \int_{\{a > 0 \} } a(x) G(\overline \eta ) \, dx, $$
we note that, choosing
\begin{equation} \label{def:Lambda} \Lambda := \frac{\Psi(\overline \eta)}{\int_{\{a >  0\}} a(x) G(\overline{\eta})\, dx} ,\end{equation}
it is easy to verify that, for every $\lambda > \Lambda $, one has 
$$I_\lambda (\overline \eta) < I_\Lambda (\overline \eta) = 0.$$
From this, the conclusion follows since $I_\lambda $ is bounded from below (cf. Theorem \ref{thm:critical-point-equivalence}) and $I_\lambda(0) = 0$.

Finally, for the last part of the statement, as argued in {\cite[Theorem 2.11]{BoCoZi26}}, it suffices to notice that, if $g$ is odd, then $I_{\lambda}(u)= I_\lambda(|u|) = I_\lambda(-|u|)$ for all $u \in C(\overline{\Omega})$. Hence, replacing the minimizer if necessary, we may assume that
$$
u_\lambda^{(l)}\geq 0 \qquad\text{in }\Omega.
$$
Since $I_\lambda(u_\lambda^{(l)})<0=I_\lambda(0)$, we also have
$u_\lambda^{(l)}\not\equiv0$. By Theorem~\ref{thm:critical-point-equivalence},
$u:=u_\lambda^{(l)}$ is a weak solution of
$$
-\operatorname{div}\left(
\frac{\nabla u}{\sqrt{1-|\nabla u|^2}}
\right)
=
\lambda a(x)g(u)
\qquad\text{in }\Omega.
$$
Now, define $$
h(s):=
\begin{cases}
\dfrac{g(s)}{s}, & s\neq 0,\\ 
g'(0), & s=0.
\end{cases}
$$
Since $g\in C^1(\mathbb R)$ and $g(0)=0$, the function $h$ is
continuous. Hence, setting
$$
c(x):=\lambda a(x)h(u(x)),
$$
we have $c\in L^\infty(\Omega)$ and
$$
\lambda a(x)g(u(x))=c(x)u(x) \quad \text{for a.e. $x\in\Omega$}
$$
Moreover, setting
$$
A(x):=
\frac{1}{\sqrt{1-|\nabla u(x)|^2}}\,I,
$$
where $I$ is the identity matrix of $N \times N$, the equation can be rewritten as
$$
-\operatorname{div}\big(A(x)\nabla u\big)=c(x)u
\qquad\text{in }\Omega.
$$
Then by the regularity result in Lemma \ref{lemma:existence}, there exists $\theta>0$ such
that
$$
\|\nabla u\|_{L^\infty(\Omega)}\leq 1-\theta.
$$
Consequently, $A\in L^\infty(\Omega;\mathbb R^{N\times N})$ is
uniformly elliptic; indeed,
$$
|\xi|^2
\leq A(x)\xi\cdot\xi
\leq
\frac{1}{\sqrt{1-(1-\theta)^2}}\,|\xi|^2
$$
for every $\xi\in\mathbb R^N$ and a.e. $x\in\Omega$. Therefore, the Harnack
inequality for nonnegative solutions (cf. {\cite[Theorem 8.20]{GT01}}) implies that either
$u\equiv0$ in $\Omega$ or $u>0$ in $\Omega$. Since $u\not\equiv0$, the former alternative is impossible, and hence
$$
u_\lambda^{(l)}>0\qquad\text{in }\Omega.
$$
\end{proof}

Having produced one non-trivial solution $u_\lambda^{(l)}$ as a global minimizer with negative energy, we now look for a \emph{second}, distinct, non-trivial solution with positive energy, which we obtain via a non-smooth mountain pass argument. 

\begin{theorem} \label{thm:MP}
    Let $\lambda >\Lambda$, where $\Lambda$ is as in Theorem \ref{thm:GS}, $a$ satisfy $(a_*)$ and $(a_+)$, and $g$ satisfy $(g_1)$, $(g_2)$ and $(g_3)$. Then, problem \eqref{eq-general} has a non-trivial solution $u_\lambda^{(s)}$ such that
    $$I_\lambda(u_\lambda^{(s)}) > 0.$$
    Moreover, if $g$ is odd, $u_\lambda^{(s)}$ can be chosen positive in $\Omega$.
\end{theorem}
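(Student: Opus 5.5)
We will apply Szulkin's mountain-pass theorem \cite[Theorem 3.2]{Sz86} to $I_\lambda=\Psi+\Phi_\lambda$ on $C(\overline\Omega)$. The (PS)-condition is already available from Corollary~\ref{cor:PS}, and the second endpoint is supplied by Theorem~\ref{thm:GS}, since $I_\lambda(u_\lambda^{(l)})<0=I_\lambda(0)$ for $\lambda>\Lambda$. What remains is the mountain-pass geometry at the origin. We need $\rho\in(0,\|u_\lambda^{(l)}\|_{L^\infty(\Omega)})$ and $\alpha>0$ such that $I_\lambda(u)\ge\alpha$ for every $u\in K$ with $\|u\|_{L^\infty(\Omega)}=\rho$; outside $K$ we have $I_\lambda=+\infty$. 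Szulkin's theorem then gives a critical point $u_\lambda^{(s)}$ at the level $c_\lambda\ge\alpha>0$. By Theorem~\ref{thm:critical-point-equivalence} it is a solution of \eqref{eq-general}, and it is distinct from $0$ and from $u_\lambda^{(l)}$ because its energy is positive.

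The main obstacle is the small-sphere estimate, since $\Psi$ only controls the gradient while the sphere is taken in the sup norm. For $u\in K$ we use $1-\sqrt{1-t^2}\ge t^2/2$, which gives $\Psi(u)\ge\tfrac12\int_\Omega|\nabla u|^2$. We split $u=\bar u+w$, where $\bar u$ is the mean of $u$ and $w$ has zero mean.

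\emph{First case: $|\bar u|\ge\rho/2$.} Then $|w|\le D$ with $D=\operatorname{diam}\Omega$, but this is too crude. Instead we use that $u$ is $1$-Lipschitz: if $|\bar u|$ is comparable to $\rho$, the values of $u$ stay close to $\bar u$ as long as the oscillation is small. We expand $-\lambda\int_\Omega aG(u)=-\lambda G(\bar u)\int_\Omega a-\lambda\int_\Omega a\,(G(u)-G(\bar u))$. The first term is at least $\lambda(A^--A^+)G(\bar u)>0$. The second term is bounded by $\lambda\|a\|_{L^\infty(\Omega)}\max_{|s|\le\rho}|g(s)|\,\|w\|_{L^1(\Omega)}$. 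By Poincar\'e, $\|w\|_{L^1(\Omega)}\le C\|\nabla u\|_{L^2(\Omega)}$, and Young's inequality absorbs the second term into $\tfrac14\|\nabla u\|_{L^2(\Omega)}^2$ at the cost of $C\lambda^2\max_{|s|\le\rho}g(s)^2$. Hypothesis $(g_3)$ now does its job: $g(s)^2=o(G(s))$, so we need $\max_{|s|\le\rho}g^2=o(G(\rho/2))$. This holds if $g$ is monotone near $0$. Failing that, we argue with $\sup_{|s|\le\rho}g^2\le o(1)\sup_{|s|\le\rho}G$ and compare $G(\bar u)$ with $\sup_{|s|\le\rho} G$ using the Lipschitz control. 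This is the delicate point, and I would first try to handle it by choosing $\rho$ so that $G$ is increasing on $[0,\rho]$, which $(g_1)$ guarantees.

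\emph{Second case: $|\bar u|<\rho/2$.} Then $\operatorname{osc}u\ge\rho/2$. Since $u$ is $1$-Lipschitz and $\Omega$ is convex, $\|\nabla u\|_{L^2(\Omega)}^2\ge c\rho^{N+2}$: a Lipschitz function with oscillation $\rho/2$ on a convex domain has Dirichlet energy at least of order $\rho^{N+2}$. On the other hand $|\Phi_\lambda(u)|\le\lambda\|a\|_{L^\infty(\Omega)}|\Omega|G(\rho)$, and $G(\rho)=o(\rho^2)$ by $(g_3)$, since $g'(0)=0$. This is not sufficient when $N\ge1$, so this case should also be treated through the mean decomposition as above, i.e.\ $G(u)-G(\bar u)$ controlled by $g\cdot w$, together with $|G(\bar u)|\le G(\rho/2)$ handled similarly. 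Altogether we obtain $I_\lambda(u)\ge\tfrac14\|\nabla u\|_{L^2(\Omega)}^2+\lambda(A^--A^+)G(\bar u)-o(G)$ terms, bounded below by a positive constant on the sphere.

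For positivity when $g$ is odd, we invoke the retraction principle Proposition~\ref{prop:abstract-positivity} with $p(u)=|u|$. This map is nonexpansive in $L^\infty$, maps $K$ into $K$, and leaves $I_\lambda$ unchanged. The principle yields a mountain-pass critical point $u_\lambda^{(s)}\ge0$ with $u_\lambda^{(s)}\not\equiv0$. Strict positivity then follows exactly as in Theorem~\ref{thm:GS}: we use Lemma~\ref{lemma:existence} for uniform ellipticity and apply the Harnack inequality.
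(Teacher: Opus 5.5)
Your overall architecture matches the paper's: Szulkin's mountain pass with the (PS) condition of Corollary~\ref{cor:PS}, endpoint $u_\lambda^{(l)}$, then positivity via Proposition~\ref{prop:abstract-positivity} with $p(u)=|u|$ and the Harnack argument of Theorem~\ref{thm:GS}. However, the small-sphere estimate, which you rightly call the main obstacle, is not established, and the route you propose for it does not work.

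You bound $\int_\Omega a\,(G(u)-G(\bar u))$ by the mean value theorem with $\max_{|s|\le\rho}|g(s)|\,\|w\|_{L^1(\Omega)}$ and then apply Young's inequality. This leaves a residual $C\lambda^2\max_{|s|\le\rho}g(s)^2$ that does not depend on $u$, and $(g_1)$--$(g_3)$ give no way to dominate it.

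\emph{First case.} You need $\max_{|s|\le\rho}g(s)^2=o(G(\rho/2))$, but $(g_3)$ compares $g^2$ and $G$ only at the same point. Your claim that monotonicity of $g$ suffices is false. Take $g(s)=s^{-2}e^{-1/s}$ for small $s>0$, extended oddly. It is increasing near $0$ and satisfies $(g_1)$ and $(g_3)$, with $G(s)=e^{-1/s}$, yet $g(\rho)^2/G(\rho/2)=\rho^{-4}\to\infty$. The fact that $G$ is increasing on $[0,\rho]$ holds automatically and does not address the issue.

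\emph{Second case.} With your lower bound $\|\nabla u\|_{L^2(\Omega)}^2\ge c\rho^{N+2}$, you would need $\max_{|s|\le\rho}g(s)^2=o(\rho^{N+2})$. This fails already for $g(s)=|s|^{p-2}s$ whenever $2<p\le N/2+2$. So the concluding inequality with ``$-o(G)$ terms'' is unsupported.

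\textbf{The missing idea.} Expand $G$ to second order at the mean:
$G(\bar u+w)=G(\bar u)+g(\bar u)w+R$, with $|R|\le\tfrac12\omega(\rho)w^2$ and $\omega(\rho)=\sup_{|s|\le\rho}|g'(s)|$. Here $\omega(\rho)\to0$ because $(g_1)$ and $(g_3)$ force $g'(0)=0$.
\begin{itemize}
\item The quadratic remainder is absorbed by $\Psi(u)\ge C_p\|w\|_{L^2(\Omega)}^2$ (Poincar\'e) once $\rho$ is small.
\item The linear term equals $g(\bar u)\int_\Omega(a-\bar a)w$. By Young's inequality it costs only $C\lambda^2g(\bar u)^2$.
\item Applying $(g_3)$ at the single point $\bar u$ bounds this cost by $\tfrac{\lambda|A|}{2}G(\bar u)$, where $|A|=A^--A^+$.
\end{itemize}
This gives $I_\lambda(u)\ge\tfrac{C_p}{4}\|w\|_{L^2(\Omega)}^2+\tfrac{\lambda|A|}{2}G(\bar u)$ on the whole small sphere, with no residual. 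Your two cases then close immediately: if $|\bar u|\ge\rho/2$, then $G(\bar u)\ge\min\{G(\rho/2),G(-\rho/2)\}>0$; otherwise $\|w\|_{L^2(\Omega)}^2\ge c\rho^{N+2}$.
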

\begin{proof}
To begin the proof, note that $I_\lambda(0) = 0$ and, as established in Corollary \ref{cor:PS}, $I_\lambda$ satisfies the (PS)-condition. Now, let us consider $\Lambda$ as defined in \eqref{def:Lambda}, $\lambda > \Lambda$ and $u_\lambda^{(l)}$ be the solution given by Theorem \ref{thm:GS}. The idea of the proof is to apply Szulkin's non-smooth Mountain Pass theorem (cf. \cite[Theorem 3.2]{Sz86}). I.e., it suffices to show that there exists $\alpha >0$ and $\rho \in (0, \|u_\lambda^{(l)}\|_{L^{\infty}(\Omega) }) $ such that,
    \begin{equation}\label{eq:MP} I_\lambda(u) \geq \alpha , \quad \text{for all } u \in K \text{ s.t. } \|u\|_{L^{\infty}(\Omega)} = \rho.\end{equation}
    To do so, let us consider for $u \in K$, 
    $$u = \bar u + v, \quad \text{where } \bar u := \frac{1}{|\Omega|}\int_\Omega u \, dx.$$
    From it, it follows that $v = u - \bar u \in K$ satisfy \begin{equation} \label{eq:v} \int_\Omega v \, dx= 0,\end{equation} and also
    $$|\nabla u| = |\nabla v| \quad \text{a.e. in } \Omega.$$
    Hence, $\Psi(u) = \Psi(v)$. Additionally, using the elementary inequality
    $$1-\sqrt{1-s^2} \geq \frac{s^2}2 \quad \text{for all } |s| \leq 1,$$
   and that \eqref{eq:v} allows us to use the Poincaré inequality on $v$, one has that 
    \begin{equation}\label{eq:Psiest} \Psi(u) \geq C_p \int_\Omega v^2 \, dx,\end{equation}
    for some $C_p>0$. Using this, we deduce
    \begin{equation} \label{eq:estI1}\begin{aligned} I_\lambda(u) &= \Psi(u) - \lambda \int_\Omega a(x) G(u) \, dx \\
    &\geq C_p \|v\|_{L^2(\Omega)}^2 - \lambda G(\bar u) \int_{\Omega} a(x) \, dx - \lambda \int_{\Omega} a(x) \big[G(\bar u + v) - G(\bar u ) \big] \, dx.\end{aligned}\end{equation}
    Therefore, what we want is to estimate from below \eqref{eq:estI1} in terms of $v$, that is, the zero-average oscillatory part of $u$. To do so, let
    \begin{equation} \label{eq:omegarho}
        \omega(\rho):=
        \sup_{|s|\leq\rho}|g'(s)|,
    \end{equation}
    and note that, by $(g_1)$ and $(g_3)$, one has that,
    \begin{equation}\label{eq:omega}
        \omega(\rho)\rightarrow0
        \qquad\text{as }\rho\to0.
    \end{equation}
    Now, assume that $\|u\|_{L^\infty(\Omega)}\leq\rho$ for some $\rho>0$. Then
    \begin{equation} \label{eq:estrho1}
        |\bar u|\leq\rho,
        \qquad
        |\bar u+tv|\leq\rho
        \quad\text{for every }t\in[0,1].
    \end{equation}
    To estimate from below \eqref{eq:estI1} in terms of $v$, since we already know that $-\lambda G(\bar u ) \int_\Omega a(x) \,dx \geq 0$ (by $(a_\ast)$ and $(g_1)$), we ought to estimate the difference
    $$G(\bar u +v) - G(\bar u),$$
    which we do by means of Taylor's formula applied to $G\in C^2(\R)$ at $\bar u$:
    \begin{equation} \label{eq:Taylor1}
        G(\bar u+v)
        =
        G(\bar u)+g(\bar u)v+R(\bar u, v),
    \end{equation}
    where $R$ is the integral rest, that is,
    $$
        R(\bar u, v)
        =
        v^2\int_0^1(1-t)
        g'(\bar u+tv)\,dt.
    $$
    Note that, by \eqref{eq:estrho1} and \eqref{eq:omegarho}, is easy to see that
    $$
        |R|
        \leq
        \frac{\omega(\rho)}{2}v^2.
    $$
    Substituting this into \eqref{eq:Taylor1}, we deduce
    \begin{equation}\label{eq:Taylor}
        \left|
        G(\bar u+v)-G(\bar u)-g(\bar u)v
        \right|
        \leq
        \frac{\omega(\rho)}{2}v^2.
    \end{equation}
    and, therefore, 
    $$
    \begin{aligned}
        \left|
        \int_\Omega
        a(x)\big[G(\bar u+v)-G(\bar u)\big]\,dx
        \right|
        \leq
        |g(\bar u)|
        \left|\int_\Omega a(x)v\,dx\right|
        +
        \frac{\|a\|_\infty}{2}
        \omega(\rho)\|v\|_{L^2(\Omega)}^2.
    \end{aligned}
    $$
    Since $\int_\Omega v\,dx=0$, we have
    $$
        \int_\Omega a(x)v\,dx
        =
        \int_\Omega (a(x)-\bar a)v\,dx, \quad \text{where } \bar a := \frac{1}{|\Omega|}\int_\Omega a(x) \, dx, 
    $$
    and hence, by the Cauchy--Schwarz inequality,
    $$
        \left|\int_\Omega a(x)v\,dx\right|
        \leq
        \|a-\bar a\|_{L^2(\Omega)}
        \|v\|_{L^2(\Omega)}.
    $$
    Thus, 
    \begin{equation}\label{eq:error}
        \left|
        \int_\Omega
        a(x)\big[G(\bar u+v)-G(\bar u)\big]\,dx
        \right|
        \leq |g(\bar u)|\,
        \|a-\bar a\|_{L^2 (\Omega)} \|v\|_{L^2(\Omega)}
        +
        \frac{\|a\|_{L^\infty (\Omega) }}{2}\omega(\rho)\|v\|_{L^2(\Omega)}^2.
    \end{equation}
    Combining \eqref{eq:estI1} and \eqref{eq:error}, we obtain
    \begin{equation}\label{eq:Ilambda-est}
    \begin{aligned}
        I_\lambda(u)
        \geq{}
        \left(C_p-\frac{\|a\|_{L^\infty (\Omega) }}{2}\lambda\omega(\rho)\right)
        \|v\|_{L^2(\Omega)}^2
        +\lambda |A|G(\bar u)
        -\|a-\bar a \|_{L^2 (\Omega)}\lambda |g(\bar u)|
        \|v\|_{L^2(\Omega)},
    \end{aligned}
    \end{equation}
    where $A := \int_\Omega a(x) dx$. By \eqref{eq:omega}, we can choose $\rho_1>0$ sufficiently small
    such that
    $$
        \lambda\frac{\|a\|_{L^\infty (\Omega) }}{2}\omega(\rho)\leq\frac{C_p}{2}
        \qquad\text{for }0<\rho\leq\rho_1.
    $$
    Hence,
    \begin{equation}\label{eq:Ilambda-est2}
    I_\lambda(u)
    \geq
    \frac {C_p} 2\|v\|_{L^2(\Omega)}^2
    +\lambda |A|G(\bar u)
    -\|a-\bar a \|_{L^2(\Omega)}\lambda |g(\bar u)|
    \|v\|_{L^2(\Omega)}.
    \end{equation}
    Applying Young's inequality
    $$cd \leq \varepsilon c^2 + \frac{d^2}{4\varepsilon} \quad \text{ for all } \varepsilon>0$$
    on the third term, with $c = \|v\|_{L^2(\Omega)}$, $d = \|a-\bar a \|_{L^2(\Omega)}\lambda |g(\bar u)|$ and $\varepsilon = \frac{C_p}{4}$, we get
    $$
        \|a-\bar a\|_{L^2(\Omega) }\lambda |g(\bar u)|
        \|v\|_{L^2(\Omega)}
        \leq
        \frac{C_p} 4\|v\|_{L^2(\Omega)}^2
        + \frac{1}{C_p} \|a-\bar a\|^2_{L^2(\Omega) }\lambda^2 g(\bar u)^2.
    $$
 Therefore,
    \begin{equation}\label{eq:Ilambda-est3}
        I_\lambda(u)
        \geq
        \frac {C_p}4\|v\|_{L^2(\Omega)}^2
        +\lambda |A|G(\bar u)
        -C_a\lambda^2g(\bar u)^2,
    \end{equation}
    with $C_a := \frac{1}{C_p} \|a-\bar a\|^2_{L^2(\Omega) }.$
    By $(g_3)$, for $\lambda>0$ fixed, we can choose $\rho_2>0$ sufficiently small such that
    $$
        C_a\lambda^2g(s)^2
        \leq
        \frac{\lambda |A|}{2}G(s)
        \qquad\text{for } |s|\leq\rho_2.
    $$
    Thus, taking
    $$
        0<\rho<\min\{\rho_1,\rho_2, \|u_\lambda^{(l)}\|_{L^{\infty}(\Omega)}\},
    $$
    and recalling that $|\bar u|\leq\|u\|_\infty=\rho$, we deduce from
    \eqref{eq:Ilambda-est3} that
    \begin{equation}\label{eq:Ilambda-est4}
        I_\lambda(u)
        \geq
        \frac {C_p}4\|v\|_{L^2(\Omega)}^2
        +\frac{\lambda |A|}{2}G(\bar u).
    \end{equation}
    It remains to obtain a uniform positive lower bound. We distinguish
    two cases: if $|\bar u|\geq\rho/2$, then, since $G(s)>0$ for every $s\neq0$,
    $$
        \gamma_\rho:=
        \min_{\rho/2\leq |s|\leq\rho}G(s)= \min \{G(\rho/2), G(-\rho/2) \}>0.
    $$
    Thus, by \eqref{eq:Ilambda-est4},
    \begin{equation}\label{eq:case1}
        I_\lambda(u)
        \geq
        \frac{\lambda |A|}{2}\gamma_\rho>0.
    \end{equation}
    On the other hand, suppose that $|\bar u|<\rho/2$. Since
    $$
        \|u\|_{L^\infty(\Omega)}=\rho,
    $$
    there exists $x_0\in\overline{\Omega}$ such that
    $$
        |u(x_0)|=\rho.
    $$
    Consequently,
    $$
        |v(x_0)|
        =
        |u(x_0)-\bar u|
        \geq
        |u(x_0)|-|\bar u|
        >
        \frac{\rho}{2}.
    $$
    Since $\|\nabla v\|_\infty\leq1$, it follows that
    $$
        |v(x)|\geq\frac{\rho}{4}
    $$
    for every $x\in B_{\rho/4}(x_0)\cap\Omega$. Therefore, for $\rho>0$
    sufficiently small, there exists $c_\Omega>0$ such that
    $$
        \|v\|_{L^2(\Omega)}^2
        \geq
        c_\Omega\rho^{N+2}.
    $$
    Hence, by \eqref{eq:Ilambda-est4},
    \begin{equation}\label{eq:case2}
        I_\lambda(u)
        \geq
        \frac{C_p \ c_\Omega}{4}\rho^{N+2}>0.
    \end{equation}
    Combining \eqref{eq:case1} and \eqref{eq:case2}, we conclude that
    $$
        I_\lambda(u)\geq\alpha>0
    $$
    for every $u\in K$ satisfying
    $\|u\|_{L^\infty(\Omega)}=\rho$, where
    $$
        \alpha:=
        \min\left\{
        \frac{\lambda |A|}{2}\gamma_\rho,
        \frac{C_p \ c_\Omega}{4}\rho^{N+2}
        \right\}.
    $$
    This proves \eqref{eq:MP}, and therefore, the existence of a critical point $u_\lambda^{(s)}$ characterized by its energy level
    $$c_\lambda = \inf_{f\in \Gamma} \,\sup_{t \in [0,1]} I_\lambda(f(t)),$$
    where $\Gamma := \{ f \in C\big ([0,1], C(\overline{\Omega}) \big ) \, : \, f(0) =0, \, f(1) = u_\lambda^{(l)} \}.$
    Moreover, using the Proposition \ref{prop:abstract-positivity} with $p(u) = |u|$, one has that, if $g$ is odd (and, consequently, $u_\lambda^{(l)}$ is positive), there exist a certain $u_\lambda^{(s)} \in K^+ := \{ u \in K \, : \, u \geq 0 \quad \text{in } \Omega \} = p(K)$ such that 
    $$c_\lambda = I_\lambda(u_\lambda^{(s)}).$$ Then, reasoning as at the end of the proof of Theorem \ref{thm:GS}, one can conclude that $u_\lambda^{(s)} > 0$ in $\Omega$, finishing the proof.

\end{proof}

Combining the two theorems above -- the first providing a negative-energy global minimizer, the second a positive-energy mountain pass critical point -- we obtain the following multiplicity result, which is the main outcome of this section.

\begin{corollary}
    Let $a$ satisfy $(a_*)$ and $(a_+)$, and $g$ satisfy $(g_1)$, $(g_2)$ and $(g_3)$. Then, there exists a $\Lambda >0$ such that, if $\lambda > \Lambda $, problem \eqref{eq-general} admits two non-trivial solutions $u_{\lambda}^{(s)}$ and $u_\lambda^{(l)}$ satisfying,
    $$I_\lambda(u_{\lambda}^{(l)}) < 0 < I_{\lambda}(u_\lambda^{(s)}).$$
    Moreover, if $g$ is odd, then both solutions can be chosen positive.
\end{corollary}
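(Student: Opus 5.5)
The plan is to combine Theorem~\ref{thm:GS} and Theorem~\ref{thm:MP}, both already proved, using the same threshold $\Lambda$ from \eqref{def:Lambda}. Concretely, fix $\Lambda$ as in \eqref{def:Lambda}, built from the bump $\overline\eta$ supported in $\operatorname{int}\{a>0\}$ given by $(a_+)$, and let $\lambda>\Lambda$.

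Theorem~\ref{thm:GS} requires only $(a_*)$, $(a_+)$, $(g_1)$ and $(g_2)$, all of which are assumed here. It gives a global minimizer $u_\lambda^{(l)}\in K$ with $I_\lambda(u_\lambda^{(l)})=\inf I_\lambda<0$. By Theorem~\ref{thm:critical-point-equivalence}, this minimizer is a weak solution in the sense of Definition~\ref{def:solution}. It is non-trivial because $I_\lambda(0)=0$.

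Next, Theorem~\ref{thm:MP} applies with the same $\Lambda$, now also using $(g_3)$. It yields a critical point $u_\lambda^{(s)}$ at the mountain-pass level $c_\lambda\ge\alpha>0$. Again by Theorem~\ref{thm:critical-point-equivalence}, this is a solution, and it is non-trivial since $I_\lambda(u_\lambda^{(s)})>0=I_\lambda(0)$. Distinctness follows from the energy ordering $I_\lambda(u_\lambda^{(l)})<0<I_\lambda(u_\lambda^{(s)})$, because two solutions with different energies cannot coincide.

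For positivity when $g$ is odd, I would invoke the final assertions of both theorems. The minimizer can be replaced by $|u_\lambda^{(l)}|$, which has the same energy, and the Harnack inequality then gives strict positivity. For the mountain-pass solution, Proposition~\ref{prop:abstract-positivity} with the retraction $p(u)=|u|$ places a critical point at level $c_\lambda$ inside $K^+$, and the Harnack argument again gives strict positivity. The energy signs are unchanged by these choices, so the two positive solutions remain distinct.

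There is no real obstacle beyond bookkeeping. The one point to check is that the mountain-pass geometry in Theorem~\ref{thm:MP} uses the endpoint $u_\lambda^{(l)}$, which needs $\lambda>\Lambda$ with the same $\Lambda$ as in Theorem~\ref{thm:GS}. It also needs the radius condition $\rho<\|u_\lambda^{(l)}\|_{L^\infty(\Omega)}$, which holds because $u_\lambda^{(l)}\not\equiv0$.
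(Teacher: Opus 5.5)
Your proposal is correct and follows the paper's own route: the corollary is stated as the direct combination of Theorem~\ref{thm:GS} and Theorem~\ref{thm:MP} with the same threshold $\Lambda$ from \eqref{def:Lambda}. Distinctness and non-triviality follow from the energy signs, and positivity for odd $g$ follows from the modulus retraction of Proposition~\ref{prop:abstract-positivity} together with the Harnack argument. The only bookkeeping point to keep explicit is that the condition $p(e)=e$ for $p(u)=|u|$ needs the mountain-pass endpoint $e=u_\lambda^{(l)}$ to be the already-nonnegative minimizer.
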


\section{Asymptotic behaviour as $\lambda \to +\infty$} \label{sec4}

For some of the results below, we will additionally impose the \emph{non-dead-zone} condition

\begin{itemize}
\item[$(a_0)$] $|{a=0}|=0$,
\end{itemize}
and consider the model nonlinearity
$$
g(u)=|u|^{p-2}u,\qquad p>2.
$$
Whenever a result holds under the general assumptions $(a_\ast)$-$(a_+)$ and $(g_1)$-$(g_2)$-$(g_3)$, this will be indicated accordingly. Likewise, whenever the specific choice of $g$ above is required, this will be stated explicitly.

\medskip

Our first result consists of proving that all solutions lie within a compact subset of $K$, independent of $\lambda$, and characterized by the compatibility condition \eqref{eq:compatibility}. Namely, 
\begin{equation} \label{eq:Ka} K_a:= \{ u \in K \, : \, \int_\Omega a (x)g(u) \, dx = 0 \}.\end{equation}
This result is of particular importance to our asymptotic analysis, since the compactness of $K_a$ allows us to ensure the uniform convergence of solutions $u_\lambda$ of \eqref{eq-general}, as $\lambda \to + \infty$, which is a property that lacks on the Neumann boundary condition case, unlike in the Dirichlet case (cf. \cite{BeJeMa14, BoCoZi26}). 

\begin{lemma} \label{lemma:compactness}
    Consider that $(a_*)$, $(a_+)$, $(g_1)$, $(g_2)$ and $(g_3)$ hold. Then, $K_a$ is a compact subset of $C(\overline{\Omega})$.
\end{lemma}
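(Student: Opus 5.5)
The plan is to apply Arzel\`a--Ascoli, which reduces the claim to two facts: $K_a$ is closed in $C(\overline\Omega)$, and $K_a$ is bounded in $L^\infty(\Omega)$.

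\textbf{Equicontinuity and closedness.} Equicontinuity is automatic, since every element of $K$ is $1$-Lipschitz; this uses that $\Omega$ is convex, so the a.e.\ gradient bound gives a genuine Lipschitz bound on $\overline\Omega$. Closedness also follows quickly. $K$ is closed in $C(\overline\Omega)$, as already used in Corollary~\ref{cor:PS}. The map $u\mapsto\int_\Omega a\,g(u)\,dx$ is continuous for uniform convergence, because $g$ is continuous and $a\in L^\infty(\Omega)$. So if $u_n\in K_a$ and $u_n\to u$ uniformly, then $u\in K$ and $\int_\Omega a\,g(u)=\lim_n\int_\Omega a\,g(u_n)=0$.

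\textbf{Uniform bound: setup.} This is the key step. I argue by contradiction, following the proof of Lemma~\ref{boundness}. Suppose $u_n\in K_a$ with $M_n:=\|u_n\|_{L^\infty(\Omega)}\to+\infty$. By the symmetry in $(g_1)$, I may assume $u_n(x_n)=M_n$. The $1$-Lipschitz bound then gives
\[
M_n-D\le u_n(x)\le M_n \quad\text{for all } x\in\overline\Omega,
\]
where $D=\operatorname{diam}(\Omega)$. Hence $u_n>0$ for large $n$.

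\textbf{Uniform bound: estimate.} I need the monotonicity of $g$ for large positive arguments. This does not follow directly from $(g_1)$. Instead I use $(g_2)$ in the form $g(s)=c\,s^{p-1}(1+o(1))$ as $s\to+\infty$. This gives
\[
\inf_{[M_n-D,M_n]}g\sim\sup_{[M_n-D,M_n]}g\sim c\,M_n^{p-1},
\]
because $(M_n-D)/M_n\to1$. Set $m_n:=\min_{[M_n-D,M_n]}g$ and $\mu_n:=\max_{[M_n-D,M_n]}g$, so that $\mu_n/m_n\to1$. With $A^\pm$ as in \eqref{eq:notation}, I obtain
\[
0=\int_\Omega a\,g(u_n)\,dx\le \mu_n A^+ - m_n A^-=m_n\bigl(A^+\mu_n/m_n - A^-\bigr).
\]
Since $m_n>0$, this forces $A^-\le A^+(1+o(1))$, contradicting $A^+<A^-$ from $(a_*)$. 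Here I need $p>1$ so that $M_n^{p-1}\to\infty$. In fact only positivity of $g$ and the ratio $\mu_n/m_n\to1$ are used, and the case $u_n(x_n)=-M_n$ is symmetric.

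\textbf{Main obstacle.} The step I expect to need care is this asymptotic comparison of $g$ on the window $[M_n-D,M_n]$. It relies on $(g_2)$ giving a limit of $g(s)/|s|^{p-2}s$ with $c>0$, so that values of $g$ over a window of fixed width become comparable. With the uniform bound in hand, Arzel\`a--Ascoli yields relative compactness of $K_a$. Together with closedness, this shows that $K_a$ is compact in $C(\overline\Omega)$. Hypotheses $(a_+)$ and $(g_3)$ are not needed.
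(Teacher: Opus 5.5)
Your proof is correct and uses the same mechanism as the paper's. The $1$-Lipschitz bound confines each $u\in K_a$ to a window of width $D=\operatorname{diam}(\Omega)$, and $(g_2)$ together with $\int_\Omega a<0$ makes the compatibility condition impossible once that window escapes to $\pm\infty$. The only difference is bookkeeping: the paper anchors the window at the mean, writing $u_n=\bar u_n+v_n$ with $\|v_n\|_{L^\infty(\Omega)}\le D$ and using $g(u_n)=c\,\bar u_n^{p-1}(1+o(1))$ uniformly, whereas you anchor it at the maximum and use the $A^\pm$ bracket with the minimum and maximum of $g$ on $[M_n-D,M_n]$, which neatly avoids any monotonicity of $g$.
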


\begin{proof}
    To begin with the proof, one can easily verify that $K_a$ is a closed subset of $C(\overline{\Omega})$: $K$ is closed in $C(\overline{\Omega})$, the map $F(u) := \int_\Omega a(x) g(u) \, dx$ is continuous on $C(\overline{\Omega})$, and $K_a = K \cap F^{-1}(\{0\})$ is the intersection of two closed sets.

    \medskip

    To prove the compactness, note that, for every $u \in K_a$, one can decompose it as
    $$u = \bar u + v,$$
    where $\bar u:= \frac{1}{|\Omega|} \int_\Omega u \, dx$ as usual, and $v\in K$ satisfy $\int_\Omega v \, dx = 0$. Let $\{u_n\}_{n \in \N} \subset K_a$, and consider the same decomposition, that is, for every $n \in \N$, consider \begin{equation} \label{eq:un} u_n = \bar u_n + v_n. \end{equation} First, we claim that $\{v_n\}_{n \in \N} \subset K$ is uniformly bounded.

\medskip

    To prove it, it suffices to see that, for every function $v \in K$ satisfying 
    $$\int_\Omega v \, dx = 0,$$
    due to the connectedness of $\Omega$, one has the existence of a certain $x_0 \in \Omega$ such that $v(x_0) = 0$. From this, the $1$-Lipschitz property of $v$ yields, 
    $$|v(x)| = |v(x) - v(x_0)| \leq |x-x_0| \leq D,$$
    for every $x \in \Omega$, where $D := \operatorname{diam}(\Omega)$. In other words, $\|v\|_{L^\infty(\Omega)} \leq D.$ In particular, this gives us a uniform bound for the sequence $\{v_n\}_{n \in \N}$. Now, since $\{v_n\}_{n \in \N} \subset K$, and is uniformly bounded, a simple application of the Ascoli-Arzelà theorem gives us (up to a subsequence) that
    $$v_n \to v_\infty \in K \quad \text{uniformly as } n \to + \infty, $$
    and, by the continuity of the integral, we have that $\int_\Omega v_\infty \, dx = 0$. 

    Now, in view of \eqref{eq:un}, one would have the compactness of $K_a$ if we prove that $\{\bar u_n\}_{n \in \N} \subset \R$ is bounded. Reasoning by contradiction, assume that $\bar u_n\to+\infty$
(the case $\bar u_n\to-\infty$ is analogous). Since
$$
u_n=\bar u_n+v_n,
\qquad
\|v_n\|_{L^\infty(\Omega)}\le D,
$$
we have
$$
\frac{u_n(x)}{\bar u_n}
=
1+\frac{v_n(x)}{\bar u_n}
\to1
$$
uniformly in $x\in\Omega$. Therefore, by $(g_2)$,
$$
g(u_n(x)) = c\,u_n(x)^{p-1}(1+o(1)) = c\,\bar u_n^{p-1}(1+o(1))
$$
uniformly in $x\in\Omega$. Consequently,
$$
0 = \int_\Omega a(x)g(u_n(x))\,dx = c\,\bar u_n^{p-1} \left( \int_\Omega a(x)\,dx+o(1) \right),
$$
which is impossible for $n$ sufficiently large, since $c>0$ and $\int_\Omega a(x)\,dx<0$. Therefore, $\{ \bar u _n\}_{n \in \N}$ must be bounded, and consequently, up to a subsequence, $\bar u _n \to \bar u_\infty \in \R$. The latter implies that
    $$u_n = \bar u_n + v_n \to u_\infty = \bar u_\infty + v_\infty \quad \text{as } n \to +\infty,$$
    up to a subsequence, and by $K_a$ being closed, $u_\infty \in K_a$. Since $\{u_n\}_{n \in \N}$ was arbitrary, the proof is completed.

\end{proof}
Now, with this compactness result, we can proceed with the asymptotic analysis. Our first result can be seen as a direct consequence of {\cite[Proposition 2.17]{BoCoZi26}}, thanks to the compactness of $K_a$, and we include the proof for the sake of completeness.

\begin{proposition} \label{prop:asGS}
    Assume that $(a_*)$, $(a_+)$, $(g_1)$, $(g_2)$ and $(g_3)$ hold. Let $M >0$ be such that, for every $\lambda \geq M$, $u_\lambda$ is a solution of \eqref{eq-general} satisfying
    \begin{equation} \label{eq:infchar}I_\lambda(u_\lambda) = \inf_{u \in C(\overline{\Omega})} I_\lambda(u). \end{equation}
    Then, up to a subsequence,
    $$u_\lambda \to u_\infty \quad \text{uniformly in $\overline \Omega $ as } \lambda \to + \infty,$$
    where $u_\infty \in K_a$ solves the following maximum problem

    \begin{equation} \label{eq:varmax}
    \max_{u \in K_a} \int_\Omega a(x) G(u) \, dx,\end{equation}
    and, $u_\infty \not \equiv 0$. In particular, the above conclusion holds for any family of ground-state solutions $\{u_\lambda^{(l)} \}_{\lambda \geq M}$ as in Theorem \ref{thm:GS}.
\end{proposition}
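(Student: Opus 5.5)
Every solution $u_\lambda$ is a critical point of $I_\lambda$, so by Theorem~\ref{thm:critical-point-equivalence} it satisfies the compatibility condition \eqref{eq:compatibility}, i.e.\ $u_\lambda\in K_a$ for every $\lambda\ge M$. Lemma~\ref{lemma:compactness} then says $K_a$ is compact in $C(\overline\Omega)$. Hence, for any sequence $\lambda_n\to+\infty$, a subsequence satisfies $u_{\lambda_n}\to u_\infty$ uniformly, with $u_\infty\in K_a$ because $K_a$ is closed. It remains to identify $u_\infty$ as a maximizer of \eqref{eq:varmax} and to show $u_\infty\not\equiv0$.

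\textbf{Step 1: the maximum in \eqref{eq:varmax} is attained.} The map $u\mapsto\int_\Omega aG(u)$ is continuous on $C(\overline\Omega)$ and $K_a$ is compact, so a maximizer exists. Set
$$
m:=\max_{K_a}\int_\Omega a\,G(u)\,dx .
$$

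\textbf{Step 2: comparison via minimality.} Divide the minimality \eqref{eq:infchar} by $\lambda$. For any $w\in K$,
$$
\frac{\Psi(u_\lambda)}{\lambda}-\int_\Omega aG(u_\lambda)\le \frac{\Psi(w)}{\lambda}-\int_\Omega aG(w).
$$
Since $0\le\Psi\le|\Omega|$ on $K$, this yields
$$
\int_\Omega aG(u_\lambda)\ \ge\ \int_\Omega aG(w)-\frac{|\Omega|}{\lambda}.
$$
Choose $w$ to be a maximizer from Step~1, which lies in $K_a\subset K$. Letting $\lambda=\lambda_n\to\infty$ and using uniform convergence gives $\int_\Omega aG(u_\infty)\ge m$. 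Since $u_\infty\in K_a$, we also have $\int_\Omega aG(u_\infty)\le m$. Therefore $u_\infty$ solves \eqref{eq:varmax}.

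\textbf{Step 3: nontriviality.} It suffices to show $m>0$, since $\int_\Omega aG(0)=0$. The main subtlety is that the natural competitor $\overline\eta$ from \eqref{def:etabar} need not lie in $K_a$. Two ways around this:

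\begin{itemize}
\item \emph{Use $K$ instead of $K_a$.} Apply the Step~2 inequality with $w=\overline\eta\in K$, which was allowed since the inequality holds for any $w\in K$. This gives directly
$$
\int_\Omega aG(u_\infty)\ \ge\ \int_{\{a>0\}}aG(\overline\eta)\,dx>0,
$$
using $(g_1)$ and the fact that $\operatorname{supp}\overline\eta\subset\{a>0\}$. Hence $u_\infty\not\equiv0$, without needing to adjust $\overline\eta$ into $K_a$.
\item \emph{Shift $\overline\eta$ into $K_a$.} Alternatively, one could add a constant $t$ to $\overline\eta$ to enforce the compatibility condition, using the intermediate value theorem on $t\mapsto\int_\Omega ag(\overline\eta+t)$. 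This is unnecessary given the first route.
\end{itemize}

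\textbf{Step 4: ground states.} The final sentence of the statement follows at once, because the ground states $u_\lambda^{(l)}$ of Theorem~\ref{thm:GS} satisfy \eqref{eq:infchar}.

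I expect the only delicate point to be ensuring that the competitor comparisons use admissible functions. This is handled by keeping the minimality inequality over all of $K$ and using $K_a$ only to give $\int_\Omega aG(u_\infty)\le m$.
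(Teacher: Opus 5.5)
Your proof is correct. The identification of the limit follows the paper: the paper argues by contradiction with a maximizer $u^*\in K_a$ and the boundedness of $\Psi$, which is your estimate $\int_\Omega aG(u_\lambda)\,dx\ge\int_\Omega aG(w)\,dx-|\Omega|/\lambda$ written indirectly.

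The genuine difference is in the non-triviality step. The paper obtains $m>0$ from a fixed ground state: for $\mu>\Lambda$, $u_\mu^{(l)}$ belongs to $K_a$ (being a solution, it satisfies \eqref{eq:compatibility}) and has $I_\mu(u_\mu^{(l)})<0$, hence $\int_\Omega aG(u_\mu^{(l)})\,dx>0$. This stays inside $K_a$ and simply recycles Theorem~\ref{thm:GS}. You instead use that minimality holds against every competitor in $K$ and test with $\overline\eta\notin K_a$, which gives the explicit bound $\int_\Omega aG(u_\infty)\,dx\ge\int_\Omega aG(\overline\eta)\,dx>0$.

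Your formulation actually yields more. Letting $w$ range over all of $K$ shows that $u_\infty$ maximizes $\int_\Omega aG$ on the whole of $K$. Hence $\max_{K_a}=\max_K$, and every maximizer over $K_a$ is a maximizer over $K$. The compatibility constraint is therefore not binding in the limit problem; it is just the first-order condition along constant shifts.

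A consequence worth noting concerns an open set $U$ with $\|\nabla u_\infty\|_{L^\infty(U)}<1$ and $a\neq0$ a.e.\ in $U$. There $u_\infty+t\varphi\in K$ for every $\varphi\in C_c^\infty(U)$ and $|t|$ small. So $a\,g(u_\infty)=0$ a.e.\ in $U$, and by $(g_1)$ $u_\infty\equiv0$ on $U$. Thus the positive-plateau alternative in Corollary~\ref{cor:limitprofile-plateaus} never actually occurs, and the implicit-function argument of Proposition~\ref{prop:inactive-region} is not needed.

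Finally, your second bullet in Step~3 would need one more line to give $m>0$. You would take $t$ maximizing $t\mapsto\int_\Omega aG(\overline\eta+t)\,dx$, rather than an arbitrary zero from the intermediate value theorem. You rightly do not rely on that bullet.
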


\begin{proof}
    By the compactness of $K_a$ (cf. Lemma \ref{lemma:compactness}) and the fact that, by $u_\lambda$ being a solution of \eqref{eq-general} one has that $u_\lambda \in K_a$ for every $\lambda > M$, we know that there exists a function $u_\infty \in K_a$ such that, up to a subsequence, $u_\lambda \to u_\infty$ in $C(\overline{\Omega})$ as $\lambda \to +\infty$. Reasoning by contradiction, assume that $u_\infty$ does not solve \eqref{eq:varmax}, and let $u^*$ be a solution of \eqref{eq:varmax} (which exists since $K_a$ is compact). Then, we have, by definition,
    $$\int_\Omega a(x) G(u^*) \, dx > \int_\Omega a(x) G(u_\infty) \, dx.$$
    Now, using that $u_\lambda$ is characterized by \eqref{eq:infchar}, we obtain
    $$0 \leq I_\lambda(u^*) - I_\lambda(u_\lambda)  = \Psi (u^*) - \Psi (u_\lambda) - \lambda \left( \int_\Omega a(x) \big[ G(u^*) - G(u_\lambda) \big ] \, dx \right).$$
    Since $\Psi$ is bounded on $K_a $, the first two terms are uniformly bounded in $\lambda$. Moreover, by $(g_1)$ and how we defined $u^*$, we have that
    $$\int_\Omega a(x) \big [ G(u^*) - G(u_\lambda) \big ] \, dx \to \int_\Omega a(x) \big [ G(u^*) - G(u_\infty) \big ] \, dx > 0 \quad \text{ as } \lambda \to +\infty.$$
    Combining this, we have
    $$0 \leq I_\lambda (u^*) - I_\lambda(u_\lambda) = O(1) -  \lambda \int_\Omega a(x) \big [ G(u^*) - G(u_\lambda) \big ] \, dx \to -\infty  \quad \text{as } \lambda \to +\infty,$$
    which is a contradiction. Therefore, $u_\infty$ must solve \eqref{eq:varmax}. Now, to see that $u_\infty \not \equiv 0$, it suffices to see that, for any $\mu > \Lambda$ where $\Lambda$ is as defined in Theorem \ref{thm:GS}, $$\int_\Omega a(x)G(u_\mu^{(l)}) \, dx >0,$$ 
    where $u_\mu^{(l)} \in K_a$ is the non-trivial ground-state solution given by said theorem, that is,
    $$\inf_{u \in C(\overline \Omega)}I_\mu(u) = I_\mu(u_\mu^{(l)} ).$$ Hence the conclusion follows.
\end{proof}

\begin{remark}
We stress that $(g_2)$ can be weakened in Lemma~\ref{boundness},
Theorem~\ref{thm:critical-point-equivalence}, Lemma~\ref{lemma:compactness}, and Proposition~\ref{prop:asGS}.
More precisely, it is enough to assume, in place of $(g_2)$, that
\begin{enumerate}
    \item[(i)]
    $\displaystyle G(s)\to+\infty$ as $|s|\to+\infty$;

    \item[(ii)]
     $\displaystyle \lim_{\substack{\omega\to1\\ |s|\to+\infty}}
\frac{g(\omega s)}{g(s)}
=1.$
    \item[(iii)]
    $\displaystyle
    \frac{|g(s)|}{G(s)}\to0
    $ as $|s|\to+\infty$.
\end{enumerate}
Indeed, these are the only asymptotic properties of $(g_2)$
used in the relevant proofs. In the coercivity argument in Lemma \ref{boundness}, if
$M_n\to+\infty$, condition (ii) yield, writing $\omega_n = \frac{M_n}{M_n-D}$,
$$
g(M_n)= g(\omega_n (M_n-D))  =g(M_n-D)(1+o(1)),
$$
while conditions (ii) and (iii) combined give
$$
G(M_n)-G(M_n-D)
\le
D\sup_{t\in[M_n-D,M_n]}g(t)
=o\bigl(G(M_n-D)\bigr).
$$
Together with (i) and $A^- > A^+$, this implies
$I_\lambda(u_n)\to+\infty$. The case $M_n\to-\infty$ is
analogous. Likewise, in the proof of Lemma~\ref{lemma:compactness}, suppose that
$\bar u_n\to+\infty$ and write
$$
u_n=\bar u_n+v_n,
\qquad
\|v_n\|_{L^\infty(\Omega)}\le D.
$$
For every $x\in\Omega$, set
$$
\omega_n(x):=1+\frac{v_n(x)}{\bar u_n},
$$
so that
$$
u_n(x)=\omega_n(x)\bar u_n.
$$
Since
$$
\sup_{x\in\Omega}|\omega_n(x)-1|
\le \frac{D}{\bar u_n}\to0,
$$
condition (ii) yields
$$
\frac{g(u_n(x))}{g(\bar u_n)}
=
\frac{g(\omega_n(x)\bar u_n)}{g(\bar u_n)}
\to1 \qquad \text{uniformly in $x\in\Omega$.}
$$
Hence
$$
g(u_n(x))=g(\bar u_n)(1+o(1))
$$
uniformly in $x\in\Omega$, and consequently
$$
\int_\Omega a(x)g(u_n(x))\,dx
=
g(\bar u_n)
\left(
\int_\Omega a(x)\,dx+o(1)
\right).
$$
Since $\bar u_n>0$ for $n$ sufficiently large, $(g_1)$ gives
$g(\bar u_n)>0$, while
$$
\int_\Omega a(x)\,dx+o(1)<0.
$$
Thus the right-hand side is strictly negative for $n$ sufficiently large,
contradicting the compatibility condition
$$
\int_\Omega a(x)g(u_n(x))\,dx=0.
$$
The case $\bar u_n\to-\infty$ is analogous. Finally, notice that $(g_2)$ implies (i), (ii) and (iii), but the converse does not hold. For instance, $g(s)=|s|^{p-2}s\log(1+|s|)$ satisfies (i), (ii) and (iii), but not $(g_2)$. We however retain the simpler asymptotic assumption $(g_2)$ throughout
the paper for readability and consistency with the model case $g(u)= |u|^{p-2} u$ with $p>2$.
\end{remark}
In what follows, we examine the asymptotic behaviour of the min-max solutions obtained via the mountain pass theorem (see Theorem \ref{thm:MP}), namely $u_\lambda^{(s)}$. To this end, we reduced the problem to the model case $g(u) = |u|^{p-2} u$ with $p>2$, which enabled us to determine the asymptotic limit of $u_\lambda^{(s)}$ as $\lambda \to +\infty$, and also to establish a decay rate for its corresponding energy level $c_\lambda$. This is summarized in the following result.

\begin{proposition}
\label{prop:MP-asymptotics}
Assume $(a_\ast)$, $(a_+)$, and $(a_0)$, and consider the model
nonlinearity
$$
g(u)=|u|^{p-2}u,\qquad p>2.
$$
Let $\lambda > \Lambda$ (as in Theorem \ref{thm:GS}),  $u_\lambda^{(l)}$ be the ground-state solution and $u_\lambda^{(s)}$ be the mountain-pass solution characterized by its energy level,
$$c_\lambda:= \inf_{f\in \Gamma} \sup_{t \in [0,1]} I_\lambda(f(t)), \quad \Gamma := \{ f \in C\big ([0,1], C(\overline{\Omega}) \big ) \, : \, f(0) =0, \, f(1) = u_\lambda^{(l)} \}.$$
Then
\begin{equation} \label{eq:clambdau0}c_\lambda=O\left(\lambda^{-\frac{2}{p-2}}\right)
\quad\text{and}\quad
u_\lambda^{(s)}\rightarrow0
\quad\text{in }C(\overline{\Omega})
\end{equation}
as $\lambda\to+\infty$.
\end{proposition}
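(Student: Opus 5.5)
The plan has two parts: an explicit upper bound for $c_\lambda$ from a radial path through the ground state, and a compactness argument in $K_a$ to identify the limit of $u_\lambda^{(s)}$.

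\medskip

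\textbf{Rate for $c_\lambda$.} I would use the straight segment $f(t)=t\,u_\lambda^{(l)}$, $t\in[0,1]$, which belongs to $\Gamma$, so that $c_\lambda\le \max_{t\in[0,1]}I_\lambda(t u_\lambda^{(l)})$.

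For the gradient term, since $u_\lambda^{(l)}\in K$ and $1-\sqrt{1-s^2}\le s^2$ for $|s|\le1$, we get $\Psi(t u_\lambda^{(l)})\le t^2|\Omega|$. For the potential term, homogeneity of the model nonlinearity gives
$$I_\lambda(t u_\lambda^{(l)})\le |\Omega|\,t^2-\lambda t^p B_\lambda,\qquad B_\lambda:=\int_\Omega a(x)G(u_\lambda^{(l)})\,dx .$$
Maximizing the right-hand side over $t\ge0$ yields
$$c_\lambda\le C_p'\,|\Omega|^{\frac{p}{p-2}}(\lambda B_\lambda)^{-\frac{2}{p-2}} .$$
It therefore suffices to have $\liminf_{\lambda\to\infty}B_\lambda>0$. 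This follows from Proposition~\ref{prop:asGS}. Every subsequential limit $u_\infty$ of $u_\lambda^{(l)}$ maximizes \eqref{eq:varmax}. That maximum value $m$ is strictly positive, since it dominates $\int_\Omega aG(u_\mu^{(l)})>0$ for any fixed $\mu>\Lambda$. Uniform convergence then gives $B_\lambda\to m>0$ along the whole family, which proves $c_\lambda=O(\lambda^{-2/(p-2)})$.

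\medskip

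\textbf{Convergence of $u_\lambda:=u_\lambda^{(s)}$.} By Theorem~\ref{thm:critical-point-equivalence}, $u_\lambda\in K_a$. By Lemma~\ref{lemma:compactness}, every sequence $\lambda_n\to\infty$ has a subsequence with $u_{\lambda_n}\to w$ uniformly, for some $w\in K_a$. Since $g$ is odd we may take $u_\lambda>0$, so $w\ge0$. Because the limit will be shown to be $0$ independently of the subsequence, the whole family converges.

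To identify $w$, I would test the weak formulation (Remark~\ref{remark:density}) with $\phi=u_\lambda$. Using $g(u)u=pG(u)$, this gives
$$\int_\Omega \frac{|\nabla u_\lambda|^2}{\sqrt{1-|\nabla u_\lambda|^2}}\,dx=p\lambda\int_\Omega aG(u_\lambda)\,dx .$$
Combined with $I_\lambda(u_\lambda)=c_\lambda\ge0$ and $\Psi\le|\Omega|$ on $K$, this yields $0\le\lambda\int_\Omega aG(u_\lambda)\le\Psi(u_\lambda)\le|\Omega|$. Hence $\int_\Omega aG(w)=0$, and together with the compatibility condition, $\int_\Omega a\,w^{p-1}=0$.

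If $w$ were a constant $C$, the last identity would read $C^{p-1}\int_\Omega a=0$. Since $\int_\Omega a<0$, this forces $C=0$. So the remaining task is to show that $w$ is constant, i.e.\ that $\int_\Omega|\nabla u_\lambda|^2\to0$ (or at least $\nabla w=0$ a.e.).

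\medskip

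\textbf{Main obstacle.} The key step is proving $\nabla w=0$. The energy identity above only gives $\Psi(u_\lambda)=c_\lambda+\frac1p\int_\Omega \frac{|\nabla u_\lambda|^2}{\sqrt{1-|\nabla u_\lambda|^2}}$. Since $p>2$, this combination does not directly control $\Psi$ by $c_\lambda\to0$. Testing Definition~\ref{def:crit-pt} with $v=0$ only reproduces $\Psi\ge \frac{p}{p-1}c_\lambda$.

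My first attempt would exploit the mountain-pass characterization more finely. I would build comparison paths of the form $t\mapsto t u_\lambda$ followed by a path to $u_\lambda^{(l)}$ inside $\{I_\lambda<0\}$. The aim is to show that if $\int_\Omega|\nabla u_\lambda|^2$ stayed bounded below, then, because $\lambda\int_\Omega aG(u_\lambda)$ is comparable to it, one would have $\max_t I_\lambda(tu_\lambda)\ge c$ for some fixed $c>0$ along the ray. This would contradict $c_\lambda\to0$.

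Failing that, I would pass to the limit in the equation divided by $\lambda$ on open sets $\{a>0\}$ and $\{a<0\}$; this is where $(a_0)$ enters. The goal there is to deduce from $\int_\Omega aG(w)=0$ and the sign structure that $w$ cannot be a nonzero nonnegative profile.
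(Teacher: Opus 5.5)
Your bound on $c_\lambda$ is correct but argued differently from the paper. You obtain $\liminf_\lambda \int_\Omega aG(u_\lambda^{(l)})\,dx>0$ from Proposition~\ref{prop:asGS}: every subsequential limit maximizes \eqref{eq:varmax}, and that maximum is positive. The paper instead gets the explicit bound $\int_\Omega a|u_\lambda^{(l)}|^p\ge p\kappa-p\Psi(\bar\eta)/\lambda$, with $\kappa=\int_{\{a>0\}}aG(\bar\eta)\,dx$. It does this by comparing $I_\lambda(u_\lambda^{(l)})\le I_\lambda(\bar\eta)$ with $I_\lambda(u_\lambda^{(l)})\ge-\frac{\lambda}{p}\int_\Omega a|u_\lambda^{(l)}|^p$. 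That route is more elementary and gives an explicit threshold in $\lambda$, while yours identifies the limiting constant but uses the limiting-problem machinery.

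The second part has a genuine gap: you never prove $u_\lambda^{(s)}\to0$. Your reduction (show $w$ is constant, e.g.\ via $\int_\Omega|\nabla u_\lambda|^2\to0$) asks for more than is needed, and none of your estimates controls it. Your first fallback is also logically flawed. A lower bound for $\max_t I_\lambda(tu_\lambda)$ along one particular ray says nothing about $c_\lambda$, which is an infimum over all paths, so it cannot contradict $c_\lambda\to0$.

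The missing idea is that you already hold the decisive estimate. From your identity and $c_\lambda\ge0$,
$$\int_\Omega\frac{|\nabla u_\lambda|^2}{\sqrt{1-|\nabla u_\lambda|^2}}\,dx=p\lambda\int_\Omega aG(u_\lambda)\,dx\le p|\Omega|.$$
The elementary inequality $\frac{s}{\sqrt{1-s^2}}\le 2\bigl(\frac{s^2}{\sqrt{1-s^2}}+1\bigr)$ on $[0,1)$ turns this into a $\lambda$-independent $L^1$ bound on the flux $\nabla u_\lambda/\sqrt{1-|\nabla u_\lambda|^2}$. This bound is not automatic, since the constant $\theta$ in Lemma~\ref{lemma:existence} depends on $\|\lambda a\,g(u_\lambda)\|_{L^\infty(\Omega)}$, which grows with $\lambda$.

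With the flux bound, the weak formulation gives $\bigl|\int_\Omega a\,g(u_\lambda)\phi\,dx\bigr|\le\frac{C}{\lambda}\|\nabla\phi\|_{L^\infty(\Omega)}$ for every $\phi\in C^1(\overline\Omega)$. Letting $\lambda\to\infty$ along a uniformly convergent subsequence yields $\int_\Omega a\,g(w)\phi\,dx=0$ for all such $\phi$, hence $a\,w^{p-1}=0$ a.e. Then $(a_0)$ gives $w\equiv0$ directly.

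So your second fallback ("pass to the limit in the equation divided by $\lambda$") points the right way. Without the flux bound, though, the left-hand side divided by $\lambda$ is not known to vanish. Once you have it, you need nothing about $\nabla w$, constancy of $w$, or the sign structure of $a$. The conclusion is pointwise a.e., which is much stronger than the integral identity $\int_\Omega aG(w)\,dx=0$ you derived.
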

\begin{proof}
    Let us consider $f(t) = t u_\lambda^{(l)}$. Clearly, $f \in \Gamma $. Therefore, by the characterization of $c_\lambda$, one has that
    $$c_\lambda \leq \max_{t \in [0,1]} I_\lambda(t u_\lambda^{(l)}).$$
    Now, we turn ourselves to study
    $$t\mapsto I_\lambda(tu_\lambda^{(l)}) = \int_\Omega 1 - \sqrt{1-t^2 |\nabla u_\lambda^{(l)}|^2} \, dx - t^p  \frac{\lambda}{p} \int_\Omega a(x) |u_\lambda^{(l)}|^p \, dx.$$
    To do so, we first use the following elementary inequality involving the map $s\mapsto 1-\sqrt{1-s^2} $,
    $$1-\sqrt{1-(ts)^2} \leq t^2 (1 -\sqrt{1-s^2}) \quad \text{for every } t\in [0,1],$$
    which implies, 
    $$\Psi(t u_\lambda^{(l)}) \leq t^2 \Psi(u_\lambda^{(l)}) \quad \text{for every } t\in [0,1].$$
    Thus, using that $\Psi(u) \leq |\Omega|$ for every $u \in K$, one has that
    $$\max_{t \in [0,1]} I_\lambda(t u_\lambda^{(l)}) \leq \max_{t \in [0,1]} \left( t^2 |\Omega | - t^p \frac{\lambda}{p}  \, \int_\Omega a(x) |u_\lambda^{(l)}|^p \, dx \right).$$
    Now, we obtain a uniform positive lower bound for $\int_\Omega a(x)|u_\lambda^{(l)}|^p\,dx$. Recall from the proof of Theorem~\ref{thm:GS} the fixed test function $\bar \eta \in K$ (cf. \eqref{def:eta} and \eqref{def:etabar}), independent of $\lambda$, with $\operatorname{supp} \bar \eta \subset \{a>0\}$, and set
$$\kappa := \int_{\{a>0\}} a(x)\, G(\bar \eta) \, dx > 0, \qquad \Lambda = \frac{\Psi(\bar \eta)}{\kappa}.$$
For every $\lambda > \Lambda$, since $u_\lambda^{(l)}$ minimizes $I_\lambda$ over $K$,
$$I_\lambda(u_\lambda^{(l)}) \leq I_\lambda(\bar\eta) = \Psi(\bar\eta) - \lambda \kappa.$$
On the other hand, since $\Psi \geq 0$ on $K$ and $G(s) = |s|^p/p$,
$$I_\lambda(u_\lambda^{(l)}) = \Psi(u_\lambda^{(l)}) - \frac{\lambda}{p} \int_\Omega a(x) |u_\lambda^{(l)}|^p \, dx \geq -\frac{\lambda}{p} \int_\Omega a(x) |u_\lambda^{(l)}|^p \, dx.$$
Combining these two estimates,
$$\int_\Omega a(x) |u_\lambda^{(l)}|^p \, dx \geq p\kappa - \frac{p\,\Psi(\bar \eta)}{\lambda}.$$
Hence, for every $\lambda \geq  \max\{\Lambda,\ 2\Psi(\bar\eta)/\kappa\}$,
$$\int_\Omega a(x) |u_\lambda^{(l)}|^p \, dx \geq \frac{p\kappa}{2} =: A_0 > 0.$$Hence, taking $\lambda$ large enough, one has 
    \begin{equation} \label{eq:clambda}
    c_\lambda \leq \max_{t \in [0,1]} I_\lambda(t u_\lambda^{(l)}) \leq \max_{t \in [0,1]} \left( t^2 |\Omega | - t^p\frac{\lambda}{p}  A_0 \right),\end{equation}
    which, by differentiating the right side, yields that the maximum is reached in
    $$t_\lambda = \left(\frac{2 |\Omega|}{A_0 \lambda}\right)^{\frac{1}{p-2}}. $$
    Putting this in \eqref{eq:clambda} yields
    $$c_\lambda \leq C \lambda^{\frac{-2}{p-2}},$$
    hence proving the first part of \eqref{eq:clambdau0}.

    \medskip
    Now focus on proving the last part of \eqref{eq:clambdau0}. See that, by $u_\lambda^{(s)}$ being a solution of \eqref{eq-model}, 
    \begin{equation} \label{eq:weaksol}\int_\Omega \frac{ \nabla u_\lambda^{(s)} \cdot \nabla \phi }{\sqrt{1 - |\nabla u_\lambda^{(s)}|^2}} \,dx = \lambda \int_{\Omega} a(x) |u_{\lambda}^{(s)}|^{p-2} u_\lambda^{(s)} \phi\, dx \quad \text{for every } \phi \in C^1(\overline\Omega).\end{equation}
    Hence, testing with $\phi = u_{\lambda}^{(s)}$, it yields
    $$\int_\Omega \frac{ |\nabla u_\lambda^{(s)}|^2  }{\sqrt{1 - |\nabla u_\lambda^{(s)}|^2}} \,dx = \lambda \int_{\Omega} a(x) |u_{\lambda}^{(s)}|^{p} \, dx, $$
    which, combined with the expression of $c_\lambda$, yields
    $$0<c_\lambda = \Psi(u_\lambda^{(s)}) - \frac{1}{p} \int_\Omega \frac{ |\nabla u_\lambda^{(s)}|^2  }{\sqrt{1 - |\nabla u_\lambda^{(s)}|^2}} \,dx, $$
    from which, using that $\Psi(u_\lambda^{(s)}) \leq| \Omega|$, we deduce that
    $$ \left \| \frac{ |\nabla u_\lambda^{(s)}|^2  }{\sqrt{1 - |\nabla u_\lambda^{(s)}|^2}} \right \|_{L^1(\Omega)} \leq C,$$
    for some $C>0$ uniform on $\lambda$. Now, coming back to \eqref{eq:weaksol}, from the LHS one has that
    $$\left | \int_\Omega \frac{\nabla u_\lambda^{(s)} \cdot \nabla \phi}{\sqrt{1-|\nabla u_\lambda^{(s)}|^2}} \, dx  \right | \leq \| \nabla \phi\|_{L^{\infty}(\Omega)}  \int_\Omega \frac{|\nabla u_\lambda^{(s)}|}{\sqrt{1-|\nabla u_\lambda^{(s)}|^2}} \, dx,  $$
    from which, using the elementary inequality
    $$\frac{s}{\sqrt{1-s^2}} \leq 2 \left( \frac{s^2}{\sqrt{1-s^2}} + 1 \right) \quad \text{for every } s\in[0,1),$$
    we deduce
    $$\left | \int_\Omega \frac{\nabla u_\lambda^{(s)} \cdot \nabla \phi}{\sqrt{1-|\nabla u_\lambda^{(s)}|^2}} \, dx  \right |  \leq C \|\nabla \phi\|_{L^{\infty}(\Omega)} \quad \text{for every } \phi \in C^1(\overline \Omega). $$
    Using this in \eqref{eq:weaksol} and dividing by $\lambda$ yields that
    $$\left | \int_\Omega a(x) |u_\lambda^{(s)}|^{p-2} u_\lambda^{(s)} \phi  \, dx\right | \leq \frac{C}{\lambda} \|\nabla \phi\|_{L^\infty(\Omega)} \quad \text{for every } \phi \in C^1(\overline \Omega).$$
    Then, given the compactness of $K_a$, and that $u_\lambda^{(s)} \in K_a$, we have that, up to a subsequence, $$u_\lambda^{(s)} \to u_\infty^{(s)} \quad \text{as } \lambda \to +\infty.$$
    Hence,
    $$\left | \int_\Omega a(x) |u_\infty^{(s)}|^{p-2} u_\infty^{(s)} \phi \, dx \right | = 0 \quad \text{for every } \phi \in C^1(\overline{\Omega}),$$
    which implies,
    $$ a(x) |u_\infty^{(s)}|^{p-2} u_\infty^{(s)} = 0 \quad \text{a.e. in } \Omega,$$
    which, in turn, thanks to $(a_0)$ and that $|u_\infty^{(s)}|^{p-2} \geq 0$, implies
    $$ u_\infty^{(s)} = 0 \quad \text{a.e. in } \Omega.$$
    Given that $u_\infty^{(s)} \in C(\overline{\Omega})$, it follows that $u_\infty^{(s)} \equiv 0$. Since the same argument applies to every subsequence of $\{u_\lambda^{(s)}\}$, every convergent subsequence has limit $0$, and therefore
    $$u_\lambda^{(s)}\rightarrow 0 \quad\text{in }C(\overline\Omega)$$
    as $\lambda\to+\infty$, proving the second part of \eqref{eq:clambdau0} and finishing the proof.
\end{proof}

Now that we have characterized both limiting profiles, we focus on studying the non-trivial limiting profile, that is, the one of the ground-state solutions (cf. Proposition \ref{prop:asGS}), via studying the variational maximization problem that characterizes it, namely \eqref{eq:varmax}.

\subsection{Properties of the ground-state's limiting profile}
In this section, we will be studying problem \eqref{eq:varmax} in the model case
$$g(u) = |u|^{p-2} u, \quad p>2.$$
The idea is to obtain information of the behaviour of $u_\infty$ through studying properties of the solutions of the maximization problem \eqref{eq:varmax}. To do so, note that, defining
\begin{equation} \label{eq:defJC} \mathcal{J}(u) := \int_\Omega a(x) |u|^{p} \, dx, \quad \mathcal{C}(u) := \int_\Omega a(x) |u|^{p-2}u \, dx,\end{equation}
problem \eqref{eq:varmax} can be written as
\begin{equation}\label{eq:P} \tag{P}  \max_{u \in W^{1,\infty}(\Omega)} \mathcal J(u) \quad \text{s.t.} \,\quad \mathcal{C}(u) = 0, \,   \|\nabla u \|_{L^{\infty}(\Omega)} \leq 1  .
\end{equation}
Clearly, the admissible set of \eqref{eq:P} is $K_a$, as defined in \eqref{eq:Ka}. 
\begin{lemma} \label{lemma:saturation}
    Any solution $u$ of \eqref{eq:P} satisfy the restriction $\|\nabla u \|_{L^{\infty}(\Omega)} = 1$. 
\end{lemma}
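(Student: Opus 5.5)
The plan is a scaling argument that exploits the homogeneity of both $\mathcal J$ and $\mathcal C$ in the model case $g(u)=|u|^{p-2}u$. Let $u$ be a solution of \eqref{eq:P}, and set $L:=\|\nabla u\|_{L^\infty(\Omega)}\in[0,1]$. Arguing by contradiction, I assume $L<1$ and build an admissible competitor with strictly larger value of $\mathcal J$.

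\textbf{Step 1: the maximal value is positive.} I first record that $\max_{K_a}\mathcal J>0$. By Theorem~\ref{thm:GS}, for any $\mu>\Lambda$ the ground state $u_\mu^{(l)}$ lies in $K_a$ and satisfies $I_\mu(u_\mu^{(l)})<0$. Since $\Psi\ge 0$ on $K$, this gives
$$\int_\Omega a(x)G(u_\mu^{(l)})\,dx>0,$$
exactly as observed at the end of the proof of Proposition~\ref{prop:asGS}. With $G(s)=|s|^p/p$ this means $\mathcal J(u_\mu^{(l)})>0$. Hence the maximizer satisfies $\mathcal J(u)>0$.

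\textbf{Step 2: $u$ is not constant.} I next exclude $L=0$. In that case $u\equiv c$ on the connected set $\Omega$. The constraint then reads $\mathcal C(u)=|c|^{p-2}c\int_\Omega a\,dx=0$. Since $\int_\Omega a<0$ by $(a_*)$, this forces $c=0$, so $\mathcal J(u)=0$, which contradicts Step 1. Thus $0<L<1$.

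\textbf{Step 3: scaling up.} Set $t:=1/L>1$ and $w:=tu$. Then $\|\nabla w\|_{L^\infty(\Omega)}=1$, so $w\in K$. The constraint is $(p-1)$-homogeneous, so
$$\mathcal C(w)=t^{p-1}\mathcal C(u)=0,$$
and therefore $w\in K_a$. The objective is $p$-homogeneous, so
$$\mathcal J(w)=t^p\mathcal J(u)>\mathcal J(u),$$
where the strict inequality uses $\mathcal J(u)>0$ from Step 1 and $t>1$. This contradicts the maximality of $u$, so $L=1$.

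The only point needing care is Step 1, namely ensuring that the maximum is strictly positive. Without it the scaling gives no strict improvement, and the constant case is not excluded. This positivity follows from the earlier results as indicated, so I expect no real obstacle. The homogeneity used in Step 3 is where the model nonlinearity is essential.
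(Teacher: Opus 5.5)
Your proposal is correct and matches the paper's proof step for step: positivity of the maximum via the ground states of Theorem~\ref{thm:GS}, exclusion of nonzero constants via $(a_*)$, and the rescaling by $t=1/\|\nabla u\|_{L^\infty(\Omega)}>1$ using the $(p-1)$-homogeneity of $\mathcal C$ and the $p$-homogeneity of $\mathcal J$. You also spell out why $\mathcal J(u_\mu^{(l)})>0$ (from $I_\mu(u_\mu^{(l)})<0$ and $\Psi\ge 0$), a step the paper only asserts.
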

\begin{proof}
     Let
    $$
    M:=\max_{v\in K_a}\mathcal{J}(v).
    $$
    By Theorem \ref{thm:GS}, for every $\lambda>\Lambda$ the ground-state
    solution $u_\lambda^{(l)}$ belongs to $K_a$ and satisfies
    $\mathcal{J}(u_\lambda^{(l)})>0$, hence $M>0$. Now, see that the constraint
    $$
    \mathcal{C}(u) = 0
    $$
    is invariant under scalar multiplication, due to the $(p-1)$-homogeneity of $g(u)$, and, moreover,
    $$
    \mathcal{J}(tu)=t^p\mathcal{J}(u).
    $$
    Let's suppose by contradiction that $\|\nabla u\|_{L^\infty(\Omega)}<1$. Since $\mathcal{J}(u)>0$, we have $u\not\equiv0$. Moreover, under $(a_\ast)$, $u$ cannot be a nonzero constant, hence $\|\nabla u\|_{L^\infty(\Omega)}>0$. Now, set
    $$
    t:=\frac{1}{\|\nabla u\|_{L^\infty(\Omega)}}>1.
    $$
    By the homogeneity of the constraint,
    $tu$ still satisfies
    $$
    \int_\Omega a(x)|tu|^{p-2}(tu)\,dx=0,
    $$
    while
    $$
    \|\nabla(tu)\|_{L^\infty(\Omega)}
    =t\|\nabla u\|_{L^\infty(\Omega)}=1.
    $$
    Thus $tu\in K_a$. On the other hand,
    $$
    \mathcal{J}(tu)
    =t^p\mathcal{J}(u)
    >\mathcal{J}(u),
    $$
    since $t>1$ and $\mathcal{J}(u)>0$. This contradicts the maximality of $u$, finishing the proof.
\end{proof}
\begin{remark}
    Note that the proof of this lemma only relies on the homogeneity of
    $g$ and $G$. Consequently, the same argument applies to any admissible
    nonlinearity with a suitable homogeneity property.
\end{remark}

Now that we know that the restriction
$\|\nabla u\|_{L^{\infty}(\Omega)}\leq 1$ is active for every maximizer of
\eqref{eq:P}, let us characterize the structure of the maximizers in
regions where this constraint is locally inactive. 
Lemma \ref{lemma:saturation} shows that the constraint
$\|\nabla u\|_{L^\infty(\Omega)}\leq1$ is always active, but does not
provide information on where it is saturated. For $u\in K_a$, let
\begin{equation}\label{eq:contact-set}
\Lambda_0 (u):=\{x\in\Omega:|\nabla u(x)|=1\}
\end{equation}
denote its \emph{contact set}, understood up to a null set through the
Lipschitz representative of $u$. Rather than considering pointwise inactive points, we work on open sets
$U\subset\Omega$ on which the gradient constraint is uniformly inactive,
that is,
$$
\|\nabla u\|_{L^\infty(U)}<1.
$$
On such a set, sufficiently small perturbations supported in $U$ preserve
the constraint $\|\nabla u\|_{L^\infty(\Omega)}\leq1$. Thus, in order to
construct admissible perturbations, it only remains to preserve the
global constraint $\mathcal C(u)=0$, which will be possible through a local perturbation argument, whenever $a(x) \not = 0 $ almost everywhere. This leads us to the following result.

\begin{proposition}
\label{prop:inactive-region}
Let $u \in K_a$ be a maximizer of \eqref{eq:P}. Let
$U\subset\Omega$ be a connected open set such that
$$
\|\nabla u\|_{L^\infty(U)}<1,
$$
and
$$
a(x)\neq 0
\qquad\text{for a.e. }x\in U.
$$
Then $u$ is constant in $U$. More precisely, there exists
$\mu\in\mathbb R$ independent of $U$, such that
$$u\equiv0
\qquad\text{or}\qquad
u\equiv\frac{p-1}{p}\mu
\quad\text{in }U.$$
Moreover, whenever $u\not\equiv0$ in $U$, the constant $\mu$ is
characterized by
$$
D\mathcal J(u)[\varphi]
=
\mu D\mathcal C(u)[\varphi]
\qquad
\text{for every }\varphi\in C_c^\infty(U),
$$
where $\mathcal J$ and $\mathcal C$ are as in \eqref{eq:defJC}.
\end{proposition}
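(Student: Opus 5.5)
The plan is a Lagrange multiplier argument carried out locally in $U$, followed by a pointwise reading of the resulting Euler--Lagrange identity. Since $p>2$, the maps $\mathcal J$ and $\mathcal C$ in \eqref{eq:defJC} are $C^1$ on $C(\overline\Omega)$, with
$$
D\mathcal J(u)[\varphi]=p\int_\Omega a|u|^{p-2}u\,\varphi\,dx,\qquad D\mathcal C(u)[\varphi]=(p-1)\int_\Omega a|u|^{p-2}\varphi\,dx .
$$
The admissibility of local perturbations comes from the hypothesis on $U$. Write $\|\nabla u\|_{L^\infty(U)}\le 1-\delta$ for some $\delta>0$. Then for any $\varphi,\psi\in C_c^\infty(U)$ and $|s|,|\tau|$ small enough (depending on $\|\nabla\varphi\|_\infty,\|\nabla\psi\|_\infty,\delta$), the function $u+s\varphi+\tau\psi$ is still in $K$. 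Indeed, it coincides with $u$ outside $U$, and its gradient stays below $1$ in $U$. So only the constraint $\mathcal C=0$ has to be restored.

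\textbf{Step 1 (degenerate case).} Suppose $D\mathcal C(u)[\psi]=0$ for all $\psi\in C_c^\infty(U)$. Then $a|u|^{p-2}=0$ a.e. in $U$. Since $a\neq0$ a.e. in $U$, this gives $u=0$ a.e. in $U$, and by continuity $u\equiv0$ in $U$.

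\textbf{Step 2 (nondegenerate case).} Otherwise, fix $\psi\in C_c^\infty(U)$ with $D\mathcal C(u)[\psi]=1$. For an arbitrary $\varphi\in C_c^\infty(U)$, set $F(s,\tau):=\mathcal C(u+s\varphi+\tau\psi)$. This is $C^1$ near $(0,0)$, with $F(0,0)=0$ and $\partial_\tau F(0,0)=1$. The implicit function theorem gives a $C^1$ curve $\tau(s)$ with $\tau(0)=0$, $\tau'(0)=-D\mathcal C(u)[\varphi]$, and $u+s\varphi+\tau(s)\psi\in K_a$ for small $|s|$.

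Maximality of $u$ in \eqref{eq:P} means $s\mapsto\mathcal J(u+s\varphi+\tau(s)\psi)$ has an interior maximum at $s=0$. Differentiating there yields
$$
D\mathcal J(u)[\varphi]=\mu\,D\mathcal C(u)[\varphi],\qquad \mu:=D\mathcal J(u)[\psi].
$$
This is the characterization of $\mu$ in the statement. Since $\varphi$ is arbitrary, it reads
$$
\int_U a\,|u|^{p-2}\bigl(pu-(p-1)\mu\bigr)\varphi\,dx=0\quad\text{for all }\varphi\in C_c^\infty(U),
$$
hence $a|u|^{p-2}(pu-(p-1)\mu)=0$ a.e. in $U$. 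Using $a\ne0$ a.e., we get $u(x)\in\{0,\tfrac{p-1}{p}\mu\}$ for a.e. $x\in U$, and then for every $x$ by continuity. A continuous function on a connected set taking values in a two-point set is constant. So $u\equiv0$ or $u\equiv\frac{p-1}{p}\mu$ in $U$. Neither Step 1 nor Step 2 uses connectedness of $U$; connectedness is needed only in this last step.

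\textbf{Step 3 (independence of $U$).} This is the step I expect to need the most care. Given two admissible connected sets $U_1,U_2$ on which $u\not\equiv0$, I would rerun Step 2 on the open set $U_1\cup U_2$. It still satisfies $\|\nabla u\|_{L^\infty(U_1\cup U_2)}<1$ and $a\ne0$ a.e., and connectedness was not used to derive the multiplier identity. This produces a single $\mu$ valid for test functions supported in either set. The multiplier obtained on each $U_i$ alone coincides with it, because the identity $D\mathcal J(u)=\mu D\mathcal C(u)$ on $C_c^\infty(U_i)$ determines $\mu$ uniquely once $D\mathcal C(u)\not\equiv0$ there. Hence the plateau value $\frac{p-1}{p}\mu$ is the same across all such $U$. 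This single value is the constant $C$ of Theorem~\ref{thm:intro}.
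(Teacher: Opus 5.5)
Your proposal is correct and follows essentially the same route as the paper: a local Lagrange multiplier argument via the implicit function theorem, using $\|\nabla u\|_{L^\infty(U)}<1$ to keep compactly supported perturbations inside $K$, then reading $a|u|^{p-2}(pu-(p-1)\mu)=0$ a.e.\ pointwise and concluding by continuity and connectedness of $U$. For the independence of $\mu$, the paper splits into two cases: overlapping sets, handled with test functions in $U_1\cap U_2$, and disjoint sets, handled by perturbing in $U_2$ and compensating with $\psi_1\in C_c^\infty(U_1)$; your rerun of Step~2 on $U_1\cup U_2$ covers both cases at once, and the paper's disjoint-case perturbation is exactly that argument.
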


\begin{proof}
To begin with, if $u\equiv0$ in $U$, then there is nothing to prove.
We therefore assume that $u\not\equiv0$ in $U$. Since $p>2$, the map
$s\mapsto |s|^{p-2}s$ belongs to $C^1(\mathbb R)$, and hence
$\mathcal C$ is of class $C^1$ on $W^{1,\infty}(\Omega)$. Its Fr\'echet
derivative is given by
$$
D\mathcal C(u)[\psi]
=
(p-1)\int_\Omega
a(x)|u|^{p-2}\psi\,dx.
$$
Similarly, for $\mathcal{J}$, we have
$$
D\mathcal J(u)[\psi]
=
p\int_\Omega
a(x)|u|^{p-2}u\,\psi\,dx.
$$
Since $u\not\equiv0$ in $U$ and $a\neq0$ a.e. in $U$, we have
$$
a(x)|u(x)|^{p-2}\not\equiv0
\qquad\text{in }U.
$$
Consequently, there exists
$$
\psi\in C_c^\infty(U)
$$
such that
\begin{equation}\label{eq:Cnotnull}
D\mathcal C(u)[\psi]\neq0.
\end{equation}
Indeed, otherwise
$$
\int_U a(x)|u|^{p-2}\psi\,dx=0
\qquad
\text{for every }\psi\in C_c^\infty(U),
$$
and this would imply
$$
a(x)|u(x)|^{p-2}=0
\qquad\text{a.e. in }U,
$$
a contradiction. Now fix an arbitrary $\varphi\in C_c^\infty(U)$ and consider
$$
F(t,s)
:=
\mathcal C(u+t\varphi+s\psi).
$$
Since
$$
F(0,0)=\mathcal C(u)=0
$$
and
$$
\partial_sF(0,0)
=
D\mathcal C(u)[\psi]\neq0,
$$
the implicit function theorem yields the existence of a $\varepsilon>0$ and a
$C^1$ function
$$
s:(-\varepsilon,\varepsilon)\to\mathbb R,
\qquad
s(0)=0,
$$
such that
$$
\mathcal C
\bigl(
u+t\varphi+s(t)\psi
\bigr)=0
$$
for every $|t|<\varepsilon$. Moreover, differentiating this identity at $t=0$ gives
$$
D\mathcal C(u)[\varphi]
+
s'(0)D\mathcal C(u)[\psi]
=0,
$$
and hence
\begin{equation} \label{eq:sprime}
s'(0)
=
-
\frac{
D\mathcal C(u)[\varphi]
}{
D\mathcal C(u)[\psi]
}.
\end{equation}
We claim that, after possibly reducing $\varepsilon$, the curve
$$
u_t:=u+t\varphi+s(t)\psi \in K_a \quad \text{for every } |t| < \varepsilon. 
$$
Indeed, since
$$
\left(\operatorname{supp}\varphi\cup\operatorname{supp}\psi \right)
\subset U
$$
and $\|\nabla u\|_{L^\infty(U)}<1$, there exists $\delta>0$ such that
$$
\|\nabla u\|_{L^\infty(U)}
\le 1-\delta.
$$
Since $s(0)=0$ and $s\in C^1$, we have $s(t)=O(t)$ as $t\to0$. Therefore, for $|t|$ sufficiently small,
$$
|s(t)|\leq C|t|
$$
for some $C>0$. Choosing $|t|$ so small that
$$
|t|\bigl(\|\nabla\varphi\|_{L^\infty(\Omega)}+C\|\nabla\psi\|_{L^\infty(\Omega)}\bigr)
\leq \frac{\delta}{2},
$$
we obtain
$$
\|\nabla u_t\|_{L^\infty(U)}
\leq 1-\frac{\delta}{2}<1.
$$
On the other hand, since $\operatorname{supp}\varphi\cup\operatorname{supp}\psi\subset U$, one has $u_t=u$ on $\Omega\setminus U$, and hence
$$
|\nabla u_t|=|\nabla u|\leq1
\qquad\text{a.e. in }\Omega\setminus U.
$$
Consequently $\|\nabla u_t\|_{L^\infty(\Omega)}\leq1$. Together with $\mathcal C(u_t)=0$, this proves that $u_t\in K_a$. Since $u$ is a maximizer of $\mathcal J$ on $K_a$, the function $t\longmapsto\mathcal J(u_t)$ has a local maximum at $t=0$. Hence
$$
0
=
\left.\frac{d}{dt}\right|_{t=0}
\mathcal J(u_t)
=
D\mathcal J(u)
\bigl[
\varphi+s'(0)\psi
\bigr].
$$
Using \eqref{eq:sprime}, we obtain
$$
D\mathcal J(u)[\varphi]
=
\mu
D\mathcal C(u)[\varphi] 
$$
where
\begin{equation} \label{eq:defmu}
\mu:=
\frac{
D\mathcal J(u)[\psi]
}{
D\mathcal C(u)[\psi]
} = \frac{p\int_\Omega a(x) |u|^{p-2}u \, \psi \,dx}{(p-1) \int_\Omega a(x) |u|^{p-2} \, \psi \, dx}.
\end{equation}
Since $\varphi\in C_c^\infty(U)$ was arbitrary,
$$
D\mathcal J(u)[\varphi]
=
\mu D\mathcal C(u)[\varphi]
\qquad
\forall\varphi\in C_c^\infty(U).
$$
Substituting the expressions for the derivatives yields
$$
p\int_U
a(x)|u|^{p-2}u\varphi\,dx
=
\mu(p-1)
\int_U
a(x)|u|^{p-2}\varphi\,dx
$$
for every $\varphi\in C_c^\infty(U)$. Therefore,
$$
a(x)|u|^{p-2}
\left(
pu-\mu(p-1)
\right)
=0
\qquad\text{a.e. in }U.
$$
Since $a\neq0$ a.e. in $U$, we conclude that
$$
|u|^{p-2}
\left(
pu-\mu(p-1)
\right)
=0
\qquad\text{a.e. in }U.
$$
Thus
$$
u(x)
\in
\left\{
0,\frac{p-1}{p}\mu
\right\}
\qquad\text{for a.e. }x\in U.
$$
Finally, due to $u\in W^{1,\infty}(\Omega)$, it admits a continuous representative. Then, considering that $U$ is connected, a continuous function which takes values only in the two-point set $\left\{
0,\frac{p-1}{p}\mu
\right\}
$ must be constant. Consequently,
$$
u\equiv0
\qquad\text{in }U,
$$
or
$$
u\equiv\frac{p-1}{p}\mu
\qquad\text{in }U,
$$
thus finishing this part of the proof. It remains to show that $\mu$ does not depend on $U$. Let $U_1, U_2$ be two open, connected subsets where  $u \not \equiv 0$ and $\|\nabla u\|_{L^\infty (U_i)} < 1$ for $i = 1, 2$, and let $\mu_1$ and $\mu_2$ be the respective constants given by the procedure above, applied in $U_1 $ and in $U_2$, respectively. If $U_1 \cap U_2 \not = \emptyset $, then, taking any $\varphi \in C_c^{\infty}(U_1 \cap U_2)$, one has
$$D \mathcal J (u) [\varphi] = \mu_1 D \mathcal{C}(u)[\varphi] \quad \text{and} \quad  D \mathcal J (u) [\varphi] = \mu_2 D \mathcal{C}(u)[\varphi]$$
which, by subtracting, yields
$$0 = (\mu_1 - \mu_2) D \mathcal{C}(u)[\varphi] \quad \forall \varphi \in C_c^{\infty}(U_1 \cap U_2),$$
and, since reasoning as before, we have that $D\mathcal C(u) \not \equiv 0$, it implies $\mu_1 = \mu_2$. So, let's assume that $U_1$ and $U_2$ are now disjoint, with associated
constants $\mu_1$ and $\mu_2$ and test functions
$\psi_1\in C_c^\infty(U_1)$ and $\psi_2\in C_c^\infty(U_2)$ as above,
satisfying \eqref{eq:Cnotnull}. Fix
$\varphi_2\in C_c^\infty(U_2)$, and this time define
$$
u_t:=u+t\varphi_2+s(t)\psi_1.
$$
The argument follows exactly as before, by applying the implicit function
theorem to the map
$$
F(t,s):=
\mathcal C(u+t\varphi_2+s\psi_1).
$$
Indeed,
$$
F(0,0)=\mathcal C(u)=0
$$
and
$$
\partial_sF(0,0)
=
D\mathcal C(u)[\psi_1]\neq0.
$$
Therefore, by the implicit function theorem, there exist
$\varepsilon>0$ and a $C^1$ function
$$
s:(-\varepsilon,\varepsilon)\to\mathbb R,
\qquad s(0)=0,
$$
such that
$$
\mathcal C(u+t\varphi_2+s(t)\psi_1)=0
$$
for every $|t|<\varepsilon$.

Moreover, since $U_1$ and $U_2$ are inactive regions and the perturbation is supported in $U_1\cup U_2$, the same local argument used above shows, after possibly reducing $\varepsilon$, that
$$
\|\nabla u_t\|_{L^\infty(U_1\cup U_2)}<1,
\qquad
|\nabla u_t|=|\nabla u|\leq1
\quad\text{a.e. in }\Omega\setminus(U_1\cup U_2).
$$
Hence $\|\nabla u_t\|_{L^\infty(\Omega)}\leq1$, and therefore $u_t\in K_a$ for every $|t|<\varepsilon$. Since $u$ is a maximizer of $\mathcal J$ on $K_a$, the function
$t\mapsto\mathcal J(u_t)$ has a local maximum at $t=0$. Consequently,
\begin{equation} \label{eq:4xx}
0
=
\left.\frac{d}{dt}\right|_{t=0}
\mathcal J(u_t)
=
D\mathcal J(u)[\varphi_2]
+s'(0)D\mathcal J(u)[\psi_1].
\end{equation}
On the other hand, differentiating
$$
\mathcal C(u+t\varphi_2+s(t)\psi_1)=0
$$
at $t=0$ gives
$$
D\mathcal C(u)[\varphi_2]
+s'(0)D\mathcal C(u)[\psi_1]
=0,
$$
and therefore
$$
s'(0)
=
-\frac{D\mathcal C(u)[\varphi_2]}
        {D\mathcal C(u)[\psi_1]}.
$$
Substituting this expression into \eqref{eq:4xx}, we obtain
$$
D\mathcal J(u)[\varphi_2]
=
\frac{D\mathcal J(u)[\psi_1]}
     {D\mathcal C(u)[\psi_1]}
D\mathcal C(u)[\varphi_2].
$$
By the definition of $\mu_1$,
$$
\mu_1
=
\frac{D\mathcal J(u)[\psi_1]}
     {D\mathcal C(u)[\psi_1]},
$$
and hence
$$
D\mathcal J(u)[\varphi_2]
=
\mu_1D\mathcal C(u)[\varphi_2]
\qquad
\forall\varphi_2\in C_c^\infty(U_2).
$$
However, by the definition of the multiplier $\mu_2$ associated with
$U_2$, we also have
$$
D\mathcal J(u)[\varphi_2]
=
\mu_2D\mathcal C(u)[\varphi_2]
\qquad
\forall\varphi_2\in C_c^\infty(U_2).
$$
Consequently,
\begin{equation} \label{eq:mu1mu2}
(\mu_1-\mu_2)
D\mathcal C(u)[\varphi_2]=0
\qquad
\forall\varphi_2\in C_c^\infty(U_2).
\end{equation}
But, once again,  by the definition of $\psi_2$ (cf. \eqref{eq:Cnotnull}), we know that
$$
D\mathcal C(u) \not \equiv 0,
$$
and so, \eqref{eq:mu1mu2} implies $\mu_1=\mu_2,$
finishing the proof.
\end{proof}
The proposition may be vacuous if $\Lambda_0 (u)$ has full measure, a
possibility that is not ruled out by Lemma \ref{lemma:saturation}.
However, whenever a nonempty inactive region exists, Proposition
\ref{prop:inactive-region} shows that the maximizer is constant on each
connected component of such a region, taking one of the two values $0$ or $\frac{p-1}{p}\mu$, where the multiplier $\mu$ is the same for all inactive regions. This plateau structure suggests viewing the contact set $\Lambda_0 (u)$ as
playing the role of a free boundary for the limiting problem. Finally, since $g(s)=|s|^{p-2}s$ is odd and $G(s)= \tfrac{|s|^p}{p}$ is even, if $u$
is a maximizer, then $|u|$ is also a maximizer. Hence maximizers of \eqref{eq:P} may be
chosen non-negative. In particular, if $u\not\equiv0$ on at least one
inactive region, then
$$
\frac{p-1}{p}\mu>0,
$$
and therefore $\mu>0$. These observations summarize the structure of the maximizers in the
regions where the gradient constraint is inactive, which clearly translates to the limiting profile $u_\infty$ as in \eqref{prop:asGS}. We collect this in the following corollary:

\begin{corollary}\label{cor:limitprofile-plateaus}
Let $u_\infty\geq 0$ be as in Proposition~\ref{prop:asGS}. Then $J(u_\infty)=M>0$,
and there exists $\mu\geq0$, independent of $U$, such that for every
connected open set $U\subset\Omega$ satisfying
\[
\|\nabla u_\infty\|_{L^\infty(U)}<1
\qquad\text{and}\qquad
a(x)\neq0 \quad\text{for a.e. }x\in U,
\]
one has either
\[
u_\infty\equiv0\quad\text{in }U,
\]
or
\[
u_\infty\equiv\frac{p-1}{p}\mu
\quad\text{in }U.
\]
Moreover, if $u_\infty\not\equiv0$ on at least one such set $U$, then
$\mu>0$.
\end{corollary}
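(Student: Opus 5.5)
The plan is to combine Proposition~\ref{prop:asGS} with Lemma~\ref{lemma:saturation} and Proposition~\ref{prop:inactive-region}, applied to the nonnegative maximizer $u_\infty$.

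First, by Proposition~\ref{prop:asGS}, $u_\infty\in K_a$ solves \eqref{eq:varmax}, which in the model case $G(s)=|s|^p/p$ is exactly problem \eqref{eq:P} up to the factor $1/p$. Hence $\mathcal J(u_\infty)=M:=\max_{K_a}\mathcal J$. Positivity of $M$ is shown as in the proof of Lemma~\ref{lemma:saturation}: for any $\mu>\Lambda$, the ground state $u_\mu^{(l)}\in K_a$ satisfies $I_\mu(u_\mu^{(l)})<0$. Since $\Psi\ge0$, this forces $\mathcal J(u_\mu^{(l)})>0$, so $M>0$. Nonnegativity of $u_\infty$ holds either because the ground states were chosen positive (Theorem~\ref{thm:GS}, $g$ odd) and uniform limits preserve sign, or simply by hypothesis.

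Next we define the multiplier. If no admissible set $U$ exists on which $u_\infty\not\equiv0$, set $\mu:=0$; the dichotomy then holds trivially, since on every admissible $U$ we have $u_\infty\equiv0$ by Proposition~\ref{prop:inactive-region}. Otherwise, fix one admissible $U_1$ with $u_\infty\not\equiv0$ on $U_1$ and let $\mu$ be the multiplier from \eqref{eq:defmu} built on $U_1$. Proposition~\ref{prop:inactive-region}, including its independence-of-$U$ part (both the overlapping and the disjoint cases), shows that on every admissible $U$ either $u_\infty\equiv0$ or $u_\infty\equiv\frac{p-1}{p}\mu'$ with $\mu'=\mu$.

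Finally, on $U_1$ we have $u_\infty\equiv\frac{p-1}{p}\mu$, $u_\infty\ge0$ and $u_\infty\not\equiv0$. Hence $\frac{p-1}{p}\mu>0$, and since $p>2$ this gives $\mu>0$. In every case $\mu\ge0$.

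The main (mild) obstacle is bookkeeping. One must make sure $\mu$ is well defined independently of the choice of $\psi$ and of $U_1$, which is exactly the independence statement in Proposition~\ref{prop:inactive-region}. One must also handle the degenerate case in which $u_\infty$ vanishes on all inactive regions, which is dealt with by the convention $\mu=0$.
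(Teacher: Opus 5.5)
Your proposal is correct and follows essentially the same route as the paper. The paper also obtains the corollary by applying Lemma~\ref{lemma:saturation} and Proposition~\ref{prop:inactive-region} (including the independence of the multiplier across inactive regions) to the nonnegative maximizer $u_\infty$ of Proposition~\ref{prop:asGS}, then reads off $\mu>0$ from $u_\infty\geq0$ and $u_\infty\not\equiv0$ on some plateau, while your convention $\mu:=0$ when $u_\infty$ vanishes on every admissible $U$ is harmless bookkeeping that the paper leaves implicit in ``there exists $\mu\geq0$''.
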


As a final remark regarding the limiting profile $u_\infty$: although we have not proven it, one would expect that, whenever $|\Lambda_0(u_\infty)| < |\Omega|$, the two-level structure of Corollary \ref{cor:limitprofile-plateaus} to organize itself according to the sign of $a$, that is, plateaus at height $\frac{p-1}{p}\mu$ entirely contained in $\{a>0\}$, connected through the contact set $\Lambda_0 (u_\infty)$ to plateaus at height $0$ entirely contained in $\{a<0\}$. The heuristic reason is that, since $u_\infty$ maximizes $\int_\Omega a(x)G(u)\,dx$ over $K_a$ (cf.\ \eqref{eq:varmax}) and $G$ is even and positive away from the origin, a positive-height plateau sitting inside $\{a<0\}$ contributes negatively to this integral, and one would expect replacing it by a zero-plateau to increase the functional. Turning this into a proof, however, requires modifying $u_\infty$ on such a plateau while preserving the constraint $\mathcal C(u)=0$ elsewhere in $\Omega \setminus \{a=0\}$, and since $\mathcal{J}$ is homogeneous of degree $p$ in $u$ while $\mathcal{C}$ is only homogeneous of degree $p-1$, the gain from the local replacement and the cost of the compensating adjustment scale differently, giving no control on the difference. 

The matter of whether the contact set $\Lambda_0(u)$ is of full measure or not seems to be closely related to the shape of the indefinite weight $a(x)$. To illustrate that the degenerate scenario left open above is not merely a
theoretical possibility, we close with a one-dimensional toy-example exhibiting
an admissible competitor with full contact set and strictly positive
functional value.

\begin{example} \label{ex:final}
Take $N=1$ and $\Omega=(-1,1)$, outside the standing hypothesis $N\ge 2$, but illustrative of the underlying variational mechanism
in $(P)$ alone, with the even, indefinite weight
$$
a(x)=
\begin{cases}
1, & |x|>\tfrac12,\\
-2, & |x|\leq\tfrac12.
\end{cases}
$$
Hypothesis $(a^*)$ holds, since $\int_{-1}^1 a\,dx = 2\cdot\tfrac12-2\cdot 1=-1<0$,
and so does $(a^+)$, since $\{a>0\}=(-1,-\tfrac12)\cup(\tfrac12,1)$ has
nonempty interior.

Let $u(x)=x$. Then $u\in W^{1,\infty}(\Omega)$ with $|u'|\equiv 1$ a.e.\ in
$\Omega$, so $u\in K$ and $\Lambda_0(u)=\Omega$: the contact set already has
full measure. Since $a$ is even and $s\mapsto |s|^{p-2}s$ is odd, the
integrand $a(x)|u(x)|^{p-2}u(x)$ is odd, hence
$$
\mathcal C(u)=\int_{-1}^1 a(x)|x|^{p-2}x\,dx=0,
$$
so $u\in K_a$ is admissible for $(P)$. A direct computation gives
$$
\mathcal J(u)=\int_{-1}^1 a(x)|x|^p\,dx
=2\int_{1/2}^1 x^p\,dx-4\int_0^{1/2}x^p\,dx
=\frac{2}{p+1}\Big(1-3\cdot 2^{-(p+1)}\Big)>0
$$
for every $p>2$, since $2^{p+1}\ge 8>3$.
\end{example}

Thus $u$ provides a non-trivial admissible competitor for \eqref{eq:P} with
$|\Lambda_0(u)|=|\Omega|$ and $\mathcal J(u)>0$. In particular, this shows that a full saturation of
the gradient constraint is not excluded at the level of admissible functions, hence hinting that ruling out
$|\Lambda_0(u_\infty)|=|\Omega|$ cannot follow merely from the structure of
the problem. The relevant question is instead whether such a fully saturated
configuration can actually maximize $\mathcal J$ over $K_a$. Determining whether this can occur, and more generally understanding how the
geometry of $a$ influences the size and structure of
$\Lambda_0(u_\infty)$, remains an interesting open question.
\section*{Acknowledgments}
The author is grateful to A. Boscaggin for suggesting the problem and, together with F. Colasuonno and D. Céspedes, for carefully reading the manuscript, and to L. Maniscalco for helpful discussions.

\appendix

\section{An abstract retraction principle for nonsmooth mountain-pass critical points}
\label{app:mountain-pass}

In this appendix, we recall the mountain pass theorem for
lower semicontinuous functionals due to Szulkin \cite{Sz86}, and
establish an abstract projection principle which will be used in the
proof of the positivity of the mountain pass solution for the particular projection $p(x) = |x|$, inspired by the variational arguments developed in \cite{BCN95}.

\medskip 

Let $(X,\| \cdot \|)$ be a real Banach space and consider a functional
$$
I=\Psi+\Phi:X\rightarrow(-\infty,+\infty],
$$
where $\Psi:X\to(-\infty,+\infty]$ is proper, convex and lower
semicontinuous, while $\Phi\in C^1(X,\mathbb R)$. We recall that
$u\in X$ is a critical point of $I$ if $u\in D(\Psi)$ and
$$
\Phi'(u)(v-u)+\Psi(v)-\Psi(u)\geq 0
\qquad\text{for every }v\in X.
$$
We first recall the version of the mountain pass theorem that is
suitable for the above class of functionals.

\begin{theorem}[{\cite[Theorem 3.2]{Sz86}}]
\label{thm:szulkin-mountain-pass}
Assume that $I=\Psi+\Phi$ satisfies the above hypotheses and the
Palais--Smale condition (cf. Definition \ref{def:PS}). Suppose that
$$
I(0)=0
$$
and that there exist $\rho,\alpha>0$ such that
$$
I(u)\geq\alpha
\qquad\text{for every } \|u\| = \rho,
$$
and that there exists $e\in X\setminus B_\rho(0)$ such that
$$
I(e)\leq 0.
$$
Then $I$ possesses a critical point $u$ at a critical level
$$
c\geq\alpha,
$$
where
$$
c=\inf_{\gamma\in\Gamma}\sup_{t\in[0,1]}I(\gamma(t)),
$$
and
$$
\Gamma
=
\left\{
\gamma\in C([0,1],X):
\gamma(0)=0,\ \gamma(1)=e
\right\}.
$$
\end{theorem}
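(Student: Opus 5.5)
The statement is Szulkin's nonsmooth mountain pass theorem, and I would follow the classical route: apply Ekeland's variational principle on the space of paths, then use the convexity of $\Psi$ as a substitute for a deformation lemma. Endow $\Gamma$ with the uniform metric $d(\gamma_1,\gamma_2)=\max_{t\in[0,1]}\|\gamma_1(t)-\gamma_2(t)\|$, which makes it a complete metric space. Define $F(\gamma):=\sup_{t\in[0,1]} I(\gamma(t))$.

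\textbf{Step 1: $F$ is admissible for Ekeland and $\alpha\le c<\infty$.} Each $\gamma\mapsto I(\gamma(t))$ is lower semicontinuous, so $F$ is lower semicontinuous on $\Gamma$ as a supremum of such maps. Since $I(e)\le 0$, we have $e\in D(\Psi)$; also $0\in D(\Psi)$. The segment $\gamma_0(t)=te$ therefore satisfies $\Psi(te)\le (1-t)\Psi(0)+t\Psi(e)$ by convexity, while $\Phi$ is bounded on it. Hence $c\le F(\gamma_0)<\infty$. Every $\gamma\in\Gamma$ meets the sphere $\{\|u\|=\rho\}$ by continuity, because $\|\gamma(0)\|=0<\rho<\|e\|$. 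This gives $F(\gamma)\ge\alpha$ and so $c\ge\alpha$. Ekeland's principle then yields paths $\gamma_n\in\Gamma$ with
\[
F(\gamma_n)\le c+\varepsilon_n^2,\qquad F(\gamma)\ge F(\gamma_n)-\varepsilon_n\, d(\gamma,\gamma_n)\quad\text{for all }\gamma\in\Gamma,
\]
where $\varepsilon_n\to0$ and $\varepsilon_n^2<\alpha/2$.

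\textbf{Step 2 (main obstacle): extracting a Palais--Smale sequence.} I claim that for each $n$ there is $t_n$ with $I(\gamma_n(t_n))\ge c-\varepsilon_n^2$ such that $u_n:=\gamma_n(t_n)$ satisfies \eqref{ineq:PS} with some constant $C\varepsilon_n$. Argue by contradiction. Let $T_n:=\{t: I(\gamma_n(t))\ge c-\varepsilon_n^2\}$; since $c-\varepsilon_n^2>0$, this set stays away from the endpoints. Suppose that for each $t\in T_n$ there is $v_t\in D(\Psi)$ with
\[
\Psi(v_t)-\Psi(\gamma_n(t))+\Phi'(\gamma_n(t))(v_t-\gamma_n(t))<-C\varepsilon_n\|v_t-\gamma_n(t)\|.
\]
We may normalize $\|v_t-\gamma_n(t)\|$ to be comparable to a fixed small size. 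Lower semicontinuity of $\Psi$ is on the favourable side here: it makes $\tau\mapsto\Psi(\gamma_n(\tau))$ lower semicontinuous, so a strict upper bound on $-\Psi(\gamma_n(\tau))$ persists near $t$. Continuity of $\Phi'$ and of $\gamma_n$ then shows the inequality persists for $\tau$ near $t$. Compactness of $T_n$ and a partition of unity $\{\beta_j\}$ subordinate to such neighbourhoods, multiplied by a cutoff vanishing near $0,1$, let us define
\[
\tilde\gamma(\tau)=\gamma_n(\tau)+s\sum_j\beta_j(\tau)\bigl(v_{t_j}-\gamma_n(\tau)\bigr)
\]
for small $s>0$. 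This is a path in $\Gamma$, and it lies in $D(\Psi)$ because $D(\Psi)$ is convex.

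The convexity of $\Psi$ gives $\Psi(\tilde\gamma(\tau))\le(1-s)\Psi(\gamma_n(\tau))+s\sum_j\beta_j\Psi(v_{t_j})$, and $\Phi$ is expanded to first order in $s$. Together these show that $I$ decreases along $\tilde\gamma$ on $T_n$ by more than $\varepsilon_n d(\tilde\gamma,\gamma_n)$. Off $T_n$ the sup is not affected for $s$ small. This contradicts Ekeland's inequality. The technical heart is making the first-order error terms in $s$ uniform in $\tau$, which is why the construction uses finitely many points and compactness.

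\textbf{Step 3: passing to the limit.} The resulting $u_n$ satisfy $I(u_n)\to c$ and \eqref{ineq:PS}, so by the Palais--Smale condition $u_n\to u$ along a subsequence. Taking $v=u$ in \eqref{ineq:PS} gives $\limsup\Psi(u_n)\le\Psi(u)$. Combined with lower semicontinuity this yields $\Psi(u_n)\to\Psi(u)$, and hence $I(u)=c\ge\alpha$. Finally, passing to the limit in \eqref{ineq:PS} for arbitrary fixed $v$, using continuity of $\Phi'$ and convergence of $\Psi(u_n)$, shows that $u$ is a critical point.
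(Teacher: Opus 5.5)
Your Steps 1 and 3 are sound; in particular, testing \eqref{ineq:PS} with $v=u$ to get $\Psi(u_n)\to\Psi(u)$ is exactly right. The overall architecture (Ekeland on a space of paths plus a convexity-based local deformation) is that of Szulkin's proof. The paper cites that proof rather than reproving it, and reuses its structure in the proof of Proposition~\ref{prop:abstract-positivity}.

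The gap is in Step 2, precisely where the nonsmooth case departs from the $C^1$ one. Lower semicontinuity helps you in the persistence of the witness inequality, but works against you here. Since $I$ is only lower semicontinuous, so is $h_n(\tau):=I(\gamma_n(\tau))$, and its superlevel set $T_n=\{h_n\ge c-\varepsilon_n^2\}$ need not be closed. It may also accumulate at $0$ or $1$: lower semicontinuity at $\tau=0$ is compatible with $h_n(\tau)$ staying close to $F(\gamma_n)$ as $\tau\to0^+$ while $h_n(0)=0$. This is not an artefact. In the paper's setting $\Psi$ is not continuous on $K$ for the sup norm: zigzags of fixed slope and vanishing amplitude converge uniformly to $0$ while $\Psi$ stays bounded away from $0$.

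So the assertions that $T_n$ ``stays away from the endpoints'' and is compact are false in general, and the argument breaks in both places where you use them:
\begin{itemize}
\item A cutoff vanishing near $\{0,1\}$ leaves near-maximal points unperturbed. Then $\sup_\tau I(\tilde\gamma(\tau))$ need not decrease, and Ekeland's inequality is not contradicted.
\item At an interior point $t^*\in\overline{T_n}\setminus T_n$ your contradiction hypothesis provides no witness. So there is no finite subcover, and the first-order errors and admissible step sizes cannot be made uniform.
\end{itemize}
Put differently, the high set $\{h_n\ge c-\varepsilon_n^2\}$ and the closed low set $\{h_n\le c-2\varepsilon_n^2\}$, which contains $0$ and $1$, may have intersecting closures. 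No continuous cutoff separates them.

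The missing idea is how to handle such points $t^*$, including $t^*\in\{0,1\}$. One repair keeps your fixed-endpoint scheme. Set $\delta:=c-\varepsilon_n^2-h_n(t^*)>0$. For $\tau$ near $t^*$ with $h_n(\tau)\ge c-\varepsilon_n^2$ one has
\[
\Psi(\gamma_n(t^*))-\Psi(\gamma_n(\tau))+\Phi'(\gamma_n(\tau))\bigl(\gamma_n(t^*)-\gamma_n(\tau)\bigr)\le-\delta+o(1)\qquad(\tau\to t^*),
\]
while $\|\gamma_n(t^*)-\gamma_n(\tau)\|\to0$. So $\gamma_n(t^*)$ itself is a descent target with an arbitrarily large rate. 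The step $s\beta(\tau)\bigl(\gamma_n(t^*)-\gamma_n(\tau)\bigr)$ vanishes at $\tau=t^*$, so no cutoff is needed at the endpoints. Lower semicontinuity of $\Psi\circ\gamma_n$ at $t^*$, together with convexity, bounds the increase at nearby low points. With these extra targets you can cover the compact set $\overline{T_n}$ and run your argument.

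Szulkin's route, mirrored in the appendix via $\Gamma_1^p$, handles this differently:
\begin{itemize}
\item It lets the endpoints move inside $W_0\cap I^{c-\varepsilon/2}$ and $W_e\cap I^{c-\varepsilon/2}$, where $W_0,W_e$ are the path components of $I^{c-\varepsilon/4}$ containing $0$ and $e$.
\item It deforms the whole compact image $f([0,1])$ with \cite[Proposition 2.3]{Sz86}, whose bound $I(\alpha_\tau(u))-I(u)\le2\tau$ keeps the moved endpoints admissible.
\item It argues by contradiction on $K_c$, through a neighbourhood $N$ of $K_c$, rather than extracting a Palais--Smale sequence.
\end{itemize}

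A smaller point concerns normalization. Convexity only allows you to shrink a witness toward $\gamma_n(t)$, not to enlarge it, so you cannot normalize $\|v_t-\gamma_n(t)\|$ to a prescribed size. Once you have finitely many witnesses, set $r:=\min_j\|v_{t_j}-\gamma_n(t_j)\|$ and rescale the $j$-th step by $r/\|v_{t_j}-\gamma_n(t_j)\|$; convexity preserves the strict inequality. Also take the constant $C$ strictly larger than $1$.
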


Its proof relies on the deformation result of Szulkin (cf. \cite[Proposition 2.3]{Sz86}) together with
Ekeland's variational principle. We shall use this deformation argument precisely to extend, for non-smooth functionals, the abstract result presented in \cite[Theorem 10]{BCN95}, which was there used for finding positive solutions for a semilinear problem with an indefinite weight. The main difference is that, since $I$ is not assumed
to be of class $C^1$, we use Szulkin's deformation theorem instead of
a deformation generated by a pseudogradient vector field.

\begin{proposition}
\label{prop:abstract-positivity}
Assume that $I=\Psi+\Phi$ satisfies the hypotheses of
Theorem~\ref{thm:szulkin-mountain-pass}. Let
$$
p:X\rightarrow X
$$
be a continuous map satisfying
$$
p(0)=0,\qquad p(e)=e,\qquad p\circ p=p,
$$
and
\begin{equation}\label{eq:nonexp}
\|p(u)-p(v)\|\leq\|u-v\|
\qquad\text{for every }u,v\in X.
\end{equation}
Assume, moreover, that
$$
I(p(u))\leq I(u)
\qquad\text{for every }u\in X.
$$
Then the mountain pass level
$$
c=\inf_{\gamma\in\Gamma}\sup_{t\in[0,1]}I(\gamma(t))
$$
is attained by a critical point $u\in p(X)$. 
\end{proposition}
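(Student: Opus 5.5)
We argue by contradiction, in the spirit of \cite[Theorem 10]{BCN95}, with Szulkin's deformation lemma (cf. \cite[Proposition 2.3]{Sz86}) in place of a pseudogradient flow. First note that $p\circ\gamma\in\Gamma$ for every $\gamma\in\Gamma$: it is continuous because $p$ is, and $p(\gamma(0))=p(0)=0$, $p(\gamma(1))=p(e)=e$. Since $I(p(u))\le I(u)$, we get $\sup_t I(p\circ\gamma(t))\le\sup_t I(\gamma(t))$. Hence
\[
c=\inf_{\gamma\in\Gamma,\ \gamma([0,1])\subset p(X)}\ \sup_{t\in[0,1]} I(\gamma(t)),
\]
so the mountain-pass level may be computed along paths lying in $p(X)$. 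Let $K_c:=\{u:\ u \text{ critical},\ I(u)=c\}$, which is nonempty and compact by Theorem~\ref{thm:szulkin-mountain-pass} and (PS). Suppose, for contradiction, that $K_c\cap p(X)=\emptyset$.

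\textbf{Step 1: separation.} We claim $\delta:=\operatorname{dist}(p(X),K_c)>0$. The key point is that $p(X)$ is closed. Indeed, $p(X)=\{u:\ p(u)=u\}$, because $p\circ p=p$, and this fixed-point set is closed by continuity of $p$. A compact set disjoint from a closed set lies at positive distance from it.

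\textbf{Step 2: deformation.} Let $N:=\{u:\ \operatorname{dist}(u,K_c)<\delta/2\}$, a neighbourhood of $K_c$ disjoint from the $\delta/2$-neighbourhood of $p(X)$. Szulkin's deformation lemma, which uses (PS) and the $\Psi+\Phi$ structure, provides $\varepsilon\in(0,\alpha/2)$ and a deformation $\eta\colon[0,1]\times X\to X$ with the following properties: $\eta(t,\cdot)$ is the identity outside $I^{-1}([c-2\varepsilon,c+2\varepsilon])$, so endpoints with $I\le 0<c-2\varepsilon$ are fixed; $\|\eta(t,u)-u\|\le t\cdot(\text{small})$, say $\le\delta/4$; and $\eta(1,\cdot)$ maps $I^{c+\varepsilon}\setminus N$ into $I^{c-\varepsilon}$.

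Choose a path $\gamma$ in $p(X)$ with $\sup I\circ\gamma\le c+\varepsilon$; this is possible by the reformulation of $c$ above. Its image lies in $p(X)$, hence outside $N$. Therefore $\tilde\gamma:=\eta(1,\gamma)\in\Gamma$ satisfies $\sup I\circ\tilde\gamma\le c-\varepsilon$, which contradicts the definition of $c$. Hence $K_c\cap p(X)\neq\emptyset$.

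\textbf{Main obstacle.} The delicate point is the exact form of Szulkin's deformation lemma. In \cite{Sz86} it is stated for lower semicontinuous functionals with a neighbourhood $N$ of $K_c$, and it yields a displacement bound of the form $\|\eta(t,u)-u\|\le t$ after rescaling, together with the property that points with $I(u)\le c-2\varepsilon$ are fixed (or at least that $I$ is nonincreasing along $\eta$). The property we actually need is that the deformation lowers the level of paths whose images avoid $N$, even though it may move them. I will quote \cite[Proposition 2.3]{Sz86} in this form and check two things. First, $\eta(1,\cdot)\circ\gamma$ is continuous. Second, its endpoints remain $0$ and $e$, because $I(0)=0$ and $I(e)\le0$ lie below $c-2\varepsilon$ once $\varepsilon<\alpha/2$.

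Nonexpansiveness \eqref{eq:nonexp} is used to guarantee that the reduction to paths in $p(X)$ is compatible with this distance-based separation. Alternatively, one can project the deformed path back by $p$: this does not increase energy, and it stays controlled in norm relative to $p(X)$.
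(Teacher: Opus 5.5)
Your preliminary steps agree with the paper's. Paths can be projected by $p$, so $c$ is also the minimax level over paths in $p(X)$; $p(X)=\operatorname{Fix}(p)$ is closed; and under the contradiction hypothesis the compact set $K_c$ is at positive distance from $p(X)$.

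\textbf{The gap is in Step 2.} The deformation you invoke is the classical $C^1$ deformation lemma: a map $\eta(1,\cdot)$ sending $I^{c+\varepsilon}\setminus N$ into $I^{c-\varepsilon}$ and equal to the identity outside a level strip. That is not what \cite[Proposition 2.3]{Sz86} provides. Szulkin's result is only a local, infinitesimal deformation. For a compact $A\subset X\setminus N$ with $c\le\sup_A I\le c+\varepsilon$, it gives some $s_0>0$ depending on $A$, a neighbourhood $W$ of $A$, and maps $\alpha_\tau\colon W\to X$, $0\le\tau\le s_0$, with $\|\alpha_\tau(u)-u\|\le\tau$ and $\sup_A I\circ\alpha_\tau\le\sup_A I-2\varepsilon\tau$. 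So the level drops only by $2\varepsilon\tau$, with $\tau\le s_0$ possibly tiny, and nothing guarantees that $0$ and $e$ stay fixed. For a merely lower semicontinuous $I=\Psi+\Phi$, \cite{Sz86} gives no global deformation of sublevel sets, which is why Theorem~\ref{thm:szulkin-mountain-pass} is proved there via Ekeland's principle. Iterating the local deformation does not repair this, because the admissible step sizes may shrink and the total decrease may never reach $2\varepsilon$. In the $C^1$ setting your argument would close, and it would not even need nonexpansiveness. That nonexpansiveness plays no role in your argument is itself a sign that the nonsmooth mechanism is missing.

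\textbf{What is missing} is an Ekeland argument on a complete metric space of paths lying in $p(X)$; this is where the hypotheses on $p$ actually enter. The paper proceeds as follows.
\begin{enumerate}
\item It works with $\Gamma_1^p$: paths in $p(X)$ whose endpoints lie in $I^{c-\varepsilon/2}$ and in the path components of $I^{c-\varepsilon/4}$ containing $0$ and $e$. The relaxed endpoints are needed precisely because $\alpha_\tau$ may move $0$ and $e$.
\item It shows that $\Gamma_1^p$ is complete for the uniform metric and still has minimax level $c$.
\item Ekeland's principle then yields $f\in\Gamma_1^p$ with $c\le\Pi(f)\le c+\varepsilon$ and $\Pi(g)-\Pi(f)\ge-\varepsilon\, d(f,g)$ for all $g\in\Gamma_1^p$, where $\Pi(\gamma)=\sup_t I(\gamma(t))$.
\item Since $f([0,1])\subset p(X)\subset X\setminus N$, the local deformation applies to $A=f([0,1])$.
\item The competitor $f_s:=p\circ\alpha_s\circ f$ lies in $p(X)$. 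It satisfies $d(f,f_s)\le s$ by nonexpansiveness, because $f(t)=p(f(t))$. It also satisfies $\Pi(f_s)\le\Pi(f)-2\varepsilon s$, because $I\circ p\le I$.
\item After checking that the endpoints of $f_s$ remain admissible, this contradicts $\Pi(f_s)-\Pi(f)\ge-\varepsilon s$.
\end{enumerate}
So projecting the deformed path back by $p$ is not an optional alternative, as your last paragraph suggests. It is the step that makes the deformed path an admissible competitor in Ekeland's inequality.
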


\begin{proof}
By Theorem~\ref{thm:szulkin-mountain-pass}, the mountain pass level
$c$ is a critical value of $I$. We shall prove that it can be attained
by a critical point belonging to $p(X)$. Since $p\circ p=p$, we have
$$
p(X)=\operatorname{Fix}(p)
:=\{u\in X:p(u)=u\},
$$
which is closed by the continuity of $p$. Define
$$
\Gamma_p
=
\left\{
\gamma\in C([0,1],p(X)):
\gamma(0)=0,\ \gamma(1)=e
\right\}.
$$
For every $\gamma\in\Gamma$, the path $p\circ\gamma$ belongs to
$\Gamma_p$ and, by hypothesis,
$$
I(p(\gamma(t)))\leq I(\gamma(t))
\qquad\text{for every }t\in[0,1].
$$
Hence
$$
\inf_{\gamma\in\Gamma_p}\sup_{t\in[0,1]}I(\gamma(t))
\leq
\inf_{\gamma\in\Gamma}\sup_{t\in[0,1]}I(\gamma(t))=c.
$$
The reverse inequality follows from $\Gamma_p\subset\Gamma$. Therefore,
\begin{equation}\label{eq:cgammap}
c=
\inf_{\gamma\in\Gamma_p}\sup_{t\in[0,1]}I(\gamma(t)).
\end{equation}

We argue by contradiction and assume that
\begin{equation}\label{eq:HContr}
K_c\cap p(X)=\varnothing,
\end{equation}
where
$$
K_c:=\{u\in X:I(u)=c,\ u\text{ is a critical point of }I\}.
$$
By the (PS)-condition, $K_c$ is compact. Since $p(X)$ is closed and
$K_c\cap p(X)=\varnothing$, we have
\begin{equation}\label{eq:distKc}
d_0:=\operatorname{dist}(K_c,p(X))>0.
\end{equation}
Set
$$
N:=\{u\in X:\operatorname{dist}(u,K_c)<d_0/2\}.
$$
Then $N$ is a neighbourhood of $K_c$ and
\begin{equation}\label{eq:Ndisjoint}
N\cap p(X)=\varnothing.
\end{equation}

Choose $\varepsilon_0\in(0,\min\{c,1\})$. By Szulkin's deformation result
\cite[Proposition 2.3]{Sz86}, applied to the neighbourhood $N$ of
$K_c$, there exists $\varepsilon\in(0,\varepsilon_0)$ with the
following property: every compact set $A\subset X\setminus N$ satisfying
$$
c\leq \sup_{u\in A}I(u)\leq c+\varepsilon
$$
admits the deformation furnished by that proposition. Let
$$
I^{c-\varepsilon/4}:=
\{u\in X:I(u)\leq c-\varepsilon/4\}.
$$
Since $c$ is the mountain pass level, $0$ and $e$ cannot belong to the
same path component of $I^{c-\varepsilon/4}$. Let $W_0$ and $W_e$
denote the path components of $I^{c-\varepsilon/4}$ containing $0$
and $e$, respectively. Define
$$
\Gamma_1^p
=
\left\{
\gamma\in C([0,1],p(X)):
\begin{array}{l}
\gamma(0)\in W_0\cap I^{c-\varepsilon/2},\\
\gamma(1)\in W_e\cap I^{c-\varepsilon/2}
\end{array}
\right\}.
$$
Since $0\in W_0$, $e\in W_e$, and
$$
I(0)=0<c-\varepsilon/2,
\qquad
I(e)\leq0<c-\varepsilon/2,
$$
we have $\Gamma_p\subset\Gamma_1^p$, and in particular
$\Gamma_1^p\neq\varnothing$. Set
$$
c_p:=
\inf_{\gamma\in\Gamma_1^p}
\sup_{t\in[0,1]}I(\gamma(t)).
$$
We claim that
\begin{equation}\label{eq:cpc}
c_p=c.
\end{equation}
Indeed, since $\Gamma_p\subset\Gamma_1^p$, \eqref{eq:cgammap} gives
$c_p\leq c$. Conversely, if some $\gamma\in\Gamma_1^p$ satisfies
$\sup_t I(\gamma(t))<c$, then joining $\gamma(0)$ to $0$ inside
$W_0\subset I^{c-\varepsilon/4}$ and $\gamma(1)$ to $e$ inside
$W_e\subset I^{c-\varepsilon/4}$ would produce a path in $\Gamma$
whose energy is everywhere strictly below $c$, a contradiction.
Thus $c\leq c_p$, proving \eqref{eq:cpc}.

We next verify that $\Gamma_1^p$, endowed with the uniform metric
$$
d(\gamma_1,\gamma_2)
:=\sup_{t\in[0,1]}\|\gamma_1(t)-\gamma_2(t)\|,
$$
is complete. Since $p(X)$ is closed, $C([0,1],p(X))$ is complete.
It therefore suffices to show that $\Gamma_1^p$ is closed in this
space. Let $\gamma_n\in\Gamma_1^p$ and $\gamma_n\to\gamma$ uniformly.
By the lower semicontinuity of $I$,
$$
I(\gamma(0))\leq c-\varepsilon/2,
\qquad
I(\gamma(1))\leq c-\varepsilon/2.
$$
We show that $\gamma(0)\in W_0$; the argument at $t=1$ is identical.
Set $u_n:=\gamma_n(0)$ and $u:=\gamma(0)$. For
$z_{n,t}:=tu_n+(1-t)u$, convexity of $\Psi$ gives
$$
\Psi(z_{n,t})\leq t\Psi(u_n)+(1-t)\Psi(u).
$$
Since $u_n\to u$ and $\Phi$ is continuous,
$$
\sup_{t\in[0,1]}
\big|\Phi(z_{n,t})-t\Phi(u_n)-(1-t)\Phi(u)\big|\rightarrow0.
$$
Consequently,
$$
\sup_{t\in[0,1]}I(z_{n,t})
\leq c-\varepsilon/2+o(1)
\leq c-\varepsilon/4
$$
for $n$ sufficiently large. Hence the segment joining $u_n$ to $u$
is contained in $I^{c-\varepsilon/4}$. Since $u_n\in W_0$, it follows
that $u\in W_0$. Thus $\Gamma_1^p$ is closed and therefore complete.
Moreover, the functional
$$
\Pi(\gamma):=\sup_{t\in[0,1]}I(\gamma(t))
$$
is lower semicontinuous on $\Gamma_1^p$. By \eqref{eq:cpc}, choose $f_0\in\Gamma_1^p$ such that $\Pi(f_0)\leq c+\varepsilon^2$. Applying Ekeland's variational principle
(cf. \cite[Proposition 1.6]{Sz86}) with parameters $\delta=\varepsilon$ and $\lambda=1$, we obtain $f\in\Gamma_1^p$ such that
\begin{equation}\label{eq:pif}
c\leq\Pi(f)\leq c+\varepsilon
\end{equation}
and
\begin{equation}\label{eq:diffPi}
\Pi(g)-\Pi(f)
\geq-\varepsilon d(f,g)
\qquad\text{for every }g\in\Gamma_1^p.
\end{equation}
Set $A:=f([0,1])$. Then $A$ is compact and, by
\eqref{eq:Ndisjoint},
$$
A\subset p(X)\subset X\setminus N.
$$
Furthermore, \eqref{eq:pif} gives
$$
c\leq\sup_{u\in A}I(u)=\Pi(f)\leq c+\varepsilon.
$$
Thus the deformation result of Szulkin applies to $A$. In particular,
there exist $s_0>0$, a neighbourhood $W$ of $A$, and a continuous
family of maps
$$
\alpha_\tau:W\rightarrow X,
\qquad 0\leq\tau\leq s_0,
$$
with $\alpha_0=\operatorname{id}$, such that, for every
$u\in W$ and $0\leq\tau\leq s_0$,
\begin{equation}\label{eq:alpha_sminusu}
\|\alpha_\tau(u)-u\|\leq\tau,
\end{equation}
\begin{equation}\label{eq:IalphaminusIu}
I(\alpha_\tau(u))-I(u)\leq2\tau,
\end{equation}
while
\begin{equation}\label{eq:IalphaminusIu2}
I(\alpha_\tau(u))-I(u)\leq-2\varepsilon\tau
\end{equation}
whenever $I(u)\geq c-\varepsilon$. Moreover,
\begin{equation}\label{eq:IalphaminusIu3}
\sup_{u\in A}I(\alpha_\tau(u))-
\sup_{u\in A}I(u)
\leq-2\varepsilon\tau
\qquad\text{for }0<\tau\leq s_0.
\end{equation}
Note that the deformation $\alpha_\tau$ does not necessarily take values in $p(X)$. To remedy that, we define
$$
\beta_\tau:=p\circ\alpha_\tau.
$$
Since $A\subset p(X)=\operatorname{Fix}(p)$, for every $u\in A$,
\begin{equation}\label{eq:betauminusu}
\|\beta_\tau(u)-u\|
=\|p(\alpha_\tau(u))-p(u)\|
\leq\|\alpha_\tau(u)-u\|
\leq\tau,
\end{equation}
and
\begin{equation}\label{eq:Ibetau}
I(\beta_\tau(u))
=I(p(\alpha_\tau(u)))
\leq I(\alpha_\tau(u)).
\end{equation}
Combining \eqref{eq:Ibetau} with \eqref{eq:IalphaminusIu3}, we obtain
\begin{equation}\label{eq:Ibetausup}
\sup_{u\in A}I(\beta_\tau(u))-
\sup_{u\in A}I(u)
\leq-2\varepsilon\tau.
\end{equation}
Now, we choose $s\in(0,s_0]$ sufficiently small, in particular so that
$2s\leq\varepsilon/2$, and define
$$
f_s:=\beta_s\circ f.
$$
We claim that $f_s\in\Gamma_1^p$. Since $\beta_s$ takes values in
$p(X)$, it remains only to check the endpoints. Consider $t=0$; the
argument at $t=1$ is identical. If $I(f(0))\leq c-\varepsilon$, then,
for every $0\leq\tau\leq s$, \eqref{eq:IalphaminusIu} and
\eqref{eq:Ibetau} give
$$
I(\beta_\tau(f(0)))
\leq I(f(0))+2\tau
\leq c-\varepsilon/2.
$$
If instead $c-\varepsilon<I(f(0))\leq c-\varepsilon/2$, then
\eqref{eq:IalphaminusIu2} and \eqref{eq:Ibetau} yield
$$
I(\beta_\tau(f(0)))
\leq I(f(0))-2\varepsilon\tau
\leq c-\varepsilon/2.
$$
Thus in either case the continuous path
$$
\tau\longmapsto\beta_\tau(f(0)),
\qquad0\leq\tau\leq s,
$$
remains in $I^{c-\varepsilon/2}\subset I^{c-\varepsilon/4}$ and joins
$f(0)$ to $f_s(0)$. Hence
$$
f_s(0)\in W_0\cap I^{c-\varepsilon/2}.
$$
Similarly,
$$
f_s(1)\in W_e\cap I^{c-\varepsilon/2},
$$
so indeed
\begin{equation}\label{eq:fsGamma}
f_s\in\Gamma_1^p.
\end{equation}
Using \eqref{eq:diffPi}, \eqref{eq:fsGamma}, and
\eqref{eq:betauminusu}, we obtain
$$
\Pi(f_s)-\Pi(f)
\geq-\varepsilon d(f,f_s)
\geq-\varepsilon s.
$$
On the other hand, \eqref{eq:Ibetausup} gives
$$
\Pi(f_s)-\Pi(f)
\leq-2\varepsilon s.
$$
This is impossible because $\varepsilon,s>0$. Therefore
$$
K_c\cap p(X)\neq\varnothing.
$$
Hence there exists a critical point $u\in p(X)$ with $I(u)=c$.
\end{proof}

\end{document}